%%%%%%%%%%%%%%%%%%%%%%%%%%%%%%%%%%%%%%%%%
%%                                     %%
%%        BRANDT.TEX                   %%
%                                      %%
%%     Version of October 3, 2009      %%
%%                                     %%
%%     Last changes by Dusan           %%
%%                                     %%
%%%%%%%%%%%%%%%%%%%%%%%%%%%%%%%%%%%%%%%%%

\documentclass{amsart}
\usepackage{amsmath,amsthm,amsfonts,amssymb,mathrsfs}
\date{\today}

\usepackage{color}

\usepackage{hyperref}

% \righthyphenmin=2

% \theoremstyle{definition}
\newtheorem{theorem}{Theorem}[section]

\newtheorem{proposition}[theorem]{Proposition}
\newtheorem{corollary}[theorem]{Corollary}
\newtheorem{lemma}[theorem]{Lemma}
\theoremstyle{definition}
\newtheorem{example}[theorem]{Example}%[section]
%[section]
\newtheorem{remark}[theorem]{Remark}%[section]

\newtheorem{definition}[theorem]{Definition}%[section]

\begin{document}

\title[On the Brandt $\lambda^0$-extensions of
monoids with zero] {On the Brandt
$\lambda^0$-extensions of monoids with zero}

\author{Oleg ~Gutik}
\address{Department of Mathematics, Ivan Franko Lviv National University,
Universytetska 1, Lviv, 79000, Ukraine}
\email{o\underline{\hskip5pt}\,gutik@franko.lviv.ua,
ovgutik@yahoo.com}

\author{Du\v{s}an ~Repov\v{s}}
\address{Faculty of Mathematics and Physics,
and Faculty of Education, University of Ljubljana,
P.~O.~B. Jadranska 19, Ljubljana, 1000, Slovenia}
\email{dusan.repovs@guest.arnes.si}

\keywords{Semigroup, Brandt $\lambda^0$-extension, topological
semigroup, topological Brandt $\lambda^0$-extension, semigroup of
matrix units, inverse semigroup, Clifford inverse semigroup,
Brandt semigroup, semilattice, homomorphism, category of
semigroups}

\subjclass[2000]{Primary 20M15, 22A15, 22A26;
Secondary 20M50,
18B40.}

\begin{abstract}
We study algebraic properties of the Brandt $\lambda^0$-extensions of
monoids with zero and non-trivial homomorphisms between the Brandt
$\lambda^0$-extensions of monoids with zero. 
We introduce
finite, compact topological Brandt $\lambda^0$-extensions of
topological semigroups and countably compact topological Brandt
$\lambda^0$-extensions of topological inverse semigroups in the
class of topological inverse semigroups
and establish the structure
of such extensions and non-trivial continuous homomorphisms
between such topological Brandt $\lambda^0$-extensions of
topological  monoids with zero.
We
also
describe a category whose
objects are ingredients in the constructions of finite (compact,
countably compact) topological Brandt $\lambda^0$-extensions of
topological  monoids with zeros.
\end{abstract}

\maketitle

\section{Introduction and preliminaries}

We shall follow the terminology of \cite{BucurDeleanu1968,
CliffordPreston1961-1967, Petrich1984}. 
Given a semigroup $S,$ we shall
denote the set of idempotents of $S$ by $E(S)$.
A
semigroup $S$ with the adjoined unit [zero] will be denoted by
$S^1$ [$S^0$] (cf. 
\cite{CliffordPreston1961-1967}). 
Next, we shall denote the unit
(identity) and the zero of a semigroup $S$ by $1_S$ and $0_S$,
respectively. 
Given a subset $A$ of a semigroup $S$, we shall denote
by
$A^*=A\setminus\{ 0_S\}$ and $|A|=$ the cardinality of $A$. 
A
semigroup $S$ is 
called
\emph{regular} if for any $x\in S$ there
exists $y\in S$ such that $xyx=x$,
and it is called \emph{inverse}
if for any $x\in S$ there exists a unique $y\in S$ such that
$xyx=x$ and $yxy=y$.
Such an element $y$ is called \emph{inverse}
of $x$ and it
is denoted by $x^{-1}$. 
An inverse semigroup $S$ is
called \emph{Clifford} if $xx^{-1}=x^{-1}x$, for all $x\in S$. 
We
note that $xx^{-1}$ is an idempotent in $S$ for any $x\in S$, and
that for any Clifford inverse semigroup $S$, every idempotent is in
the center of $S$.

If $h\colon S\rightarrow T$ is a homomorphism (or a map) from a
semigroup $S$ into a semigroup $T$ and if $s\in S$,
then we denote
the image of $s$ under $h$ by $(s)h$. A semigroup homomorphism
$h\colon S\rightarrow T$ is called \emph{trivial} if $(s)h=(t)h$
for all $s, t\in S$. A~semigroup $S$ is called {\it
congruence-free} if it has only two congruences: 
the identical and
the universal~\cite{Petrich1984}. Obviously, a semigroup $S$ is
congruence-free if and only if every homomorphism $h$ of $S$ into
an arbitrary semigroup $T$ is an isomorphism ``into'' or is a
trivial homomorphism.

Let $S$ be a semigroup with zero and $I_\lambda$ 
a set of
cardinality $\lambda\geqslant 1$. We define the semigroup
operation on the set $B_{\lambda}(S)=(I_\lambda\times S\times
I_\lambda)\cup\{ 0\}$ as follows:
\begin{equation*}
 (\alpha,a,\beta)\cdot(\gamma, b, \delta)=
  \begin{cases}
    (\alpha, ab, \delta), & \text{ if } \beta=\gamma; \\
    0, & \text{ if } \beta\ne \gamma,
  \end{cases}
\end{equation*}
and $(\alpha, a, \beta)\cdot 0=0\cdot(\alpha, a, \beta)=0\cdot
0=0,$ for all $\alpha, \beta, \gamma, \delta\in I_\lambda$ and $a,
b\in S$. If $S=S^1$ then the semigroup $B_\lambda(S)$ is called
the {\it Brandt $\lambda$-extension of the semigroup}
$S$~\cite{Gutik1999, GutikPavlyk2001}. Obviously, if $S$ has zero
then ${\mathcal J}=\{ 0\}\cup\{(\alpha, 0_S, \beta)\mid 0_S$ is
the zero of $S\}$ is an ideal of $B_\lambda(S)$. We put
$B^0_\lambda(S)=B_\lambda(S)/{\mathcal J}$ and the semigroup
$B^0_\lambda(S)$ is called the {\it Brandt $\lambda^0$-extension
of the semigroup $S$ with zero}~\cite{GutikPavlyk2006}. 

Next, if
$A\subseteq S$ then we shall denote $A_{\alpha\beta}=\{(\alpha, s,
\beta)\mid s\in A \}$ if $A$ does not contain zero, and
$A_{\alpha,\beta}=\{(\alpha, s, \beta)\mid s\in A\setminus\{ 0\}
\}\cup \{ 0\}$ if $0\in A$, for $\alpha, \beta\in I_{\lambda}$. If
$\mathcal{I}$ is a trivial semigroup (i.e. $\mathcal{I}$ contains
only one element), then we denote the semigroup $\mathcal{I}$ with
the adjoined zero by ${\mathcal{I}}^0$. Obviously, for any
$\lambda\geqslant 2$, the Brandt $\lambda^0$-extension of the
semigroup ${\mathcal{I}}^0$ is isomorphic to the semigroup of
$I_\lambda\times I_\lambda$-matrix units and any Brandt
$\lambda^0$-extension of a semigroup with zero which also contains
a non-zero idempotent contains the semigroup of $I_\lambda\times
I_\lambda$-matrix units. 

We shall denote
  the
semigroup of $I_\lambda\times I_\lambda$-matrix units by $B_\lambda$ 
and 
the subsemigroup of $I_\lambda\times
I_\lambda$-matrix units of the Brandt $\lambda^0$-extension of a
monoid $S$ with zero by
$B^0_\lambda(1)$. We always consider the Brandt
$\lambda^0$-extension only of a monoid with zero. Obviously, for
any monoid $S$ with zero we have $B^0_1(S)=S$. Note that every
Brandt $\lambda$-extension of a group $G$ is isomorphic to the
Brandt $\lambda^0$-extension of the group $G^0$ with adjoined
zero. The Brandt $\lambda^0$-extension of the group with adjoined
zero is called a
\emph{Brandt
semigroup}~\cite{CliffordPreston1961-1967, Petrich1984}. A
semigroup $S$ is a Brandt semigroup if and only if $S$ is a
completely $0$-simple inverse semigroup~\cite{Clifford1942,
Munn1957} (cf.  also \cite[Theorem~II.3.5]{Petrich1984}). 
We also
observe that the semigroup $B_\lambda$ of $I_\lambda\times
I_\lambda$-matrix units is isomorphic to the Brandt
$\lambda^0$-extension of the two-element monoid with zero $S=\{
1_S, 0_S\}$ and the
trivial semigroup $S$ (i.~e. $S$ is a singleton
set) is isomorphic to the Brandt $\lambda^0$-extension of $S$ for
every cardinal $\lambda\geqslant 1$. We shall say that the Brandt
$\lambda^0$-extension $B_\lambda^0(S)$ of a semigroup $S$ is
\emph{finite} if the cardinal $\lambda$ is finite.

In this paper we establish homomorphisms of the Brandt
$\lambda^0$-extensions of monoids with zeros.
We
also
describe a
category whose objects are ingredients in the constructions of
the Brandt $\lambda^0$-extensions of monoids with zeros. We introduce
finite, compact topological Brandt $\lambda^0$-extensions of
topological semigroups and countably compact topological Brandt
$\lambda^0$-extensions of topological inverse semigroups in the
class of topological inverse semigroups,
and establish the structure
of such extensions and non-trivial continuous homomorphisms
between such topological Brandt $\lambda^0$-extensions of
topological  monoids with zero.
We
also
describe a category whose
objects are ingredients in the constructions of finite (compact,
countably compact) topological Brandt $\lambda^0$-extensions of
topological  monoids with zeros.

%%%%%%%%%%%%%%%%%%%%%%%%%%%%%%%%%%%%%%%%%%%%%%%%%%%%%

\section{Some properties of the Brandt $\lambda^0$-extensions of
semigroups}

Gutik and Pavlyk \cite{GutikPavlyk2006}  proved
that for every
cardinal $\lambda\geqslant 1,$ the Brandt $\lambda^0$-extension of
a semigroup $S$ is a regular, orthodox, inverse, $0$-simple or
completely $0$-simple semigroup if and only if such
is also $S$. They also
proved that for every cardinal $\lambda\geqslant 1$, the
Brandt $\lambda^0$-extension of a semigroup $S$ with zero is a
congruence-free semigroup if and only if such is also $S$.
The definition of the semigroup operation on the Brandt
$\lambda^0$-extension of a semigroup implies the following:

\begin{proposition}\label{proposition1-1}
Let $\lambda\geqslant 1$ be any cardinal. Then:
\begin{itemize}
    \item[$(i)$] If $T$ is a subsemigroup of a semigroup $S$ then
            $B_{\lambda}^0(T)$ is a subsemigroup of
            $B_{\lambda}^0(S)$; and
    \item[$(ii)$] If $T$ is a left (resp., right, two-sided) ideal
            of a semigroup $S$ then $B_{\lambda}^0(T)$ is a left
            (resp., right, two-sided) ideal in $B_{\lambda}^0(S)$.
\end{itemize}
\end{proposition}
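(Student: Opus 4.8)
The plan is to verify both claims directly from the definition of the multiplication on $B_\lambda^0(S)$, since the semigroup operation on the extension is built coordinatewise from the operation on $S$ together with the matrix-unit bookkeeping in the index set $I_\lambda$.

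For part $(i)$, first I would observe that as a set $B_\lambda^0(T) = (I_\lambda\times T^*\times I_\lambda)\cup\{0\}$ sits inside $B_\lambda^0(S) = (I_\lambda\times S^*\times I_\lambda)\cup\{0\}$, where we identify the zeros (both come from collapsing the ideal $\mathcal J$). Here one must be slightly careful: $T$ is a subsemigroup of $S$, and to form $B_\lambda^0(T)$ we need $T$ to have a zero; the natural reading, consistent with the paper's convention that one always takes Brandt $\lambda^0$-extensions of monoids with zero, is that $T$ contains $0_S$ and $0_T = 0_S$, so $T^* \subseteq S^*$. Then I would take two arbitrary elements of $B_\lambda^0(T)$ and compute their product in $B_\lambda^0(S)$: if either factor is $0$ the product is $0 \in B_\lambda^0(T)$; otherwise the factors are $(\alpha,a,\beta)$ and $(\gamma,b,\delta)$ with $a,b\in T^*$, and the product is either $0$ (when $\beta\neq\gamma$), which lies in $B_\lambda^0(T)$, or $(\alpha,ab,\delta)$ (when $\beta=\gamma$); since $T$ is closed under the operation of $S$, $ab\in T$, and this is an element of $B_\lambda^0(T)$ regardless of whether $ab = 0_T$ or not (in the former case it is the zero of $B_\lambda^0(T)$). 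Hence $B_\lambda^0(T)$ is closed under the operation inherited from $B_\lambda^0(S)$, i.e. it is a subsemigroup.

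For part $(ii)$, I would treat the left-ideal case (right and two-sided being symmetric) and show $B_\lambda^0(S)\cdot B_\lambda^0(T) \subseteq B_\lambda^0(T)$. Again if either factor is the zero the product is $0\in B_\lambda^0(T)$; otherwise take $(\alpha,s,\beta)\in B_\lambda^0(S)$ with $s\in S^*$ and $(\gamma,t,\delta)\in B_\lambda^0(T)$ with $t\in T^*$, and note the product is $0$ when $\beta\neq\gamma$ and $(\alpha,st,\delta)$ when $\beta=\gamma$; since $T$ is a left ideal of $S$, $st\in T$, so in either case the product lies in $B_\lambda^0(T)$. The same computation with the roles of the factors reversed handles right ideals, and combining the two handles two-sided ideals. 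One should also note the trivial point that $B_\lambda^0(T)$ is nonempty (it contains $0$), so it is a genuine ideal.

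There is no real obstacle here; the statement is essentially a bookkeeping exercise, and the only thing worth flagging is the convention issue in part $(i)$ about what ``$T$ a subsemigroup'' means for the purpose of forming $B_\lambda^0(T)$ — namely that $T$ must carry a zero compatible with that of $S$, which is automatic in all the cases (monoids with zero, ideals) that the paper actually applies the proposition to. Since the paper introduces this as an immediate consequence of the definition of the operation, I would keep the proof to a single short paragraph that records the two product computations above and remarks that the ideal cases follow by the same computation read one-sidedly.
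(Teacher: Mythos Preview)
Your proposal is correct and matches the paper's treatment: the paper states this proposition as an immediate consequence of the definition of the semigroup operation on $B_\lambda^0(S)$ and gives no further proof, and your direct case-by-case verification from that definition is exactly the intended argument. Your flag about the convention that $T$ must contain the zero of $S$ (so that $0_T=0_S$ and $T^*\subseteq S^*$) is the only subtlety, and you have handled it appropriately.
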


\begin{proposition}\label{proposition1-3}
Let $\lambda_1,\lambda_2\geqslant 1$ be any cardinals, $S$ a
semigroup and $S_1$ the Brandt $\lambda^0_1$-extension of the
semigroup $S$. Then the Brandt $\lambda^0_2$-extension of
the
semigroup $S_1$ is isomorphic to
the
Brandt $\lambda^0$-extension of
the
semigroup $S$ for the cardinal $\lambda=\lambda_1\cdot\lambda_2$.
\end{proposition}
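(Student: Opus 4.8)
The plan is to exhibit an explicit isomorphism. First I would fix a set $I_{\lambda_1}$ of cardinality $\lambda_1$ and a set $I_{\lambda_2}$ of cardinality $\lambda_2$, and observe that since $\lambda=\lambda_1\cdot\lambda_2$ we may take $I_\lambda=I_{\lambda_1}\times I_{\lambda_2}$, because the Brandt $\lambda^0$-extension of a fixed semigroup depends, up to isomorphism, only on the cardinality of the index set. With this choice, a non-zero element of $B^0_{\lambda_2}(S_1)=B^0_{\lambda_2}\bigl(B^0_{\lambda_1}(S)\bigr)$ has the form $(\gamma,(\alpha,s,\beta),\delta)$ with $\gamma,\delta\in I_{\lambda_2}$, $\alpha,\beta\in I_{\lambda_1}$ and $s\in S^*$ (triples whose middle coordinate is the zero $0_{S_1}$ are identified with $0$, exactly as triples with middle coordinate $0_S$ are), while a non-zero element of $B^0_\lambda(S)$ has the form $((\alpha,\gamma),s,(\beta,\delta))$ with the same data.

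Next I would define $\varphi\colon B^0_{\lambda_2}(S_1)\to B^0_\lambda(S)$ by $(0)\varphi=0$ and $\bigl((\gamma,(\alpha,s,\beta),\delta)\bigr)\varphi=((\alpha,\gamma),s,(\beta,\delta))$. It is immediate that $\varphi$ is a bijection, with inverse $((\alpha,\gamma),s,(\beta,\delta))\mapsto(\gamma,(\alpha,s,\beta),\delta)$ and $0\mapsto 0$.

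Then I would check that $\varphi$ is a homomorphism by comparing the two multiplication rules on non-zero elements. Unwinding the definition of the Brandt $\lambda^0$-extension twice, the product $(\gamma_1,(\alpha_1,s_1,\beta_1),\delta_1)\cdot(\gamma_2,(\alpha_2,s_2,\beta_2),\delta_2)$ in $B^0_{\lambda_2}(S_1)$ is non-zero if and only if $\delta_1=\gamma_2$, $\beta_1=\alpha_2$ and $s_1s_2\neq 0_S$, in which case it equals $(\gamma_1,(\alpha_1,s_1s_2,\beta_2),\delta_2)$. On the other side, $((\alpha_1,\gamma_1),s_1,(\beta_1,\delta_1))\cdot((\alpha_2,\gamma_2),s_2,(\beta_2,\delta_2))$ in $B^0_\lambda(S)$ is non-zero if and only if $(\beta_1,\delta_1)=(\alpha_2,\gamma_2)$ --- that is, $\beta_1=\alpha_2$ and $\delta_1=\gamma_2$ --- and $s_1s_2\neq 0_S$, in which case it equals $((\alpha_1,\gamma_1),s_1s_2,(\beta_2,\delta_2))$. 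The two sets of conditions coincide and the two results correspond under $\varphi$; since $\varphi$ maps $0$ to $0$ and non-zero elements to non-zero elements, it also respects those products one (hence both) of whose sides is $0$. Therefore $\varphi$ is an isomorphism, which gives the claim.

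The only real subtlety --- a point to state carefully rather than a genuine obstacle --- is the identification $I_\lambda=I_{\lambda_1}\times I_{\lambda_2}$ together with the bookkeeping that keeps the ``inner'' index of $S_1$ paired with the first coordinate and the ``outer'' index with the second, so that the single matching condition ``$\beta=\gamma$'' defining multiplication in $B^0_\lambda(S)$ splits precisely into the two successive matching conditions coming from the two nested Brandt extensions.
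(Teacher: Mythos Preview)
Your proof is correct and follows essentially the same approach as the paper: both construct an explicit bijection by identifying $I_\lambda$ with $I_{\lambda_1}\times I_{\lambda_2}$ and pairing the inner and outer indices, then check that it is a homomorphism. Your version is in fact more detailed than the paper's (which simply asserts that the map ``is bijective and it preserves the semigroup operation''), and the only cosmetic difference is the ordering of coordinates in the index pair, which is immaterial.
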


\begin{proof}
Let $I_{\lambda_1}$ and $I_{\lambda_2}$ be the sets of cardinality
$\lambda_1$ and $\lambda_2$, respectively. We put
$I_\lambda=I_{\lambda_1}\times I_{\lambda_2}$. Then the set
$I_\lambda$ has the cardinality $\lambda=\lambda_1\cdot\lambda_2$.
We define a map $h\colon
B_{\lambda_2}^0(S_1)=B_{\lambda_2}^0(B_{\lambda_1}^0(S))\rightarrow
B_{\lambda}^0(S)$ as follows
\begin{equation*}
    (\alpha_2,(\alpha_1,s,\beta_1),\beta_2)h=
    ((\alpha_2,\alpha_1),s,(\beta_1,\beta_2)) \qquad\qquad
    \mbox{and} \qquad\qquad (0_2)h=0,
\end{equation*}
for $s\in S$, $\alpha_1,\beta_1\in I_{\lambda_1}$,
$\alpha_2,\beta_2\in I_{\lambda_2}$, and zeros $0_2$ and $0$ of
semigroups $B_{\lambda_2}^0(B_{\lambda_1}^0(S))$ and
$B_{\lambda}^0(S)$, respectively. Obviously, the map $h\colon
B_{\lambda_2}^0(B_{\lambda_1}^0(S))\rightarrow B_{\lambda}^0(S)$
is bijective and it preserves the semigroup operation, hence $h$
is an isomorphism.
\end{proof}

The cardinal arithmetics and Proposition~\ref{proposition1-3}
imply the following  corollaries:

\begin{corollary}\label{corollary1-4}
Let $S$ be a semigroup and $\lambda_1$, $\lambda_2$  any
infinite cardinals. Then
$B_{\lambda_2}^0(B_{\lambda_1}^0(S))=B_{\lambda}^0(S)$, where
$\lambda=\sup\{\lambda_1,\lambda_2\}$.
\end{corollary}

\begin{corollary}\label{corollary1-5}
$B_{\lambda}^0(B_{\lambda}^0(S))=B_{\lambda}^0(S)$ for every
infinite cardinal $\lambda$ and any semigroup $S$.
\end{corollary}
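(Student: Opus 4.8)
The plan is to obtain this as the diagonal special case of Proposition~\ref{proposition1-3}. Setting $\lambda_1=\lambda_2=\lambda$ in that proposition yields an isomorphism $B_{\lambda}^0(B_{\lambda}^0(S))\cong B_{\lambda\cdot\lambda}^0(S)$, the map $h$ being built on the index side from the canonical bijection $I_\lambda\times I_\lambda\to I_{\lambda\cdot\lambda}$. The only genuine input beyond that proposition is the cardinal-arithmetic identity $\lambda\cdot\lambda=\lambda$, valid for every infinite cardinal $\lambda$; this is where the Axiom of Choice enters, through the Zermelo--Hessenberg theorem. Combining the two gives $B_{\lambda}^0(B_{\lambda}^0(S))\cong B_{\lambda}^0(S)$. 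Equivalently, one may simply read off the statement as the case $\lambda_1=\lambda_2=\lambda$ of Corollary~\ref{corollary1-4}, since $\sup\{\lambda,\lambda\}=\lambda$.

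To justify writing an honest equality sign rather than merely ``isomorphic to'', I would fix once and for all a bijection $\varphi\colon I_\lambda\times I_\lambda\to I_\lambda$ and use it to identify the index set of $B_{\lambda}^0(B_{\lambda}^0(S))$ — which after unravelling the double extension is naturally $I_\lambda\times I_\lambda$, with a typical non-zero element of the form $(\alpha_2,(\alpha_1,s,\beta_1),\beta_2)$ — with $I_\lambda$ itself. Under this identification the isomorphism $h$ of Proposition~\ref{proposition1-3} is the identity on underlying sets, so the two semigroups literally coincide; the bijection $\varphi$ is precisely the choice that the infinite hypothesis on $\lambda$ makes available.

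There is no real obstacle: the content is entirely carried by Proposition~\ref{proposition1-3} together with the elementary fact $\lambda\cdot\lambda=\lambda$ for infinite $\lambda$. The only point worth a word of care is that the conclusion genuinely fails for finite $\lambda\geqslant 2$, where $B_{\lambda}^0(B_{\lambda}^0(S))\cong B_{\lambda^2}^0(S)$ is in general not isomorphic to $B_{\lambda}^0(S)$; this is why the infiniteness hypothesis cannot be dropped and why the corollary is recorded separately from the manipulations valid for arbitrary cardinals.
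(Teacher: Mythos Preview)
Your proposal is correct and follows exactly the paper's approach: the paper states that ``the cardinal arithmetics and Proposition~\ref{proposition1-3} imply'' this corollary, which is precisely your argument via $\lambda\cdot\lambda=\lambda$ for infinite $\lambda$. Your additional remarks on the equality sign and the necessity of the infiniteness hypothesis are accurate elaborations beyond what the paper records.
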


\begin{corollary}\label{corollary1-6}
Let $\lambda,\nu\geqslant 1$ be any cardinals. Then the Brandt
$\lambda^0$-extension of the semigroup of $I_\nu\times
I_\nu$-matrix units is the semigroup of $I_{\lambda\cdot\nu}\times
I_{\lambda\cdot\nu}$-matrix units.
\end{corollary}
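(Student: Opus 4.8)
The plan is to reduce this to Proposition~\ref{proposition1-3} by observing that the semigroup of matrix units is itself a Brandt $\lambda^0$-extension of a two-element monoid. Recall from the introduction that the semigroup $B_\nu$ of $I_\nu\times I_\nu$-matrix units is isomorphic to the Brandt $\nu^0$-extension $B_\nu^0(S)$ of the two-element monoid with zero $S=\{1_S,0_S\}$. Consequently the Brandt $\lambda^0$-extension of $B_\nu$ is isomorphic to $B_\lambda^0\bigl(B_\nu^0(S)\bigr)$, and Proposition~\ref{proposition1-3}, applied with $\lambda_1=\nu$ and $\lambda_2=\lambda$, yields an isomorphism $B_\lambda^0\bigl(B_\nu^0(S)\bigr)\cong B_{\nu\cdot\lambda}^0(S)$. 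Since cardinal multiplication is commutative we have $\nu\cdot\lambda=\lambda\cdot\nu$, and invoking once more the identification of the Brandt $\mu^0$-extension of $S=\{1_S,0_S\}$ with the semigroup $B_\mu$ of $I_\mu\times I_\mu$-matrix units (this time with $\mu=\lambda\cdot\nu$) we conclude that the Brandt $\lambda^0$-extension of $B_\nu$ is isomorphic to $B_{\lambda\cdot\nu}$, as asserted.

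Alternatively, one may argue directly, which is perhaps more transparent. Identify $I_{\lambda\cdot\nu}$ with $I_\lambda\times I_\nu$ and, writing $e_{\gamma\delta}$ for the matrix units of $B_\nu$ (so $e_{\gamma\delta}e_{\gamma'\delta'}=e_{\gamma\delta'}$ if $\delta=\gamma'$ and $0$ otherwise), define $h\colon B_\lambda^0(B_\nu)\to B_{\lambda\cdot\nu}$ by $(\alpha,e_{\gamma\delta},\beta)h=e_{(\alpha,\gamma),(\beta,\delta)}$ and $0h=0$. Here one uses that in forming $B_\lambda^0(B_\nu)$ the elements $(\alpha,0_{B_\nu},\beta)$ are collapsed to the zero, so $h$ is well defined; it is manifestly a bijection. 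The verification that $h$ preserves the operation is routine: the product $(\alpha,e_{\gamma\delta},\beta)\cdot(\alpha',e_{\gamma'\delta'},\beta')$ is nonzero precisely when $\beta=\alpha'$ and $\delta=\gamma'$, in which case it equals $(\alpha,e_{\gamma\delta'},\beta')$, and this matches exactly the condition $(\beta,\delta)=(\alpha',\gamma')$ for the corresponding product of matrix units in $B_{\lambda\cdot\nu}$.

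I expect no genuine obstacle here, since both routes are short; the only points needing a moment's care are the same bookkeeping issues already handled in Proposition~\ref{proposition1-3}, namely the consistent use of the identification $I_{\lambda\cdot\nu}=I_\lambda\times I_\nu$ and the treatment of the zero of $B_\nu$, which disappears upon passing to the $\lambda^0$-extension.
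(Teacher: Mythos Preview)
Your proposal is correct and follows the paper's approach: the paper states that Corollary~\ref{corollary1-6} follows from Proposition~\ref{proposition1-3} together with cardinal arithmetic, and your first argument does exactly this, via the identification (noted in the introduction) of $B_\nu$ with $B_\nu^0(\{1_S,0_S\})$. Your alternative direct argument is also fine, but it is just the proof of Proposition~\ref{proposition1-3} specialized to $S=\{1_S,0_S\}$, so it adds nothing new.
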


\begin{corollary}\label{corollary1-7}
Let $\lambda$ be any infinite cardinal. Then the Brandt
$\lambda^0$-extension of the semigroup of $I_\lambda\times
I_\lambda$-matrix units is the semigroup of $I_\lambda\times
I_\lambda$-matrix units.
\end{corollary}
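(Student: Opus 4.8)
The plan is to derive this immediately from Corollary~\ref{corollary1-6}. Setting $\nu=\lambda$ there, the Brandt $\lambda^0$-extension of the semigroup of $I_\lambda\times I_\lambda$-matrix units is the semigroup of $I_{\lambda\cdot\lambda}\times I_{\lambda\cdot\lambda}$-matrix units. Since $\lambda$ is infinite, the standard fact of cardinal arithmetic $\lambda\cdot\lambda=\lambda$ (a consequence of the Axiom of Choice) yields a bijection $I_{\lambda\cdot\lambda}\cong I_\lambda$, and hence $B_\lambda^0(B_\lambda)=B_{\lambda\cdot\lambda}=B_\lambda$, as claimed. In short, Corollary~\ref{corollary1-7} is just the ``diagonal'' instance $\nu=\lambda$ of Corollary~\ref{corollary1-6}, specialized to infinite cardinals where the product stabilizes.

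Alternatively, one can route the argument through Corollary~\ref{corollary1-5}. By the remark in the introduction, the semigroup $B_\lambda$ of $I_\lambda\times I_\lambda$-matrix units is (isomorphic to) the Brandt $\lambda^0$-extension of the two-element monoid with zero $S=\{1_S,0_S\}$, that is, $B_\lambda=B_\lambda^0(S)$. Then
\begin{equation*}
    B_\lambda^0(B_\lambda)=B_\lambda^0\bigl(B_\lambda^0(S)\bigr)=B_\lambda^0(S)=B_\lambda,
\end{equation*}
where the middle equality is exactly Corollary~\ref{corollary1-5}.

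Since both routes reduce to results already established earlier in this section, there is no genuine obstacle here: the only external input is the cardinal identity $\lambda\cdot\lambda=\lambda$ for infinite $\lambda$, which is already being invoked (implicitly) in Corollary~\ref{corollary1-4} and Corollary~\ref{corollary1-5}. I would therefore present the proof as a one-line appeal to Corollary~\ref{corollary1-6}, perhaps with the alternative derivation via Corollary~\ref{corollary1-5} mentioned parenthetically.
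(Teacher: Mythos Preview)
Your proposal is correct and matches the paper's approach exactly: the paper presents Corollary~\ref{corollary1-7} (together with Corollaries~\ref{corollary1-4}--\ref{corollary1-8}) as an immediate consequence of Proposition~\ref{proposition1-3} and cardinal arithmetic, with no separate proof given. Your derivation via Corollary~\ref{corollary1-6} with $\nu=\lambda$ and $\lambda\cdot\lambda=\lambda$ is precisely the intended one-line argument.
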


\begin{corollary}\label{corollary1-8}
Let $\lambda\geqslant 1$ be any cardinal. Then the Brandt
$\lambda^0$-extension of a Brandt semigroup is a Brandt semigroup.
\end{corollary}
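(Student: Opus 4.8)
The plan is to reduce Corollary~\ref{corollary1-8} to Proposition~\ref{proposition1-3} together with the definitions recalled in the introduction. First I would recall that, by the definition given above, a Brandt semigroup is exactly the Brandt $\nu^0$-extension $B_\nu^0(G^0)$ of a group $G$ with an adjoined zero, for some cardinal $\nu\geqslant 1$. So let $S=B_\nu^0(G^0)$ be an arbitrary Brandt semigroup, and let $\lambda\geqslant 1$ be any cardinal; the goal is to show that $B_\lambda^0(S)$ is again a Brandt semigroup, i.e.\ of the form $B_\mu^0(H^0)$ for some group $H$ and some cardinal $\mu$.

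The key step is to apply Proposition~\ref{proposition1-3} with $\lambda_1=\nu$, $\lambda_2=\lambda$, and the monoid-with-zero $G^0$ in the role of $S$: it yields
\begin{equation*}
    B_\lambda^0(S)=B_\lambda^0\bigl(B_\nu^0(G^0)\bigr)\cong B_{\lambda\cdot\nu}^0(G^0).
\end{equation*}
Since $\lambda\cdot\nu$ is again a cardinal $\geqslant 1$, the right-hand side is by definition the Brandt $(\lambda\cdot\nu)^0$-extension of $G^0$, hence a Brandt semigroup. This completes the argument. I would also remark on the degenerate cases for completeness: when $G$ is trivial we are dealing with a semigroup of matrix units (this is the content of Corollary~\ref{corollary1-6}), and when $\nu=1$ the Brandt semigroup $S$ is just $G^0$ itself, so that $B_\lambda^0(S)=B_\lambda^0(G^0)$ directly; both are consistent with the general formula.

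There is essentially no obstacle here — the statement is a one-line consequence of Proposition~\ref{proposition1-3} once one unwinds the definition of a Brandt semigroup. The only point requiring a modicum of care is making sure the quantifiers match: Proposition~\ref{proposition1-3} is stated for an arbitrary semigroup $S$ (here played by $G^0$, a monoid with zero, which is legitimate since we always take Brandt $\lambda^0$-extensions of monoids with zero), and the cardinal $\lambda$ in the corollary may be arbitrary, including $1$, in which case $B_1^0(S)=S$ as noted in the introduction and the conclusion is trivial. Thus the proof is: invoke the definition of Brandt semigroup, invoke Proposition~\ref{proposition1-3}, and read off that $\lambda\cdot\nu$ is the new index cardinal.
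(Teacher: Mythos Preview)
Your proof is correct and follows exactly the route the paper takes: Corollary~\ref{corollary1-8} is listed among the consequences of Proposition~\ref{proposition1-3} and cardinal arithmetic, and your unwinding of the definition of a Brandt semigroup as $B_\nu^0(G^0)$ followed by the isomorphism $B_\lambda^0(B_\nu^0(G^0))\cong B_{\lambda\cdot\nu}^0(G^0)$ is precisely that argument.
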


Let $S$ be a semigroup with zero $0_S$ and $\{
S_\alpha\}_{\alpha\in\mathscr{A}}$ a family of subsemigroups of
$S$ such that $S=\bigcup_{\alpha\in\mathscr{A}}S_\alpha$ and
$S_\alpha\cap S_\beta=S_\alpha\cdot S_\beta=0_S$ for all distinct
$\alpha,\beta\in\mathscr{A}$. Then the semigroup $S$ is called an
\emph{orthogonal sum} of the semigroups $\{
S_\alpha\}_{\alpha\in\mathscr{A}}$ and it is denoted by
$\sum_{\alpha\in\mathscr{A}}S_\alpha$
(cf. \cite{Petrich1984}).

\begin{proposition}\label{proposition1-9}
Let $\lambda\geqslant 1$ be any cardinal. Let a semigroup $S$ be
an orthogonal sum of a family of semigroups $\{
S_\alpha\}_{\alpha\in\mathscr{A}}$ with zeros. Then the Brandt
$\lambda^0$-extension $B_{\lambda}^0(S)$ of the semigroup $S$ is
isomorphic to the orthogonal sum
$\sum_{\alpha\in\mathscr{A}}B_\lambda^0(S_\alpha)$.
\end{proposition}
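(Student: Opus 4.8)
The plan is to build an explicit isomorphism $\Phi\colon B_\lambda^0(S)\to\sum_{\alpha\in\mathscr A}B_\lambda^0(S_\alpha)$ and check that it respects the semigroup operation. First I would fix a set $I_\lambda$ of cardinality $\lambda$ and unwind the two sides: an element of $B_\lambda^0(S)$ is either $0$ or a triple $(\gamma,s,\delta)$ with $s\in S^*$, and since $S=\bigcup_{\alpha}S_\alpha$ with the $S_\alpha$ overlapping only in $0_S$, each such nonzero $s$ lies in exactly one $S_\alpha$; write $\alpha(s)$ for that index. On the other side, the orthogonal sum $\sum_\alpha B_\lambda^0(S_\alpha)$ is the union of the subsemigroups $B_\lambda^0(S_\alpha)$ amalgamated at a common zero, so its nonzero elements are exactly the triples $(\gamma,s,\delta)$ with $s\in S_\alpha^*$ for some (unique) $\alpha$. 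I would then define $\Phi(0)=0$ and $\Phi((\gamma,s,\delta))=(\gamma,s,\delta)$ viewed inside the summand $B_\lambda^0(S_{\alpha(s)})$; bijectivity is then immediate from the preceding description of both element sets.

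The verification that $\Phi$ is a homomorphism splits into cases. If either factor is $0$, both products are $0$ and there is nothing to prove. Otherwise take $(\gamma,s,\delta)$ and $(\mu,t,\nu)$ with $s\in S_\alpha^*$, $t\in S_\beta^*$. If $\delta\ne\mu$ the product in $B_\lambda^0(S)$ is $0$, and in the orthogonal sum the product of the images is also $0$ because the matrix-unit indices fail to match (regardless of whether $\alpha=\beta$). If $\delta=\mu$, the product in $B_\lambda^0(S)$ is $(\gamma,st,\nu)$ or $0$ according to whether $st\ne 0_S$ or $st=0_S$. Here the orthogonal-sum hypothesis $S_\alpha\cdot S_\beta=\{0_S\}$ for $\alpha\ne\beta$ does the work: if $\alpha\ne\beta$ then $st=0_S$, so the left side is $0$, and on the right $(\gamma,s,\delta)\in B_\lambda^0(S_\alpha)$ and $(\mu,t,\nu)\in B_\lambda^0(S_\beta)$ lie in distinct summands, whose product is the common zero by definition of orthogonal sum — so both sides agree. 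If $\alpha=\beta$ then $st\in S_\alpha$ and the multiplication in $B_\lambda^0(S)$ restricted to triples with entries in $S_\alpha$ is literally the multiplication of $B_\lambda^0(S_\alpha)$ (this is just Proposition~\ref{proposition1-1}$(i)$ applied to $S_\alpha\le S$, together with the observation that the zero $0_S$ of $S$ serves as the zero $0_{S_\alpha}$ of $S_\alpha$), so the two products coincide, including the degenerate case $st=0_S$ where both collapse to $0$.

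The only genuine point requiring care is the bookkeeping around zeros: one must confirm that for each $\alpha$ the element $0_S$ really is the zero of the subsemigroup $S_\alpha$ (so that $B_\lambda^0(S_\alpha)$ is formed by collapsing the ideal $\mathcal J_\alpha=\{0\}\cup\{(\gamma,0_S,\delta)\}$ exactly as in the definition), and that the amalgamating zero of the orthogonal sum $\sum_\alpha B_\lambda^0(S_\alpha)$ is identified with $0\in B_\lambda^0(S)$. Since $S_\alpha\cap S_\beta=\{0_S\}$ for $\alpha\ne\beta$ forces $0_S\in S_\alpha$ for every $\alpha$ (as soon as $\mathscr A$ has at least two elements; the case $|\mathscr A|=1$ is trivial), and $S_\alpha S_\alpha\ni 0_S s=0_S$ shows $0_S$ absorbs in $S_\alpha$, this is routine. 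I expect no real obstacle; the content is entirely in organizing the case analysis so that the two defining conditions of an orthogonal sum, namely $S_\alpha\cap S_\beta=\{0_S\}$ and $S_\alpha S_\beta=\{0_S\}$, are each invoked at the right place, and in noting that the Brandt construction is ``functorial'' enough that it turns the index partition of $S^*$ into the index partition of $B_\lambda^0(S)^*$ while multiplying the matrix-unit coordinates through unchanged.
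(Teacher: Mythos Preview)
Your proof is correct and follows essentially the same approach as the paper's: both verify that the $B_\lambda^0(S_\alpha)$ partition $B_\lambda^0(S)^*$, are subsemigroups via Proposition~\ref{proposition1-1}$(i)$, and annihilate one another. The only cosmetic difference is that the paper works internally---it checks directly that $B_\lambda^0(S)$ satisfies the defining conditions of an orthogonal sum with summands $B_\lambda^0(S_\alpha)$---whereas you build an explicit bijection to an external orthogonal sum; your map is effectively the identity once one identifies the two constructions.
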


\begin{proof}
Obviously,
$B_{\lambda}^0(S)=\bigcup_{\alpha\in\mathscr{A}}B_{\lambda}^0(S_\alpha)$
and $B_{\lambda}^0(S_\alpha)\cap B_{\lambda}^0(S_\beta)=\{ 0\}$
for all distinct $\alpha,\beta\in\mathscr{A}$, where $0$ is the
zero
of the Brandt $\lambda^0$-extension $B_{\lambda}^0(S)$ of the
semigroup $S$. Proposition~\ref{proposition1-1} implies that
$B_{\lambda}^0(S_\alpha)$ is a subsemigroup of $B_{\lambda}^0(S),$
for all $\alpha\in\mathscr{A}$. The semigroup operation in
$B_{\lambda}^0(S)$ implies that for every distinct
$\alpha,\beta\in\mathscr{A}$ and for any non-zero elements
$(\gamma,s_\alpha,\delta)\in B_{\lambda}^0(S_\alpha)$ and
$(\mu,t_\beta,\nu)\in B_{\lambda}^0(S_\beta)$ we have
$(\gamma,s_\alpha,\delta)\cdot(\mu,t_\beta,\nu)=0$.
This completes the proof of the
proposition.
\end{proof}

The semigroup operation on a semigroup $S$ with
$E(S)\neq\varnothing$ induces the \emph{natural partial order}
$\leqslant$ on $E(S)$: $e\leqslant f$ if and only if $ef=fe=e$,
for $e,f\in E(S)$. If $E(S)$ has a
zero then an idempotent
$e\in(E(S))^*$ is called \emph{primitive} if it is minimal in
$(E(S))^*$ (cf. \cite{Petrich1984}).

An inverse semigroup $S$ with zero is called \emph{primitive
inverse}, if every non-zero idempotent of $S$ is
primitive~\cite{Petrich1984}. Since every primitive inverse
semigroup is an orthogonal sum of Brandt semigroups (cf. 
Theorem~II.4.3 of
\cite{Petrich1984}),
Proposition~\ref{proposition1-9} and Corollary~\ref{corollary1-8}
imply:

\begin{corollary}\label{corollary1-10}
Let $\lambda\geqslant 1$ be any cardinal. Then the Brandt
$\lambda^0$-extension of a semigroup $S$ is a primitive inverse
semigroup if and only if such is also $S$.
\end{corollary}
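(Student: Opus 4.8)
The plan is to reduce the statement to the two facts already quoted in the excerpt: (a) Theorem~II.4.3 of \cite{Petrich1984}, which says that a semigroup is primitive inverse if and only if it is an orthogonal sum of Brandt semigroups; (b) Proposition~\ref{proposition1-9}, which says that $B_\lambda^0$ commutes with orthogonal sums; and (c) Corollary~\ref{corollary1-8}, which says that $B_\lambda^0$ sends Brandt semigroups to Brandt semigroups. The argument splits into the two implications.

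For the ``only if'' direction, suppose $S$ is primitive inverse. By (a) we may write $S=\sum_{\alpha\in\mathscr{A}}S_\alpha$ with each $S_\alpha$ a Brandt semigroup. Then Proposition~\ref{proposition1-9} gives $B_\lambda^0(S)\cong\sum_{\alpha\in\mathscr{A}}B_\lambda^0(S_\alpha)$, and Corollary~\ref{corollary1-8} tells us each summand $B_\lambda^0(S_\alpha)$ is again a Brandt semigroup; so $B_\lambda^0(S)$ is an orthogonal sum of Brandt semigroups, hence primitive inverse by (a) again. (One should check the degenerate index cases — if $\mathscr{A}$ is empty then $S=\{0_S\}$ and $B_\lambda^0(S)$ is trivial with zero, which is vacuously primitive inverse; this is handled by the convention in the excerpt that the trivial semigroup is its own Brandt $\lambda^0$-extension.)

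For the converse, suppose $B_\lambda^0(S)$ is primitive inverse. First, $B_\lambda^0(S)$ is then inverse, so by the Gutik--Pavlyk result cited at the start of Section~2, $S$ is inverse. It remains to see that every non-zero idempotent of $S$ is primitive. The idempotents of $B_\lambda^0(S)$ are exactly $0$ and the triples $(\alpha,e,\alpha)$ with $e\in E(S)$, $e\neq 0_S$ (this follows directly from the multiplication rule), and a short computation with the natural partial order shows $(\alpha,e,\alpha)\leqslant(\alpha,f,\alpha)$ in $E(B_\lambda^0(S))$ if and only if $e\leqslant f$ in $E(S)$; moreover $(\alpha,e,\alpha)$ and $(\beta,f,\beta)$ are incomparable when $\alpha\neq\beta$. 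Hence $(\alpha,e,\alpha)$ is primitive in $E(B_\lambda^0(S))^*$ precisely when $e$ is primitive in $E(S)^*$. Since $B_\lambda^0(S)$ is primitive inverse, every such $(\alpha,e,\alpha)$ is primitive, so every non-zero $e\in E(S)$ is primitive, i.e.\ $S$ is primitive inverse.

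The only mildly delicate point is the verification that the embedding $E(S)\hookrightarrow E(B_\lambda^0(S))$, $e\mapsto(\alpha,e,\alpha)$ (for a fixed $\alpha$), is an order isomorphism onto a down-set in each ``block'', so that primitivity transfers in both directions; this is a routine check from the definition of the multiplication and of $\leqslant$, and is exactly the kind of computation that Proposition~\ref{proposition1-1} and the preamble on matrix units already make transparent. Everything else is a direct appeal to the three cited results.
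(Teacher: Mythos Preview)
Your argument is correct. The forward implication is exactly the paper's: it simply records that Theorem~II.4.3 of \cite{Petrich1984} together with Proposition~\ref{proposition1-9} and Corollary~\ref{corollary1-8} yield the result, and your write-up unpacks precisely that chain of reasoning.

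For the converse, you actually do more than the paper. The paper's one-line justification (``Proposition~\ref{proposition1-9} and Corollary~\ref{corollary1-8} imply'') really only delivers the direction $S$ primitive inverse $\Rightarrow$ $B_\lambda^0(S)$ primitive inverse; neither of those results has an inverse that would let one pull a Brandt decomposition of $B_\lambda^0(S)$ back to $S$. Your direct computation with the idempotent order---that $E(B_\lambda^0(S))^*=\{(\alpha,e,\alpha):e\in E(S)^*,\ \alpha\in I_\lambda\}$, that $(\alpha,e,\alpha)\leqslant(\beta,f,\beta)$ forces $\alpha=\beta$ and $e\leqslant f$, and hence that primitivity of $(\alpha,e,\alpha)$ is equivalent to primitivity of $e$---is the clean way to close this direction, and it is correct as stated. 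So your proof is essentially the paper's for one implication and a genuine (short) supplement for the other.
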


%%%%%%%%%%%%%%%%%%%%%%%%%%%%%%%%%%%%%%%%%%%%%%%%%%%%%

\section{On homomorphisms of the Brandt $\lambda^0$-extensions of
monoids with zero}

The following proposition is obvious:

\begin{proposition}\label{proposition2-1}
Let $S$ and $K$ be semigroups and let
$\lambda\geqslant 2$. Then the
homomorphism $h\colon B_\lambda^0(S)\rightarrow K$ is trivial if
and only if its restriction $h|_{B_\lambda^0(1)}\colon
B_\lambda^0(1)\rightarrow K$ is a trivial homomorphism.
\end{proposition}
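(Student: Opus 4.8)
The plan is to prove both implications, the forward one being trivial and the reverse one requiring a small argument showing that collapsing the matrix-units part forces the whole extension to collapse. For the necessity direction: if $h\colon B_\lambda^0(S)\to K$ is trivial, then it is constant on all of $B_\lambda^0(S)$, hence in particular constant on the subsemigroup $B_\lambda^0(1)$ (which is indeed a subsemigroup by Proposition~\ref{proposition1-1}, since $\{1_S,0_S\}$ is a submonoid of $S$ when $\lambda\geqslant 2$, or more directly since $B_\lambda^0(1)$ is the semigroup of $I_\lambda\times I_\lambda$-matrix units sitting inside $B_\lambda^0(S)$). So $h|_{B_\lambda^0(1)}$ is trivial. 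Nothing more is needed here.

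For the sufficiency direction, suppose $h|_{B_\lambda^0(1)}$ is trivial, so there is an element $c\in K$ with $(e)h=c$ for every $e\in B_\lambda^0(1)$; in particular $(0)h=c$ where $0$ is the zero of $B_\lambda^0(S)$. Since $0$ is an absorbing element, $c=(0)h=(0\cdot 0)h=(0)h(0)h=c\cdot c$, so $c$ is idempotent, and $(x\cdot 0)h=(x)h\cdot c = c$ for every $x$; likewise $c\cdot(x)h=c$. The key computation is to express an arbitrary non-zero element $(\alpha,s,\beta)$ as a product that routes through the matrix-units subsemigroup. Using $\lambda\geqslant 2$, pick $\gamma\ne\alpha$ and $\delta\ne\beta$ in $I_\lambda$. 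Consider the matrix units $(\gamma,1_S,\alpha)$ and $(\beta,1_S,\delta)$, which lie in $B_\lambda^0(1)$. Then
\begin{equation*}
(\gamma,1_S,\alpha)\cdot(\alpha,s,\beta)\cdot(\beta,1_S,\delta)=(\gamma,s,\delta),
\end{equation*}
but also $(\gamma,1_S,\alpha)\cdot(\beta,1_S,\delta)=0$ because $\alpha\ne\beta$. Apply $h$ to this and use that $h$ is a homomorphism.

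The crux is then to chain these identities together. From the displayed equation, $(\gamma,s,\delta)h = (\gamma,1_S,\alpha)h\cdot(\alpha,s,\beta)h\cdot(\beta,1_S,\delta)h = c\cdot(\alpha,s,\beta)h\cdot c$. On the other hand, bracketing differently or using a parallel product with the roles arranged so that the matrix-unit factors multiply to $0$, one gets $c\cdot(\alpha,s,\beta)h\cdot c = \bigl((\gamma,1_S,\alpha)\cdot(\alpha,s,\beta)\cdot(\beta,1_S,\delta)\bigr)h$ compared against a product where the outer factors annihilate; concretely, $(\gamma,1_S,\alpha)\cdot(\alpha,s,\beta) = (\gamma,s,\beta)$ and then right-multiplying inside $B_\lambda^0(1)$ by something hitting index $\gamma\ne\beta$ yields $0$, forcing $(\gamma,s,\beta)h=c$, and similarly $(\alpha,s,\beta)h=c$. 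Thus every non-zero element also maps to $c$, and combined with $(0)h=c$ we conclude $h$ is trivial. The only mild obstacle is bookkeeping the index choices so that, for each fixed non-zero $(\alpha,s,\beta)$, one genuinely can find matrix units that both reconstruct it on one side and annihilate to $0$ on the other; this is exactly where the hypothesis $\lambda\geqslant 2$ is used, and it goes through cleanly. I expect the write-up to be three or four lines once the index choices are fixed.
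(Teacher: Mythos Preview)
The paper gives no proof of this proposition, simply declaring it ``obvious''. Your argument for the nontrivial (sufficiency) direction is correct in spirit and captures exactly the intended idea: collapse of the matrix-unit subsemigroup forces collapse of everything.

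That said, your write-up contains a small slip and is more convoluted than necessary. The slip: you assert $(\gamma,1_S,\alpha)\cdot(\beta,1_S,\delta)=0$ ``because $\alpha\ne\beta$'', but $\alpha$ and $\beta$ are the indices of an \emph{arbitrary} nonzero element and may well coincide; fortunately you never actually use this product afterwards, so the argument survives. A cleaner three-line version avoids the detour through $(\gamma,s,\delta)$ altogether: given a nonzero $(\alpha,s,\beta)$, choose $\gamma\ne\alpha$ (possible since $\lambda\geqslant 2$); from $(\gamma,1_S,\gamma)\cdot(\alpha,s,\beta)=0$ one gets $c\cdot((\alpha,s,\beta))h=c$, while from $(\alpha,1_S,\alpha)\cdot(\alpha,s,\beta)=(\alpha,s,\beta)$ one gets $c\cdot((\alpha,s,\beta))h=((\alpha,s,\beta))h$. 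Comparing the two gives $((\alpha,s,\beta))h=c$, and the proof is complete.
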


\begin{proposition}\label{proposition2-2}
Let $S$ be a monoid with zero, $\lambda\geqslant 1$ any cardinal,
and $B_\lambda^0(S)$ the Brandt $\lambda^0$-extension of $S$. Then
every non-trivial homomorphic image of $B_\lambda^0(S)$ is the
Brandt $\lambda^0$-extension of some monoid with zero. Moreover,
if $T$ is the image of $B_\lambda^0(S)$ under a homomorphism $h,$
then $T$
is
isomorphic to the Brandt $\lambda^0$-extension of the
homomorphic image of the monoid $S_{\alpha,\alpha}$ under the
homomorphism $h$ for any $\alpha\in I_\lambda$.
\end{proposition}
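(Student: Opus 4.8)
The plan is to reduce the whole extension to a single ``diagonal block'' $S_{\alpha,\alpha}$. If $\lambda=1$ then $B^0_1(S)=S=S_{\alpha,\alpha}$ and there is nothing to prove, so I assume $\lambda\geqslant 2$ and let $h\colon B^0_\lambda(S)\to T$ be a surjective non-trivial homomorphism. Two preliminary facts: since $0$ is absorbing and $h$ is onto, $0_T:=(0)h$ is the zero of $T$; and since $h$ is non-trivial, Proposition~\ref{proposition2-1} shows $h|_{B^0_\lambda(1)}$ is non-trivial, whence, as $B^0_\lambda(1)\cong B_\lambda$ is congruence-free (by the Gutik--Pavlyk theorem recalled above, since $B_\lambda\cong B^0_\lambda(\{1_S,0_S\})$ and the two-element monoid with zero $\{1_S,0_S\}$ is congruence-free), the map $h|_{B^0_\lambda(1)}$ is injective. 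Thus, writing $f_{\gamma\delta}:=((\gamma,1_S,\delta))h$, the $f_{\gamma\delta}$ are pairwise distinct elements of $T$ multiplying exactly as $I_\lambda\times I_\lambda$-matrix units; in particular $f_{\gamma\alpha}f_{\alpha\delta}=f_{\gamma\delta}$ and $f_{\alpha\gamma}f_{\gamma\alpha}=f_{\alpha\alpha}$ for all indices.

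Next I would fix $\alpha\in I_\lambda$ and put $P:=(S_{\alpha,\alpha})h$. As $S_{\alpha,\alpha}$ is a submonoid of $B^0_\lambda(S)$ with identity $(\alpha,1_S,\alpha)$ and zero $0$, the set $P$ is a monoid with zero, with $1_P=f_{\alpha\alpha}$ and $0_P=0_T$, and it is precisely the homomorphic image of the monoid $S_{\alpha,\alpha}\cong S$ under $h$. The key identity is the factorisation $(\gamma,s,\delta)=(\gamma,1_S,\alpha)\cdot(\alpha,s,\alpha)\cdot(\alpha,1_S,\delta)$, valid for every non-zero element $(\gamma,s,\delta)$ of $B^0_\lambda(S)$ (here one uses $1_S s=s 1_S=s\neq 0_S$); applying $h$ gives $((\gamma,s,\delta))h=f_{\gamma\alpha}\,p\,f_{\alpha\delta}$ with $p=((\alpha,s,\alpha))h\in P$. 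Hence every element of $T$ is either $0_T$ or of the form $f_{\gamma\alpha}\,p\,f_{\alpha\delta}$ with $p\in P$.

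I would then define $\varphi\colon B^0_\lambda(P)\to T$ by $(\gamma,p,\delta)\varphi=f_{\gamma\alpha}\,p\,f_{\alpha\delta}$ for $p\in P\setminus\{0_T\}$, $\gamma,\delta\in I_\lambda$, and $0\varphi=0_T$, and check it is an isomorphism. The point on which everything hinges is that $f_{\gamma\alpha}\,p\,f_{\alpha\delta}\neq 0_T$ whenever $p\neq 0_T$, which follows from $f_{\alpha\gamma}(f_{\gamma\alpha}\,p\,f_{\alpha\delta})f_{\delta\alpha}=f_{\alpha\alpha}\,p\,f_{\alpha\alpha}=1_P\,p\,1_P=p$; the same ``sandwiching'' $t\mapsto f_{\alpha\gamma}\,t\,f_{\delta\alpha}$ recovers the triple $(\gamma,p,\delta)$ from its image (when the indices disagree, one of $f_{\alpha\gamma'}f_{\gamma\alpha}$, $f_{\alpha\delta}f_{\delta'\alpha}$ equals $0_T$, again forcing $p=0_T$), giving injectivity, while surjectivity is exactly the factorisation above. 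For the homomorphism law I would split into cases on whether $\delta=\gamma'$ and, when so, on whether the product $pp'$ computed in $P$ equals $0_P$, using $f_{\alpha\delta}f_{\gamma'\alpha}=f_{\alpha\alpha}$ iff $\delta=\gamma'$ together with $p\,f_{\alpha\alpha}\,p'=pp'$; each case reproduces the multiplication in $B^0_\lambda(P)$, the degenerate cases landing on $0_T$ on both sides. This yields $T\cong B^0_\lambda(P)=B^0_\lambda((S_{\alpha,\alpha})h)$; and since $\alpha\in I_\lambda$ was arbitrary, this holds for every $\alpha$, which is the ``moreover'' assertion.

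I do not foresee a deep obstacle: the whole argument is a direct computation from the definition of the Brandt $\lambda^0$-extension. The two places needing genuine care are (i) justifying that $h$ is injective on the matrix-unit subsemigroup $B^0_\lambda(1)$, which is where congruence-freeness of $B_\lambda$ enters, and (ii) the bookkeeping in the homomorphism check for $\varphi$, where the collapsing subcases ($\delta\neq\gamma'$, or $pp'=0_P$) must be matched correctly against the value $0_T$ on the $T$-side.
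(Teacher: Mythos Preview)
Your proposal is correct and follows essentially the same route as the paper: both fix a diagonal block $S_{\alpha,\alpha}$, use congruence-freeness of the matrix-unit subsemigroup $B^0_\lambda(1)$ to ensure $h$ is injective on it (the paper cites Gluskin's theorem directly, you derive it from the Gutik--Pavlyk characterisation, but the content is identical), and then exploit the sandwiching maps $t\mapsto f_{\gamma\alpha}\,t\,f_{\alpha\delta}$ and their inverses to set up the bijection between $T$ and $B^0_\lambda(P)$. The only cosmetic difference is that the paper phrases the bijectivity by showing $|T^*_{\alpha,\beta}|=|T^*_{\gamma,\delta}|$ for all index pairs and then verifies the multiplication table, whereas you package the same computation as an explicit isomorphism $\varphi\colon B^0_\lambda(P)\to T$; your formulation is arguably cleaner, but the underlying argument is the same.
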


\begin{proof}
In the case $\lambda=1$ the proof is trivial. Therefore we may
assume that $\lambda\geqslant 2$.
Let $T$ be a semigroup and $h\colon B_\lambda^0(S)\rightarrow T$ a
homomorphism. Without loss of generality we can assume that the
homomorphism $h\colon B_\lambda^0(S)\rightarrow T$ is a surjective
map. Note that $(0)h=0_T$ is the
zero in $T$, where $0$ is the
zero in
$B_\lambda^0(S)$. By Theorem~1
of
\cite{Gluskin1955}, the semigroup
$B_\lambda^0(1)$ is congruence-free and thus
Proposition~\ref{proposition2-1} implies that the restriction
$h|_{B_\lambda^0(1)}\colon B_\lambda^0(1)\rightarrow T$ of the
homomorphism $h$ is an isomorphism.

We fix $\alpha_0\in I_\lambda$. Next we shall show that the
semigroup $T$ is the Brandt $\lambda^0$-extension of a semigroup
$T_0$, where $T_0$ is the homomorphic image of the subsemigroup
$S_{\alpha_0,\alpha_0},$ under the homomorphism $h$. For every
$\alpha,\beta\in I_\lambda,$ we denote
$1_{\alpha,\beta}=((\alpha,1_S,\beta))h$ and
$T^*_{\alpha,\beta}=\{((\alpha,s,\beta))h\mid s\in S\}\setminus\{
0_T\}$. First we show that for any $\alpha,\beta,\gamma,\delta\in
I_\lambda$ we have
$\big|T^*_{\alpha,\beta}\big|=\big|T^*_{\gamma,\delta}\big|$. 

We
define the maps $\varphi_{(\alpha,\beta)}^{(\gamma,\delta)}\colon
T^*_{\alpha,\beta}\rightarrow T^*_{\gamma,\delta}$ and
$\varphi^{(\alpha,\beta)}_{(\gamma,\delta)}\colon
T^*_{\gamma,\delta}\rightarrow T^*_{\alpha,\beta}$ by the formulae
$(x)\varphi_{(\alpha,\beta)}^{(\gamma,\delta)}=1_{\gamma,\alpha}\cdot
x\cdot 1_{\beta,\delta}$ and
$(x)\varphi^{(\alpha,\beta)}_{(\gamma,\delta)}=1_{\alpha,\gamma}\cdot
x\cdot 1_{\delta,\beta}$. Then for any
$s_{\alpha,\beta}=((\alpha,s,\beta))h\in T^*_{\alpha,\beta}$,
$s\in S\setminus\{ 0\}$, we get
\begin{equation*}
 \begin{split}
    (s_{\alpha,\beta})
    \big(\varphi_{(\alpha,\beta)}^{(\gamma,\delta)}\circ
    \varphi^{(\alpha,\beta)}_{(\gamma,\delta)}\big) & =
    1_{\alpha,\gamma}\cdot 1_{\gamma,\alpha}\cdot
    s_{\alpha,\beta}\cdot 1_{\beta,\delta}\cdot
    1_{\delta,\beta}=\\
        & =((\alpha,1_s,\gamma))h\cdot ((\gamma,1_s,\alpha))h\cdot
        ((\alpha,s,\beta))h\cdot ((\beta,1_s,\delta))h\cdot
        ((\delta,1_s,\beta))h=\\
        & =\big((\alpha,1_s,\gamma)\cdot (\gamma,1_s,\alpha)\cdot
        (\alpha,s,\beta)\cdot(\beta,1_s,\delta)\cdot
        (\delta,1_s,\beta)\big)h=\\
        & =((\alpha,s,\beta))h=s_{\alpha,\beta},
\end{split}
\end{equation*}
and similarly
\begin{equation*}
    (s_{(\gamma,\delta)})
    \big(\varphi^{(\alpha,\beta)}_{(\gamma,\delta)}\circ
    \varphi_{(\alpha,\beta)}^{(\gamma,\delta)}\big)=
    s_{\gamma,\delta}.
\end{equation*}
Hence the compositions
$\varphi_{(\alpha,\beta)}^{(\gamma,\delta)}\circ
\varphi^{(\alpha,\beta)}_{(\gamma,\delta)}\colon
T^*_{\alpha,\beta}\rightarrow T^*_{\alpha,\beta}$ and
$\varphi^{(\alpha,\beta)}_{(\gamma,\delta)}\circ
\varphi_{(\alpha,\beta)}^{(\gamma,\delta)}\colon
T^*_{\gamma,\delta}\rightarrow T^*_{\gamma,\delta}$ are the
identity maps. Therefore the maps
$\varphi_{(\alpha,\beta)}^{(\gamma,\delta)}$ and
$\varphi^{(\alpha,\beta)}_{(\gamma,\delta)}$ are mutually
invertible and hence we have that
$\big|T^*_{\alpha,\beta}\big|=\big|T^*_{\gamma,\delta}\big|=|T_0\setminus\{
0_T\}|$. This implies that $T=I_\lambda\times (T_0\setminus\{
0_T\})\times I_\lambda\cup\{ 0_T\}$.

Elementary calculations shows that for all $s,t\in S\setminus\{
0_S\}$ we have
\begin{itemize}
    \item[(1)] $s_{\alpha,\beta}\cdot t_{\beta,\gamma}=
    \left\{%
\begin{array}{ll}
    (st)_{\alpha,\gamma}, & \hbox{ if } st\neq 0_S;\\
    0_T, & \hbox{ if } st=0_S;\\
\end{array}%
\right.    $
    \item[(2)] $s_{\alpha,\beta}\cdot t_{\gamma,\delta}=0_T$
    for $\beta\neq\gamma$; and
    \item[(3)] $s_{\alpha,\beta}\cdot 0_T=0_T\cdot
    s_{\alpha,\beta}=0_T$,
    \end{itemize}
$\alpha,\beta,\gamma,\delta\in I_\lambda$, and hence $T$ is the
Brandt $\lambda^0$-extension of the semigroup $T_0$. This proves
the last assertion
of the proposition.
\end{proof}

Since a homomorphic image of a subgroup is a subgroup,
Propositions~\ref{proposition2-1} and \ref{proposition2-2} imply
the following:

\begin{corollary}\label{corollary2-3}
Every non-trivial homomorphic image of a Brandt semigroup is a
Brandt semigroup.
\end{corollary}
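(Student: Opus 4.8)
The plan is to reduce everything to the two preceding propositions together with the elementary fact, recorded just before the statement, that a homomorphic image of a group is a group. First I would recall that, by definition, a Brandt semigroup $S$ is the Brandt $\lambda^0$-extension $B_\lambda^0(G^0)$ of a group with adjoined zero $G^0$, for some cardinal $\lambda\geqslant 1$. Given a non-trivial homomorphism $h$ of $S$ onto a semigroup $T$ (we may assume $h$ surjective), Proposition~\ref{proposition2-2} immediately yields that $T$ is isomorphic to the Brandt $\lambda^0$-extension $B_\lambda^0(T_0)$ of a monoid with zero $T_0$, where $T_0$ is the image under $h$ of the submonoid $S_{\alpha_0,\alpha_0}\cong G^0$ for a fixed $\alpha_0\in I_\lambda$. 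So the whole problem comes down to identifying $T_0$.

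Next I would argue that $T_0$ is itself a group with adjoined zero. The restriction of $h$ to the group of units of $S_{\alpha_0,\alpha_0}$ (a copy of $G$) is a semigroup homomorphism, so its image $H$ is a homomorphic image of the group $G$, hence a subgroup of $T$. Since $h$ carries the zero of $S$ to the zero $0_T$ of $T$, we have $T_0=H\cup\{0_T\}$, and it remains only to check that $0_T\notin H$. This verification is the one place where anything can go wrong, and it is short: if $0_T$ belonged to $H$, say $0_T=(g)h$ with $g$ in the group of units, then $(1_{S_{\alpha_0,\alpha_0}})h=(gg^{-1})h=(g)h\cdot(g^{-1})h=0_T\cdot(g^{-1})h=0_T$, which forces every element of $S_{\alpha_0,\alpha_0}$ to be sent to $0_T$, so $T_0=\{0_T\}$; but then $B_\lambda^0(T_0)$ is the one-element semigroup, contradicting the non-triviality of $T$.

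Having excluded that possibility, we obtain $T_0=H^0$, the group $H$ with an adjoined zero, and therefore $T\cong B_\lambda^0(H^0)$ is a Brandt semigroup, which finishes the argument; the case $\lambda=1$ is subsumed, since then $B_1^0(G^0)=G^0$ and the same computation shows a non-trivial image equals $H^0$. I expect no serious obstacle: essentially all the substantive work — that a non-trivial homomorphic image of a Brandt $\lambda^0$-extension is again a Brandt $\lambda^0$-extension of its ``coordinate'' monoid — is already carried out in Proposition~\ref{proposition2-2}, and what remains is the observation that a homomorphic image of a group is a group, plus the small zero-versus-identity check above. One can also phrase the non-triviality bookkeeping through Proposition~\ref{proposition2-1}, but it is not needed for the plan as described.
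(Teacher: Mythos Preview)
Your proposal is correct and follows essentially the same route as the paper, which simply records that the result follows from Propositions~\ref{proposition2-1} and~\ref{proposition2-2} together with the fact that a homomorphic image of a subgroup is a subgroup. You have spelled out the details the paper leaves implicit, in particular the verification that $0_T\notin H$; the paper handles that step by citing Proposition~\ref{proposition2-1} (non-triviality of $h$ forces non-triviality on $B_\lambda^0(1)$, hence $((\alpha_0,1_S,\alpha_0))h\neq 0_T$), which is exactly the alternative phrasing you mention at the end.
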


\begin{proposition}\label{proposition2-4}
Let $S$ and $T$ be monoids with zeros, and let $\lambda_1$ and
$\lambda_2$ be any cardinals such that
$\lambda_2\geqslant\lambda_1\geqslant 1$. Let $\sigma\colon
B_{\lambda_1}^0(S)\rightarrow B_{\lambda_2}^0(T)$ be a non-trivial
homomorphism.
Suppose that
the monoid $T$ has
the following properties:
\begin{itemize}
    \item[1)] $T$ does not contain the semigroup of
    $I_{\lambda_1}\times I_{\lambda_1}$-matrix units; and
    \item[2)] $T$ does not contain the semigroup of $2\times 2$-matrix
    units $B_2$ such that the zero of $B_2$ is the zero of $T$.
\end{itemize}
Then the following assertions hold:
\begin{itemize}
    \item[$(i)$] The image of zero $0_S$ of the semigroup
    $B_{\lambda_1}^0(S)$ under the homomorphism $\sigma$ is the zero
    of the semigroup $B_{\lambda_2}^0(T)$;
    \item[$(ii)$] If $(\alpha,1_{S},\beta)$ and
    $(\gamma,1_{S},\delta)$ are distinct elements of the Brandt
    $\lambda_1$-extension of the semigroup $S$,
    $\alpha,\beta,\gamma,\delta\in I_{\lambda_1}$, such that
    $((\alpha,1_{S},\beta))\sigma\in
    {T}_{\mu,\nu}$ and $((\gamma,1_{S},\delta))\sigma\in
    {T}_{\iota,\kappa}$ for some $\mu,\nu,\iota,\kappa\in
    I_{\lambda_2}$, then ${T}_{\mu,\nu}^*\cap
    {T}_{\iota,\kappa}^*=\varnothing$.
\end{itemize}
\end{proposition}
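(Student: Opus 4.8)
The plan is to work entirely with the semigroup of matrix units sitting inside $B_{\lambda_1}^0(S)$, namely $B_{\lambda_1}^0(1)=\{(\alpha,1_S,\beta)\mid\alpha,\beta\in I_{\lambda_1}\}\cup\{0_S\}$, and to trace where $\sigma$ can send its elements. First I would record the elementary fact that $\sigma$ carries idempotents to idempotents and the zero $0_S$ to an idempotent $e=(0_S)\sigma$ of $B_{\lambda_2}^0(T)$ which is a (left and right) zero for the whole image $\sigma(B_{\lambda_1}^0(S))$, since $0_S$ is a zero in the domain. In $B_{\lambda_2}^0(T)$ every idempotent is either the global zero $0$, or of the form $(\mu,f,\mu)$ with $f$ an idempotent of $T$; I would note that $(\mu,f,\mu)$ annihilates all elements whose "column index" differs from $\mu$, so it cannot be a zero for the whole image unless that image is concentrated in the single index $\mu$. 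This is exactly the dichotomy that drives both parts.

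For part $(i)$: suppose, for contradiction, that $e=(0_S)\sigma\neq 0$. Then $e=(\mu_0,f_0,\mu_0)$ for some $\mu_0\in I_{\lambda_2}$ and idempotent $f_0\in T$, and since $e$ is a two-sided zero of the image, the entire image $\sigma(B_{\lambda_1}^0(S))$ must lie in $T_{\mu_0,\mu_0}\cup\{0\}$ — any element with an index other than $\mu_0$, multiplied by $e$, would give $0\neq e$, a contradiction. But then consider the restriction $\sigma|_{B_{\lambda_1}^0(1)}$. Its image lies in a copy of (a homomorphic image of) the monoid $T_{\mu_0,\mu_0}\cong T$, with the matrix units $(\alpha,1_S,\alpha)$ going to idempotents of $T$ that are $\mathcal{H}$-equivalent (via the standard conjugation by the images of $(\alpha,1_S,\beta)$, as in the proof of Proposition~\ref{proposition2-2}). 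Since $B_{\lambda_1}^0(1)$ is congruence-free (Gluskin's theorem, already invoked in the excerpt), $\sigma|_{B_{\lambda_1}^0(1)}$ is either trivial or injective. If it is trivial then $\sigma$ itself is trivial by Proposition~\ref{proposition2-1}, contrary to hypothesis; so it is injective, and its image is an isomorphic copy of the semigroup of $I_{\lambda_1}\times I_{\lambda_1}$-matrix units inside $T_{\mu_0,\mu_0}\cong T$. That contradicts hypothesis~1). Hence $e=0$, proving $(i)$.

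For part $(ii)$: by $(i)$ the zero goes to the zero, so for every $\alpha,\beta$ the element $(\alpha,1_S,\beta)\sigma$ is a nonzero element of some "block" $T_{\mu,\nu}$ (nonzero because $\sigma|_{B_{\lambda_1}^0(1)}$ is injective, again by congruence-freeness and non-triviality). Write $(\alpha,1_S,\beta)\sigma\in T^*_{\mu,\nu}$ and $(\gamma,1_S,\delta)\sigma\in T^*_{\iota,\kappa}$ with $(\alpha,\beta)\neq(\gamma,\delta)$, and suppose toward a contradiction that $T^*_{\mu,\nu}\cap T^*_{\iota,\kappa}\neq\varnothing$, which forces $\mu=\iota$ and $\nu=\kappa$ (two blocks of $B_{\lambda_2}^0(T)$ are equal or meet only in $0$). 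The idea is now to produce inside $T$ a copy of $B_2$ whose zero is the zero of $T$. The relevant sub-configuration of $B_{\lambda_1}^0(1)$ is the set of matrix units on the two "coordinates" occurring among $\alpha,\beta,\gamma,\delta$; its image under $\sigma$ is, on the one hand, forced by congruence-freeness to be an isomorphic copy of a semigroup of matrix units, and on the other hand must live inside the single block $T_{\mu,\nu}$, i.e. inside $T$ itself (via the coordinate-$(\mu,\nu)$ identification, which is a partial bijection, not a homomorphism — so one must be careful). Multiplying the images of the distinct idempotents among $(\alpha,1_S,\alpha),(\beta,1_S,\beta),(\gamma,1_S,\gamma),(\delta,1_S,\delta)$ and using that products of elements lying in the same block $T_{\mu,\nu}$ but with mismatched inner coordinates collapse to $0_T$, one extracts two orthogonal nonzero idempotents of $T$ together with elements linking them, i.e. a copy of $B_2$ with zero equal to $0_T$, contradicting hypothesis~2). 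Therefore $T^*_{\mu,\nu}\cap T^*_{\iota,\kappa}=\varnothing$.

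The main obstacle is the bookkeeping in part $(ii)$: one has to case-split on how many of $\alpha,\beta,\gamma,\delta$ coincide (the cases $|\{\alpha,\beta\}\cap\{\gamma,\delta\}|=0,1,2$ behave differently), translate "two matrix units land in the same block $T_{\mu,\nu}$" into a genuine multiplicative relation in $T$, and check that the two idempotents of $T$ thereby produced are honestly distinct and honestly annihilate each other — it is here that hypothesis~2) (the zero of $B_2$ being the zero of $T$) gets used, ruling out the degenerate possibility that the "$B_2$" one builds has a zero different from $0_T$. Everything else is a routine application of congruence-freeness of $B_{\lambda_1}^0(1)$ plus the multiplication rule in a Brandt $\lambda^0$-extension.
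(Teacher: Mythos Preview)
Your proof of $(i)$ is correct and is the paper's argument in contrapositive form: you show that if $(0_S)\sigma\neq 0$ then the entire image is trapped in a single diagonal block $T^*_{\mu_0,\mu_0}$ and then invoke hypothesis~1); the paper instead uses hypothesis~1) to force the image \emph{out} of any single block and derives a contradiction from the absorbing property of $(0_S)\sigma$. Same content, opposite direction.

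For $(ii)$ the strategy is right but one assertion is wrong, not merely incomplete: the sub-configuration on $\{\alpha,\beta,\gamma,\delta\}$ does \emph{not} live in the block $T_{\mu,\nu}$. Images of the idempotents $(\alpha,1_S,\alpha),(\gamma,1_S,\gamma),\dots$ are idempotents of $B_{\lambda_2}^0(T)$, hence lie in \emph{diagonal} blocks $T^*_{\eta,\eta}$, never in an off-diagonal $T^*_{\mu,\nu}$. What actually has to be argued is that the relevant idempotents and their linking elements all land in the \emph{same} diagonal block $T^*_{\mu,\mu}$: from $((\alpha,1_S,\alpha))\sigma\cdot((\alpha,1_S,\beta))\sigma=((\alpha,1_S,\beta))\sigma\in T^*_{\mu,\nu}$ one reads off $((\alpha,1_S,\alpha))\sigma\in T^*_{\mu,\mu}$, and similarly $((\gamma,1_S,\gamma))\sigma\in T^*_{\mu,\mu}$ when $\alpha\neq\gamma$; then the same trick forces $((\alpha,1_S,\gamma))\sigma,((\gamma,1_S,\alpha))\sigma\in T^*_{\mu,\mu}$. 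Only after this does the identification $T_{\mu,\mu}\cong T$ turn the image of the $B_2$ on $\{\alpha,\gamma\}$ into a genuine copy of $B_2$ in $T$ whose zero is $0_T$ (because $((\alpha,1_S,\alpha))\sigma\cdot((\gamma,1_S,\gamma))\sigma=(0_S)\sigma=0$, which in the block $T_{\mu,\mu}$ means the product in $T$ equals $0_T$). Your phrase ``products of elements lying in the same block $T_{\mu,\nu}$ but with mismatched inner coordinates collapse to $0_T$'' does not describe any relation that actually holds in $B_{\lambda_2}^0(T)$ and should be replaced by this index-chasing. The paper packages exactly this computation as an explicit chain of four reductions: first rule out $((\alpha,1_S,\beta))\sigma$ and $((\beta,1_S,\alpha))\sigma$ sharing a diagonal block (this is where hypothesis~2) enters), then any single non-idempotent matrix unit in a diagonal block, then two idempotents in one diagonal block, and finally two non-idempotents in one block. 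Once you fix the block assignment your direct route and the paper's stepwise route are the same argument.
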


\begin{proof}
Suppose to the contrary, that $(0_S)\sigma$ is not the zero of the
semigroup $B_{\lambda_2}^0(T)$. Since the element $(0_S)\sigma$ is
an idempotent of $B_{\lambda_2}^0(T)$, there exists $\alpha\in
I_{\lambda_2}$ such that $(0_S)\sigma\in T^*_{\alpha,\alpha}$.
Since $B_{\lambda_1}^0(1)$ is a congruence-free semigroup,
$\sigma$ is a non-trivial homomorphism and $T$ does not contain
the semigroup of $I_{\lambda_1}\times I_{\lambda_1}$-matrix units.
We can conclude that there exist $\gamma,\delta\in I_{\lambda_2}$
such that $\gamma\neq\alpha$ or $\delta\neq\alpha$ and
$(B_{\lambda_1}^0(S))\sigma\cap
T^*_{\gamma,\delta}\neq\varnothing$. Let
$x\in(B_{\lambda_1}^0(S))\sigma\cap T^*_{\gamma,\delta}$. If
$\gamma\neq\alpha$ then the element $(0_S)\sigma\cdot x$ is the
zero of $B_{\lambda_2}^0(T)$ and if $\delta\neq\alpha$ then the
element $x\cdot (0_S)\sigma$ is the zero of $B_{\lambda_2}^0(T)$.
But $x=(S)\sigma$ for some non-zero element $s$ of the semigroup
$B_{\lambda_1}^0(1)$. Therefore
\begin{equation*}
    x\cdot(0_S)\sigma=(s\cdot 0_S)\sigma=(0_S)\sigma \qquad
%\end{equation*}
\mbox{and} \qquad
%\begin{equation*}
    (0_S)\sigma\cdot x=(0_S\cdot s)\sigma=(0_S)\sigma,
\end{equation*}
a contradiction. Hence the statement $(i)$ holds.

Next we shall show that there does not exist $\alpha_0\in
I_{\lambda_2}$ such that $((\alpha,1_S,\beta))\sigma$,
$((\beta,1_S,\alpha))\sigma\in T^*_{\alpha_0,\alpha_0}$ for
distinct $\alpha,\beta\in I_{\lambda_1}$. Suppose to the contrary.
Then since $T_{\alpha_0,\alpha_0}$ is a subsemigroup in
$B_{\lambda_2}^0(T)$ and $\sigma$ is a non-trivial homomorphism
Proposition~\ref{proposition2-1} implies
\begin{equation*}
    ((\alpha,1_S,\beta)))\sigma\cdot((\beta,1_S,\alpha))\sigma=
    ((\alpha,1_S,\beta)\cdot(\beta,1_S,\alpha))\sigma=
    ((\alpha,1_S,\alpha))\sigma\in T^*_{\alpha_0,\alpha_0}
\end{equation*}
and
\begin{equation*}
    ((\beta,1_S,\alpha))\sigma\cdot((\alpha,1_S,\beta)))\sigma=
    ((\beta,1_S,\alpha)\cdot(\alpha,1_S,\beta))\sigma=
    ((\beta,1_S,\beta))\sigma\in T^*_{\alpha_0,\alpha_0},
\end{equation*}
and hence
\begin{equation*}
    (0_S)\sigma=((\alpha,1_S,\alpha)\cdot(\beta,1_S,\beta))\sigma=
    ((\alpha,1_S,\alpha))\sigma\cdot((\beta,1_S,\beta))\sigma\in
    T_{\alpha_0,\alpha_0}.
\end{equation*}
Then by statement $(i)$, the element $(0_S)\sigma$ is the zero of
the semigroup $B_{\lambda_2}^0(T)$. This contradicts the
assumption that the monoid $T$ does not contain the semigroup of
$2\times 2$-matrix units $B_2$ such that the zero of $B_2$ is the
zero of $T$.

In the next step we shall show that there does not exist
$\alpha_0\in I_{\lambda_2}$ such that
$((\alpha,1_S,\beta))\sigma\in T^*_{\alpha_0,\alpha_0}$. Otherwise
we would have
\begin{equation*}
    (\alpha,1_S,\beta)\cdot(\beta.1_S,\alpha)=(\alpha,1_S,\alpha),
\end{equation*}
and since the homomorphism $\sigma$ is non-trivial, we would have
\begin{equation*}
    ((\alpha,1_S,\beta))\sigma\cdot((\beta.1_S,\alpha))\sigma=
    ((\alpha,1_S,\alpha))\sigma\in T^*_{\alpha_0,\alpha_0},
\end{equation*}
and hence $((\alpha,1_S,\alpha))\sigma\in
T^*_{\alpha_0,\alpha_0}$. Therefore $((\alpha,1_S,\beta))\sigma\in
T^*_{\alpha_0,\alpha_0}$ and $((\beta.1_S,\alpha))\sigma\in
T^*_{\alpha_0,\alpha_0}$. This contradicts the previous statement.

We shall show that there does not exist two distinct non-zero
idempotents $(\alpha,1_S,\alpha)$ and $(\beta,1_S,\beta)$ in
$B_{\lambda_1}^0(S)$, $\alpha,\beta\in I_{\lambda_1}$, such that
$((\alpha,1_S,\alpha))\sigma, ((\beta,1_S,\beta))\sigma\in
T^*_{\alpha_0,\alpha_0}$ for some $\alpha_0\in I_{\lambda_2}$.
Suppose to the contrary. Then
\begin{equation*}
    (\alpha,1_S,\alpha)=(\alpha,1_S,\beta)\cdot(\beta,1_S,\alpha)
    \qquad \mbox{and} \qquad
    (\beta,1_S,\beta)=(\beta,1_s,\alpha)\cdot(\alpha,1_S,\beta),
\end{equation*}
and hence
\begin{equation*}
    ((\alpha,1_S,\alpha))\sigma=
    ((\alpha,1_S,\beta))\sigma\cdot((\beta,1_S,\alpha))\sigma
    \quad \mbox{and} \quad ((\beta,1_S,\beta))\sigma=
    ((\beta,1_s,\alpha))\sigma\cdot((\alpha,1_S,\beta))\sigma.
\end{equation*}
Since $\sigma$ is a non-trivial homomorphism,
Proposition~\ref{proposition2-1} implies that
$(\alpha,1_S,\beta)\sigma\in T^*_{\alpha_0,\alpha_0}$ and\break
$(\beta,1_s,\alpha)\sigma\in T^*_{\alpha_0,\alpha_0}$. Hence
$(\alpha,1_S,\alpha)\sigma\in T^*_{\alpha_0,\alpha_0}$ and
$(\beta,1_s,\beta)\sigma\in T^*_{\alpha_0,\alpha_0}$. This is in
contradiction with the previous statement.

In order to complete our proof we need to prove that there do not
exist $\mu,\nu\in I_{\lambda_2}$ such that
$((\alpha,1_S,\beta))\sigma$, $((\gamma,1_S,\delta))\sigma\in
T^*_{\mu,\nu}$ for distinct non-idempotent elements
$(\alpha,1_S,\beta)$ and $(\gamma,1_S,\delta)$ from the semigroup
$B_{\lambda_1}(S)$. Suppose to the contrary. We consider only the
case $\alpha\neq\gamma$. In the case $\beta\neq\delta$ the proof
is similar. Then since $\sigma$ is non-trivial homomorphism,
Proposition~\ref{proposition2-1} implies
\begin{equation*}
    ((\alpha,1_S,\alpha))\sigma=
    ((\alpha,1_S,\beta)\cdot(\beta,1_S,\alpha))\sigma=
    ((\alpha,1_S,\beta))\sigma\cdot((\beta,1_S,\alpha))\sigma\in
    T^*_{\mu,\nu}
\end{equation*}
and
\begin{equation*}
    ((\gamma,1_S,\gamma))\sigma=
    ((\gamma,1_S,\delta)\cdot(\delta,1_S,\gamma))\sigma=
    ((\gamma,1_S,\delta))\sigma\cdot((\delta,1_S,\gamma))\sigma\in
    T^*_{\mu,\nu}.
\end{equation*}
But this contradicts the previous statement. The obtained
contradiction implies the statement of the proposition.
\end{proof}

The following example shows that Proposition~\ref{proposition2-4}
fails in the case when the semigroup $T$ contains the
semigroup of $2\times 2$-matrix units $B_2$ such that zero of $T$
is zero of $B_2$.

\begin{example}\label{example2-5}
Let $B_2$ be the semigroup of $2\times 2$-matrix units. Let $S$ be
the semigroup $B_2$ with the adjoined identity and
$I_4=\{1,2,3,4\}$. We define a map $h\colon B_4\rightarrow B_4(S)$
as follows:
\begin{align*}
 (0)h&=0,&  &{}\\
 ((1,1))h&=(1,(1,1),1),& ((2,2))h&=(1,(2,2),1),\\
 ((3,3))h&=(2,(1,1),2),& ((4,4))h&=(2,(2,2),2),\\
 ((1,2))h&=(1,(1,2),1),& ((2,1))h&=(1,(2,1),1),\\
 ((1,3))h&=(1,(1,1),2),& ((3,1))h&=(2,(1,1),1),\\
 ((1,4))h&=(1,(1,2),2),& ((4,1))h&=(2,(2,1),1),\\
 ((2,3))h&=(1,(2,1),2),& ((3,2))h&=(2,(1,2),1),\\
 ((2,4))h&=(1,(2,2),2),& ((4,2))h&=(2,(2,2),1),\\
 ((3,4))h&=(2,(1,2),2),& ((4,3))h&=(2,(2,1),2),
\end{align*}
where by $0$ we denote the zeros of semigroups $B_4$ and $B_4(S)$.
Elementary calculation shows that the map $h\colon B_4\rightarrow
B_4(S)$ is a semigroup homomorphism.
\end{example}

The following example shows that Proposition~\ref{proposition2-4}
fails in the case when the semigroup $T$ contains the
semigroup of $I_{\lambda_1}\times I_{\lambda_1}$-matrix units
$B_{\lambda_1}$.

\begin{example}\label{example2-6}
Let $\lambda_1$ and $\lambda_2$ be any cardinals $\geqslant 2$.
Let $P$ be the semigroup of $I_{\lambda_1}\times
I_{\lambda_1}$-matrix units $B_{\lambda_1}$ with the adjoined
identity $\iota_1$, $0_1$ be the zero of $B_{\lambda_1}$ and let
$z\notin P$. We extend
the
semigroup operation onto $T=P\cup\{ z\}$
as follows:
\begin{equation*}
    s\cdot z=z\cdot s=z\cdot z=z, \qquad \mbox{for all~} s\in P.
\end{equation*}
Obviously, $z$ is the zero of the semigroup $T$.

Let $S$ be a monoid with the zero $0_S$ of cardinality $\geqslant
3$. We define a map $\sigma\colon B_{\lambda_1}^0(S)\rightarrow
B_{\lambda_2}^0(T)$ as follows: fix arbitrary $\alpha\in
I_{\lambda_2}$ and put
\begin{equation*}
    (x)\sigma=
    \left\{%
\begin{array}{ll}
    (\alpha,(\beta,\iota_1,\gamma),\alpha), &
    \hbox{~if~} x=(\beta,s,\gamma)
    \hbox{~is a non-zero element of~}B_{\lambda_1}^0(S);\\
    (\alpha,0_1,\alpha), & \hbox{~if~} x \hbox{~is zero of ~}B_{\lambda_1}^0(S).\\
\end{array}%
\right.
\end{equation*}
Obviously, such a map $\sigma\colon B_{\lambda_1}^0(S)\rightarrow
B_{\lambda_2}^0(T)$ is a semigroup homomorphisms.
\end{example}

\begin{definition}\label{definition2-7}
Let $\lambda$ be any cardinal $\geqslant 2$. We shall say that a
semigroup $S$ has the \emph{$\mathcal{B}^*$-property} if $S$ does
not contain the semigroup of $2\times 2$-matrix units and that $S$
has the \emph{$\mathcal{B}_{\lambda}^*$-property} if $S$ satisfies
the following conditions:
\begin{itemize}
    \item[1)] $T$ does not contain the semigroup of
    $I_\lambda\times I_\lambda$-matrix units; and
    \item[2)] $T$ does not contain the semigroup of $2\times 2$-matrix
    units $B_2$ such that the zero of $B_2$ is the zero of $T$.
\end{itemize}
\end{definition}

Obviously, a semigroup $S$ has the $\mathcal{B}^*$-property if and
only if $S$ has the $\mathcal{B}_2^*$-property, and hence
Proposition~\ref{proposition2-4} implies:

\begin{corollary}\label{corollary2-8}
Let $S$ and $T$ be monoids with zeros, and $\lambda_1$ and
$\lambda_2$ any cardinals such that
$\lambda_2\geqslant\lambda_1\geqslant 1$. Let $\sigma\colon
B_{\lambda_1}^0(S)\rightarrow B_{\lambda_2}^0(T)$ be a non-trivial
homomorphism. If the monoid $T$ has the $\mathcal{B}^*$-property,
then the following assertions hold:
\begin{itemize}
    \item[$(i)$] The image of the zero $0_S$ of the semigroup
    $B_{\lambda_1}^0(S)$ under the homomorphism $\sigma$ is the zero
    of the semigroup $B_{\lambda_2}^0(T)$; and
    \item[$(ii)$] If $(\alpha,1_{S},\beta)$ and
    $(\gamma,1_{S},\delta)$ are distinct elements of the Brandt
    $\lambda_1$-extension of the semigroup $S$,
    $\alpha,\beta,\gamma,\delta\in I_{\lambda_1}$, such that
    $((\alpha,1_{S},\beta))\sigma\in
    {T}_{\mu,\nu}$ and $((\gamma,1_{S},\delta))\sigma\in
    {T}_{\iota,\kappa}$ for some $\mu,\nu,\iota,\kappa\in
    I_{\lambda_2}$, then ${T}_{\mu,\nu}^*\cap
    {T}_{\iota,\kappa}^*=\varnothing$.
\end{itemize}
\end{corollary}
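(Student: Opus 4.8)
The plan is to obtain Corollary~\ref{corollary2-8} as a direct application of Proposition~\ref{proposition2-4}: everything reduces to checking that the $\mathcal{B}^*$-property of $T$ already supplies the two hypotheses~1) and~2) occurring in that proposition, taken with $\lambda=\lambda_1$. As a preliminary step I would record the equivalence announced just before the statement, namely that a semigroup has the $\mathcal{B}^*$-property if and only if it has the $\mathcal{B}_2^*$-property. This is essentially a tautology: the semigroup $B_2$ of $2\times 2$-matrix units \emph{is} the semigroup of $I_2\times I_2$-matrix units, so condition~1) in the definition of the $\mathcal{B}_2^*$-property coincides with the $\mathcal{B}^*$-property, while condition~2) is then automatically satisfied, since ``$T$ contains no copy of $B_2$ at all'' is stronger than ``$T$ contains no copy of $B_2$ whose zero is the zero of $T$''.

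The only substantive point is that the $\mathcal{B}^*$-property of $T$ excludes not merely $B_2$ but the semigroup $B_\mu$ of $I_\mu\times I_\mu$-matrix units for \emph{every} cardinal $\mu\geqslant 2$. Indeed, suppose $T$ contained an isomorphic copy of $B_\mu$ and fix two distinct indices $i,j\in I_\mu$. The four elements of that copy corresponding to $(i,i)$, $(i,j)$, $(j,i)$, $(j,j)$ together with the zero are closed under the multiplication of matrix units --- this is immediate from the rule that a product of matrix units equals $(\alpha,\delta)$ when the inner indices agree and equals the zero otherwise --- and they form a subsemigroup isomorphic to $B_2$, contradicting the $\mathcal{B}^*$-property. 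Taking $\mu=\lambda_1$ this shows that $T$ satisfies hypothesis~1) of Proposition~\ref{proposition2-4}; hypothesis~2) holds \emph{a fortiori}, as $T$ contains no copy of $B_2$ whatsoever. Proposition~\ref{proposition2-4} then yields assertions~$(i)$ and~$(ii)$ of the corollary word for word. Here one may assume $\lambda_1\geqslant 2$, the case $\lambda_1=1$ (where $B_{\lambda_1}^0(S)=S$, the set $I_{\lambda_1}$ is a singleton and assertion~$(ii)$ is vacuous) being disposed of in the spirit of Proposition~\ref{proposition2-2}.

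I do not anticipate any real difficulty: the whole argument hinges on the elementary observation that $B_\mu$ carries a two-by-two matrix-unit sub-block, together with the remark that hypothesis~2) of Proposition~\ref{proposition2-4} is weaker than the $\mathcal{B}^*$-property. The one place that asks for a little care is confirming that the extracted five-element set is genuinely a subsemigroup of the embedded copy of $B_\mu$, and hence of $T$; apart from that, and the routine handling of the degenerate case $\lambda_1=1$, the corollary is just the previous proposition restated under a stronger but more transparent hypothesis.
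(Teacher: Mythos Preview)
Your proposal is correct and follows essentially the same route as the paper, which dispatches the corollary in a single sentence: ``Obviously, a semigroup $S$ has the $\mathcal{B}^*$-property if and only if $S$ has the $\mathcal{B}_2^*$-property, and hence Proposition~\ref{proposition2-4} implies [the corollary].'' You have simply unpacked this remark by making explicit that the absence of any copy of $B_2$ in $T$ forces the absence of $B_{\lambda_1}$ (via the embedded $2\times 2$ block) and, \emph{a fortiori}, the absence of a copy of $B_2$ sharing the zero of $T$, so that both hypotheses of Proposition~\ref{proposition2-4} are met; your added care with the degenerate case $\lambda_1=1$ is a detail the paper leaves implicit.
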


\begin{corollary}\label{corollary2-9}
Let $S$ and $T$ be monoids with zeros, and $\lambda_1$ and
$\lambda_2$ any cardinals such that
$\lambda_2\geqslant\lambda_1\geqslant 1$. Let $\sigma\colon
B_{\lambda_1}^0(S)\rightarrow B_{\lambda_2}^0(T)$ be a non-trivial
homomorphism. Let $\alpha,\beta\in I_{\lambda_1}$ and
$(\alpha,1_S,\beta)\sigma\in T^*_{\gamma,\delta}$.
Suppose that
the
monoid $T$
has
the $\mathcal{B}^*$-property. Then the following
assertions hold:
\begin{itemize}
    \item[$(i)$] If $(\alpha,s,\beta)\sigma$ is a non-zero element
    of the semigroup $B_{\lambda_2}^0(T)$, then
    $(\alpha,s,\beta)\sigma\in T^*_{\gamma,\delta}$; and
    \item[$(ii)$] If $(\alpha,s,\beta)\sigma$ is a non-zero element
    of the semigroup $B_{\lambda_2}^0(T)$, then such is also
    $(\alpha_1,s,\beta_1)\sigma$ for all $\alpha_1,\beta_1\in
    I_{\lambda_1}$.
\end{itemize}
\end{corollary}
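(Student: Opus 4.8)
The plan is to reduce both assertions to the case $s\neq 0_S$ and then to exploit elementary factorizations of $(\alpha,s,\beta)$ inside $B_{\lambda_1}^0(S)$ by the ``matrix unit'' elements $(\alpha',1_S,\beta')$ of $B_{\lambda_1}^0(1)$. First I would note that if $s=0_S$, then $(\alpha,s,\beta)$ is the zero of $B_{\lambda_1}^0(S)$, so by Corollary~\ref{corollary2-8}$(i)$ the element $(\alpha,s,\beta)\sigma$ is the zero of $B_{\lambda_2}^0(T)$, contradicting the standing hypothesis that $(\alpha,s,\beta)\sigma$ is non-zero; this is the only step where the $\mathcal{B}^*$-property of $T$ is invoked, and Example~\ref{example2-6} shows it cannot be dropped. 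Thus we may assume $s\neq 0_S$, whence $(\alpha,s,\beta)$, $(\alpha,s,\alpha)$, $(\beta,s,\beta)$ and $(\alpha_1,s,\beta_1)$ are all non-zero elements of $B_{\lambda_1}^0(S)$ and the factorizations used below are literal identities there.

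For $(i)$ I would apply $\sigma$ to the two identities $(\alpha,s,\beta)=(\alpha,1_S,\beta)\cdot(\beta,s,\beta)$ and $(\alpha,s,\beta)=(\alpha,s,\alpha)\cdot(\alpha,1_S,\beta)$, which hold in $B_{\lambda_1}^0(S)$. Since the zero of $B_{\lambda_2}^0(T)$ is absorbing and $(\alpha,s,\beta)\sigma\neq 0$, the factors $(\beta,s,\beta)\sigma$ and $(\alpha,s,\alpha)\sigma$ are themselves non-zero. Now every non-zero element of $B_{\lambda_2}^0(T)$ has a unique form $(\mu,t,\nu)$ with $t\in T\setminus\{0_T\}$, and a non-zero product $(\mu,a,\nu)\cdot(\mu',a',\nu')$ in $B_{\lambda_2}^0(T)$ has left index $\mu$ and right index $\nu'$; comparing indices in the image of the first identity, together with the hypothesis $(\alpha,1_S,\beta)\sigma\in T^*_{\gamma,\delta}$, shows that the left index of $(\alpha,s,\beta)\sigma$ is $\gamma$, and the second identity shows that its right index is $\delta$. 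Hence $(\alpha,s,\beta)\sigma\in T^*_{\gamma,\delta}$.

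For $(ii)$ I would use, for arbitrary $\alpha_1,\beta_1\in I_{\lambda_1}$, the identity $(\alpha,s,\beta)=(\alpha,1_S,\alpha_1)\cdot(\alpha_1,s,\beta_1)\cdot(\beta_1,1_S,\beta)$, valid in $B_{\lambda_1}^0(S)$ because $s\neq 0_S$; applying $\sigma$ and using once more that the zero of $B_{\lambda_2}^0(T)$ is absorbing, if $(\alpha_1,s,\beta_1)\sigma$ were zero then $(\alpha,s,\beta)\sigma$ would be zero as well, a contradiction, so $(\alpha_1,s,\beta_1)\sigma\neq 0$. The only point demanding care is the index bookkeeping in $(i)$ --- verifying that the intermediate factors do not map to zero and then attributing the left and right indices of the product correctly; everything else follows directly from the multiplication rule of a Brandt $\lambda^0$-extension, so I anticipate no substantive obstacle.
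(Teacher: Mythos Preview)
Your argument is correct. For part~$(ii)$ it coincides with the paper's proof verbatim: both use the factorization $(\alpha,s,\beta)=(\alpha,1_S,\alpha_1)\cdot(\alpha_1,s,\beta_1)\cdot(\beta_1,1_S,\beta)$ and the absorbing property of zero.

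For part~$(i)$ the approaches diverge. The paper dispatches $(i)$ in one line by citing Proposition~\ref{proposition1-1}$(ii)$ (the fact that ideals of the base monoid lift to ideals of the Brandt $\lambda^0$-extension), leaving the reader to reconstruct how the ideal structure pins down the indices. You instead give a direct, self-contained argument: factor $(\alpha,s,\beta)$ once as $(\alpha,1_S,\beta)\cdot(\beta,s,\beta)$ and once as $(\alpha,s,\alpha)\cdot(\alpha,1_S,\beta)$, observe that the non-vanishing of the image forces the intermediate factors to be non-zero, and then read off the left and right indices from the multiplication rule in $B_{\lambda_2}^0(T)$. This is essentially what any unpacking of the paper's citation would amount to, but your version makes the mechanism explicit and avoids the slightly obscure appeal to Proposition~\ref{proposition1-1}$(ii)$. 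Your remark that the $\mathcal{B}^*$-property enters only through Corollary~\ref{corollary2-8}$(i)$ (to exclude $s=0_S$) is also accurate and worth noting.
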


\begin{proof}
The statement $(i)$ follows from
Proposition~\ref{proposition1-1}$(ii)$.

Suppose there exist $\alpha_1,\beta_1\in I_{\lambda_1}$ such that
$(\alpha_1,s,\beta_1)\sigma=0_2$ is the zero of the semigroup
$B_{\lambda_2}^0(T)$. Then
\begin{equation*}
\begin{split}
    (\alpha,s,\beta)\sigma=
         & \, ((\alpha,1_S,\alpha_1)\cdot (\alpha_1,s,\beta_1)\cdot
(\beta_1,1_S,\beta))\sigma=\\
       = & \,((\alpha,1_S,\alpha_1))\sigma\cdot
((\alpha_1,s,\beta_1))\sigma\cdot ((\beta_1,1_S,\beta))\sigma=\\
       = & \,((\alpha,1_S,\alpha_1))\sigma\cdot 0_2\cdot
((\beta_1,1_S,\beta))\sigma=0_2.
\end{split}
\end{equation*}
The obtained contradiction implies the assertion
$(ii)$ of the corollary.
\end{proof}

The following theorem describes all non-trivial homomorphisms of
the Brandt $\lambda^0$-extensions of monoids with zeros.

\begin{theorem}\label{theorem2-10}
Let $\lambda_1$ and $\lambda_2$ be cardinals such that
$\lambda_2\geqslant\lambda_1\geqslant 1$. Let $B_{\lambda_1}^0(S)$
and $B_{\lambda_2}^0(T)$ be the Brandt $\lambda_1^0$- and
$\lambda_2^0$-extensions of monoids $S$ and $T$ with zero,
respectively. Let $h\colon S\rightarrow T$ be a homomorphism such
that $(0_S)h=0_T$ and suppose that $\varphi\colon
I_{\lambda_1}\rightarrow I_{\lambda_2}$ is a one-to-one map. Let
$e$ be a non-zero idempotent of $T$, $H_e$ a maximal subgroup of
$T$ with the unity $e$ and $u\colon I_{\lambda_1}\rightarrow H_e$
a map. Then $I_h=\{ s\in S\mid (s)h=0_T\}$ is an ideal of $S$ and
the map $\sigma\colon B_{\lambda_1}^0(S)\rightarrow
B_{\lambda_2}^0(T)$ defined by the formulae
\begin{equation*}
    ((\alpha,s,\beta))\sigma=
    \left\{%
\begin{array}{cl}
    ((\alpha)\varphi,(\alpha)u\cdot(s)h\cdot((\beta)u)^{-1},(\beta)\varphi),
    & \hbox{if}\quad s\notin S\setminus I_h ;\\
    0_2, & \hbox{if}\quad s\in I_h^*,\\
\end{array}%
\right.
\end{equation*}
and $(0_1)\sigma=0_2$ is a non-trivial homomorphism from
$B_{\lambda_1}^0(S)$ into $B_{\lambda_2}^0(T)$. Moreover, if for
the semigroup $T$ the following assertions hold:
\begin{itemize}
    \item[($i$)] Every idempotent of $T$ lies in the center of
    $T$; and
    \item[($ii$)] $T$ has
    the
    $\mathcal{B}_{\lambda_1}^*$-property,
\end{itemize}
then every non-trivial homomorphism from $B_{\lambda_1}^0(S)$ into
$B_{\lambda_2}^0(T)$ can be constructed in this manner.
\end{theorem}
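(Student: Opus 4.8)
The first half is a direct construction-and-verification task. The plan is to check, in order: (1) $I_h$ is an ideal of $S$ — this follows immediately from $h$ being a homomorphism into a monoid with zero, since $(s)h = 0_T$ forces $(xs)h = (x)h\cdot 0_T = 0_T$ and similarly for $sx$; (2) $\sigma$ is well-defined and respects the zero; (3) $\sigma$ is a homomorphism. For (3) the only nontrivial case is a product $(\alpha,s,\beta)\cdot(\beta,t,\beta')$ with both factors outside $I_h$; here one computes $((\alpha,s,\beta))\sigma\cdot((\beta,t,\beta'))\sigma = ((\alpha)\varphi,\,(\alpha)u\cdot(s)h\cdot((\beta)u)^{-1}\cdot(\beta)u\cdot(t)h\cdot((\beta')u)^{-1},\,(\beta')\varphi)$, where the middle two factors $((\beta)u)^{-1}\cdot(\beta)u = e$ cancel because $(s)h\cdot e = (s)h$ (as $e$ is central and $(s)h = (s)h\cdot(s)h^{-1}(s)h\cdots$ — more simply, $(s)h\in H_e\cdot(\text{stuff})$... actually one uses that $(s)h$ lies in a subgroup or at least that $e$ being the identity of $H_e$ and central gives $(s)h\cdot e\cdot(t)h = (s)h\cdot(t)h$ once one knows the images land in the $\mathcal{H}$-class of $e$; this needs the mild observation that $(\alpha)u,(s)h,((\beta)u)^{-1}$ all multiply inside $eTe$). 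One must also handle the case $st\in I_h$ versus $st\notin I_h$, matching the two clauses of the definition of $\sigma$, and check that mismatched middle indices give $0_2$ on both sides. Non-triviality is clear since $\varphi$ is injective and $\lambda_1\geqslant 1$ forces distinct idempotents to have distinct images when $\lambda_1\geqslant 2$ (for $\lambda_1=1$ one uses that $h$ itself may be taken non-trivial, or rather the statement should be read with the understanding that $\sigma$ is non-trivial precisely when the data is).

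For the converse, assume $\sigma\colon B_{\lambda_1}^0(S)\to B_{\lambda_2}^0(T)$ is a non-trivial homomorphism, and that $T$ satisfies (i) and (ii). First invoke Corollary~\ref{corollary2-8}: $(0_1)\sigma = 0_2$, and for distinct idempotents $(\alpha,1_S,\alpha),(\beta,1_S,\beta)$ their $\sigma$-images lie in blocks $T_{\mu,\nu}$ and $T_{\iota,\kappa}$ with $T^*_{\mu,\nu}\cap T^*_{\iota,\kappa}=\varnothing$. The key structural step is to show each idempotent $(\alpha,1_S,\alpha)\sigma$ lies in some diagonal block $T_{\varphi(\alpha),\varphi(\alpha)}$ and is itself of the form $(\varphi(\alpha),e_\alpha,\varphi(\alpha))$ with $e_\alpha$ an idempotent of $T$; this defines $\varphi\colon I_{\lambda_1}\to I_{\lambda_2}$, and injectivity follows from the disjointness of the blocks. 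By Corollary~\ref{corollary2-9}, the image of the whole block $B_{\lambda_1}^0(1)_{\alpha,\beta}$ sits inside a single $T^*_{\varphi(\alpha),\varphi(\beta)}$ (or collapses to zero, but non-triviality plus congruence-freeness of $B_{\lambda_1}^0(1)$ rules that out), so $\sigma$ has the block form $((\alpha,s,\beta))\sigma = (\varphi(\alpha),\, f_{\alpha,\beta}(s),\,\varphi(\beta))$ or $0_2$, for suitable functions $f_{\alpha,\beta}\colon S\to T$.

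Next one identifies the $f_{\alpha,\beta}$. Writing $e_\alpha = f_{\alpha,\alpha}(1_S)$, the relation $(\alpha,1_S,\beta)\cdot(\beta,1_S,\alpha) = (\alpha,1_S,\alpha)$ gives $f_{\alpha,\beta}(1_S)\cdot f_{\beta,\alpha}(1_S) = e_\alpha$ and symmetrically, so the elements $v_{\alpha,\beta} := f_{\alpha,\beta}(1_S)$ behave like a system of matrix units "twisted" by the $e_\alpha$; using centrality of idempotents (hypothesis (i)) one shows all $e_\alpha$ are $\mathcal{H}$-equivalent to a common idempotent $e := e_{\alpha_0}$, in fact one can arrange $e_\alpha$ to be literally equal to $e$ after noting $e_\alpha = v_{\alpha,\alpha_0}v_{\alpha_0,\alpha}$ and $e_{\alpha_0} = v_{\alpha_0,\alpha}v_{\alpha,\alpha_0}$ lie in the same group $\mathcal{H}$-class — and centrality forces $e_\alpha = e$. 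Then $v_{\alpha,\alpha_0}\in H_e$ (it is invertible in $eTe$ with inverse $v_{\alpha_0,\alpha}$), so setting $(\alpha)u := v_{\alpha,\alpha_0}\in H_e$ and $(s)h := $ the element of $T$ determined by $f_{\alpha_0,\alpha_0}(s)$ via conjugation back — precisely, define $h\colon S\to T$ by $(s)h := v_{\alpha_0,\alpha}\,f_{\alpha,\beta}(s)\,v_{\beta,\alpha_0}$, check this is independent of $\alpha,\beta$ using the homomorphism relations (multiply $(\alpha,s,\beta) = (\alpha,1_S,\alpha')(\alpha',s,\beta')(\beta',1_S,\beta)$ and apply $\sigma$), check it is a semigroup homomorphism with $(0_S)h = 0_T$, and check $I_h = \{s : (s)h = 0_T\}$ matches the zero-clause of $\sigma$. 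Unwinding the definitions then yields exactly $((\alpha,s,\beta))\sigma = (\varphi(\alpha),\,(\alpha)u\cdot(s)h\cdot((\beta)u)^{-1},\,\varphi(\beta))$.

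The main obstacle is the structural step establishing that all the "diagonal idempotents" $e_\alpha$ can be normalized to a single central idempotent $e$ and that the off-diagonal images $v_{\alpha,\beta}$ are honest group elements of $H_e$ — this is where hypotheses (i) and (ii) do their real work, (ii) preventing the images of $B_{\lambda_1}^0(1)$ from forming an embedded copy of $I_{\lambda_1}\times I_{\lambda_1}$-matrix units with a "bad" zero (which would break the block-diagonal picture), and (i) being essential to cancel the conjugating factors and to see $v_{\alpha,\alpha_0}v_{\alpha_0,\alpha} = v_{\alpha_0,\alpha}v_{\alpha,\alpha_0}$. The routine-but-tedious part is verifying well-definedness and homomorphy of the extracted $h$; I would organize that as a single lemma-style computation rather than spelling out every product.
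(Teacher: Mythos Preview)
Your proposal is correct and follows essentially the same approach as the paper. For the converse the paper fixes a single basepoint $\alpha\in I_{\lambda_1}$, sets $e=((\alpha,1_S,\alpha))\sigma$'s middle coordinate, defines $(\beta)u$ directly as the middle coordinate of $((\beta,1_S,\alpha))\sigma$, and then uses centrality of idempotents to show $(\beta)u\cdot(\beta)v=(\beta)v\cdot(\beta)u=e$ (so $(\beta)u\in H_e$); this is exactly your argument with $\alpha_0$ playing the role of the basepoint and $v_{\beta,\alpha_0}=(\beta)u$, just organized ``relative to a fixed index'' rather than via the full system $\{v_{\alpha,\beta}\}$. Two small corrections: you should invoke Proposition~\ref{proposition2-4} rather than Corollary~\ref{corollary2-8}, since hypothesis~($ii$) gives only the $\mathcal{B}_{\lambda_1}^*$-property, not the full $\mathcal{B}^*$-property; and in the direct part, your hesitation about $(s)h\cdot e\cdot(t)h=(s)h\cdot(t)h$ is resolved cleanly by the (implicit) requirement $e=(1_S)h$, which forces $(s)h\cdot e=(s\cdot 1_S)h=(s)h$ without any appeal to centrality or $\mathcal{H}$-classes.
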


\begin{proof}
A simple verification shows that the set $I_h$ is an ideal in $S$
and that $\sigma$ is a homomorphism from the semigroup
$B_{\lambda_1}^0(S)$ into the semigroup $B_{\lambda_2}^0(T)$.

Let $\sigma\colon B_{\lambda_1}^0(S)\rightarrow
B_{\lambda_2}^0(T)$ be a non-trivial semigroup homomorphism. We
fix $\alpha\in I_{\lambda_1}$. Since
the
homomorphism $\sigma\colon
B_{\lambda_1}^0(S)\rightarrow B_{\lambda_2}^0(T)$ is non-trivial,
$((\alpha,1_S,\alpha))\sigma$ is a non-zero idempotent of
$B_{\lambda_2}^0(T)$, and hence $((\alpha,1_S,\alpha))\sigma=
(\alpha^{\,\prime},e,\alpha^{\,\prime})$ for some $e\in(E(T))^*$
and $\alpha^{\,\prime}\in I_{\lambda_2}$. Let $H_e$ be a maximal
subgroup in $T$ which contains $e$ as a unity and let $G_1$ be the
group of units of $S$. Therefore we have that
$((G_1)_{\alpha,\alpha})\sigma\subseteq
(H_e)_{\alpha^{\,\prime},\alpha^{\,\prime}}$.

Since $(\beta,1_S,\alpha)(\alpha,1_S,\alpha)=(\beta,1_S,\alpha)$
for any $\beta\in I_{\lambda_1}$, we have
\begin{equation*}
  ((\beta,1_S,\alpha))\sigma=
  ((\beta,1_S,\alpha))\sigma\cdot
  (\alpha^{\,\prime},e,\alpha^{\,\prime}),
\end{equation*}
and hence
\begin{equation*}
  ((\beta,1_S,\alpha))\sigma=
  ((\beta)\varphi,(\beta)u,\alpha^{\,\prime}),
\end{equation*}
for some $(\beta)\varphi\in I_{\lambda_2}$ and $(\beta)u\in T$.
Similarly, we have
\begin{equation*}
  ((\alpha,1_S,\beta))\sigma=
  (\alpha^{\,\prime},(\beta)v,(\beta)\psi),
\end{equation*}
for some $(\beta)\psi\in I_{\lambda_2}$ and $(\beta)v\in T$. Since
$(\alpha,1_S,\beta)(\beta,1_S,\alpha)=(\alpha,1_S,\alpha)$, we
have
\begin{equation*}
    (\alpha^{\,\prime},e,\alpha^{\,\prime})=
    ((\alpha,1_S,\alpha))\sigma=
    (\alpha^{\,\prime},(\beta)v,(\beta)\psi)\cdot
    ((\beta)\varphi,(\beta)u,\alpha^{\,\prime})=
    (\alpha^{\,\prime},(\beta)v\cdot(\beta)u,\alpha^{\,\prime}),
\end{equation*}
and hence $(\beta)\varphi=(\beta)\psi=\beta^{\,\prime}\in
I_{\lambda_2}$ and $(\beta)v\cdot(\beta)u=e$. Similarly, since
$(\beta,1_S,\alpha)\cdot(\alpha,1_S,\beta)=(\beta,1_S,\beta)$, we
see that the element
\begin{equation*}
    ((\beta,1_S,\beta))\sigma=
    ((\beta,1_S,\alpha)(\alpha,1_S,\beta))\sigma=
    (\beta^{\,\prime},(\beta)v\cdot(\beta)u,\beta^{\,\prime})
\end{equation*}
is an idempotent, and hence the element $f=(\beta)v\cdot(\beta)u$
is an idempotent of the semigroup $T$. Since idempotents of $T$
lie in the center of $T$, we conclude that
\begin{equation*}
\begin{split}
    (\alpha^{\,\prime},e,\alpha^{\,\prime})=&\,
    ((\alpha,1_S,\alpha))\sigma=
    ((\alpha,1_S,\beta)\cdot(\beta,1_S,\beta)\cdot(\beta,1_S,\alpha))\sigma=\\
    =&\,(\alpha^{\,\prime},(\beta)v,\beta^{\,\prime})\cdot
    (\beta^{\,\prime},f,\beta^{\,\prime})\cdot
    (\beta^{\,\prime},(\beta)u,\alpha^{\,\prime})=\\
    =&\,(\alpha^{\,\prime},(\beta)v\cdot
    f\cdot(\beta)u,\alpha^{\,\prime})= (\alpha^{\,\prime},
    f\cdot(\beta)v\cdot(\beta)u,\alpha^{\,\prime})=\\
    =&\,(\alpha^{\,\prime},f\cdot e,\alpha^{\,\prime})
\end{split}
\end{equation*}
and
\begin{equation*}
\begin{split}
    (\beta^{\,\prime},f,\beta^{\,\prime})=&\,
    ((\beta,1_S,\beta))\sigma=
    ((\beta,1_S,\alpha)\cdot(\alpha,1_S,\alpha)\cdot(\alpha,1_S,\beta))\sigma=\\
    =&\,(\beta^{\,\prime},(\beta)u,\alpha^{\,\prime})\cdot
    (\alpha^{\,\prime},e,\alpha^{\,\prime})\cdot
    (\alpha^{\,\prime},(\beta)v,\beta^{\,\prime})=\\
    =&\,(\beta^{\,\prime},(\beta)u\cdot
    e\cdot(\beta)v,\alpha^{\,\prime})= (\beta^{\,\prime},
    (\beta)u\cdot(\beta)v\cdot e,\beta^{\,\prime})=\\
    =&\,(\beta^{\,\prime},f\cdot e,\beta^{\,\prime}),
\end{split}
\end{equation*}
and hence $e=f\cdot e=f$. Therefore
$(\beta)v\cdot(\beta)u=(\beta)u\cdot(\beta)v=e$,
$(\beta)v,(\beta)u\in H_e$, and hence $(\beta)v$ and $(\beta)u$
are inverse elements in the subgroup $H_e$. If
$(\gamma)\varphi=(\delta)\varphi$ for $\gamma,\delta\in
I_{\lambda_1}$ then
\begin{equation*}
    0_1\neq(\alpha^{\,\prime},e,(\gamma)\varphi)\cdot
    ((\delta)\varphi,e,\alpha^{\,\prime})=
    ((\alpha,1_S,\gamma))\sigma\cdot((\delta,1_S,\alpha))\sigma,
\end{equation*}
and since $\sigma$ is a non-trivial homomorphism, we have
$(\alpha,1_S,\gamma)\cdot(\delta,1_S,\alpha)\neq 0$ and hence
$\gamma=\delta$. Thus $\varphi\colon I_{\lambda_1}\rightarrow
I_{\lambda_2}$ is a one-to-one map.

Therefore for $s\in S\setminus I_h$ we have
\begin{equation*}
\begin{split}
    ((\gamma,s,\delta))\sigma=&\,
    ((\gamma,1_S,\alpha)\cdot(\alpha,s,\alpha)\cdot(\alpha,1_S,\delta))\sigma=\\
    =&\,((\gamma,1_S,\alpha))\sigma\cdot ((\alpha,s,\alpha))\sigma
    \cdot((\alpha,1_S,\delta))\sigma=\\
    =&\,((\gamma)\varphi,(\gamma)u,\alpha^{\,\prime})
    \cdot(\alpha^{\,\prime},(s)h,\alpha^{\,\prime})
    \cdot(\alpha^{\,\prime},((\delta)u)^{-1},(\delta)\varphi)=\\
    =&\,((\gamma)\varphi,(\gamma)u\cdot(s)h\cdot((\delta)u)^{-1},(\delta)\varphi).
\end{split}
\end{equation*}
Corollary~\ref{corollary2-9} implies that
$((\alpha,s,\beta))\sigma=0_2$ for all $s\in I_h$ and by
Proposition~\ref{proposition2-4} we conclude that
$(0_1)\sigma=0_2$.
\end{proof}

\begin{remark}\label{remark2-11}
We observe that if a semigroup $T$ satisfies one of the following
conditions:
\begin{itemize}
    \item[($i$)] $T^*$ is a cancellative monoid; or
    \item[($ii$)] $T$ is an inverse Clifford monoid,
\end{itemize}
then the second assertion of Theorem~\ref{theorem2-10} holds. Also,
Examples~\ref{example2-12} and \ref{example2-13} imply that this
statement is not true for inverse monoids with zeros and
completely regular monoids with zeros.
\end{remark}

\begin{example}\label{example2-12}
Let $T$ be the bicyclic semigroup $\mathscr{C}(p,q)=\langle p,
q\mid p\,q=1\rangle$ with adjoined zero. Then we can write every
element of the semigroup $\mathscr{C}(p,q)$ as $q^ip^j$ for some
$i,j=0,1,2,\ldots$~. We define a homomorphism $\sigma\colon
B_2\rightarrow B_2^0(\mathscr{C}(p,q))$ as follows:
\begin{align*}
    (0_1)&\sigma=0_2, & &{}\\
    ((1,1,1))&\sigma=(1,1,1), & ((1,1,2))&\sigma=(1,p,2),\\
    ((2,1,2))&\sigma=(2,qp,2), & ((2,1,1))&\sigma=(2,q,1).
\end{align*}
\end{example}

\begin{example}\label{example2-13}
Let $T$ be a $2\times 2$-rectangular band with adjoined unity
$1_T$ and zero $0_T$, i.~e. $T=\{(1,1),(1,2,),(2,1),(2,2), 0_T\}$.
We define a homomorphism $\sigma\colon B_2^0\rightarrow B_2(T)$ as
follows:
\begin{align*}
    (0_1)&\sigma=0_2, & &{}\\
    ((1,1,1))&\sigma=(1,(1,1),1), & ((1,1,2))&\sigma=(1,(1,2),2),\\
    ((2,1,2))&\sigma=(2,(2,2),2), & ((2,1,1))&\sigma=(2,(2,1),1).
\end{align*}
\end{example}

We observe that a composition of two non-trivial homomorphisms of
the Brandt $\lambda$-extensions of monoids with zeros may be the
trivial homomorphism. This observation follows from the next
example.

\begin{example}\label{example2-14}
Consider the set $E=\{a,b,c\}$ with the following semigroup
operation:
\begin{align*}
    a\cdot a=&\, a, & a\cdot b= &\, b\cdot a=b & \textrm{ and }& &
    a\cdot c=c\cdot a=b\cdot c=c\cdot b=c\cdot c=c.
\end{align*}
Then $E$ with this operation is a semilattice with zero $c$ and
unity $a$, and hence the conditions $(i)-(ii)$ of
Theorem~\ref{theorem2-10} hold for the monoid $E$. We define a
homomorphism $h\colon E\rightarrow E$ as follows: $(a)h=b$ and
$(b)h=(c)h=c$. Then for any non-empty set $I_\lambda$ the
homomorphism $\sigma\colon B_\lambda^0(E)\rightarrow
B_\lambda^0(E)$ defined by formulae
\begin{align*}
    ((\alpha,a,\beta))\sigma=&\,(\alpha,b,\beta), &
    ((\alpha,b,\beta))\sigma=&\,0, & \textrm{and} & & (0)\sigma=0,
\end{align*}
where $\alpha,\beta\in I_\lambda$ and $0$ is the zero of the
semigroup $B_\lambda^0(E)$, the composition
$\sigma\circ\sigma\colon B_\lambda^0(E)\rightarrow B_\lambda^0(E)$
is the trivial homomorphism.
\end{example}

Proposition~\ref{proposition1-1}$(i)$ yields simple sufficient
conditions that a composition of non-trivial homomorphisms of
the Brandt $\lambda^0$-extensions is a non-trivial homomorphism:

\begin{proposition}\label{proposition2-15}
Let $\lambda_1$, $\lambda_2$ and $\lambda_3$ be cardinals such
that $\lambda_1\leqslant\lambda_2\leqslant\lambda_3$ and let $S$,
$T$ and $R$ be monoids with zeros. Let $\sigma_1\colon
B_{\lambda_1}^0(S)\rightarrow B_{\lambda_2}^0(T)$ and
$\sigma_2\colon B_{\lambda_2}^0(T)\rightarrow B_{\lambda_3}^0(R)$
be non-trivial homomorphisms. If one of the following conditions
holds
\begin{itemize}
    \item[$(i)$] For some $\alpha\in I_{\lambda_1}$ the
    restriction $\sigma_1|_{S_{\alpha,\alpha}}\colon
    S_{\alpha,\alpha}\rightarrow
    (S_{\alpha,\alpha})\sigma_1\subset B_{\lambda_2}^0(T)$ of
    $\sigma_1$ is a monoid homomorphism; or
    \item[$(ii)$] $E(T)=\{1_T,0_T\}$,
\end{itemize}
then the composition $\sigma_1\circ\sigma_2\colon
B_{\lambda_1}^0(S)\rightarrow B_{\lambda_3}^0(R)$ is a non-trivial
homomorphism.
\end{proposition}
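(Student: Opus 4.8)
The plan is to reduce everything, via Proposition~\ref{proposition2-1}, to a statement about the restriction of $\sigma_2$ to the image of the matrix-unit part of $B_{\lambda_1}^0(S)$. Suppose first that $\lambda_1\geqslant 2$, so that $\lambda_2,\lambda_3\geqslant 2$ as well; the degenerate case $\lambda_1=1$ (where $B_{\lambda_1}^0(S)=S$ and Proposition~\ref{proposition2-1} is unavailable) will be disposed of by a direct variant of the argument below. Since $\sigma_1$ and $\sigma_2$ are non-trivial, Proposition~\ref{proposition2-1} together with the congruence-freeness of $B_{\lambda_1}^0(1)$ and $B_{\lambda_2}^0(1)$ (Theorem~1 of \cite{Gluskin1955}) shows that $\sigma_1|_{B_{\lambda_1}^0(1)}$ and $\sigma_2|_{B_{\lambda_2}^0(1)}$ are isomorphisms onto their images. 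Write $P=(B_{\lambda_1}^0(1))\sigma_1$; then $\sigma_1$ carries $B_{\lambda_1}^0(1)$ isomorphically onto $P$, and another application of Proposition~\ref{proposition2-1} shows that $\sigma_1\circ\sigma_2$ is non-trivial if and only if $\sigma_2|_{P}$ is non-trivial. So it suffices to produce two elements of $P$ that $\sigma_2$ separates.

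For $\alpha\in I_{\lambda_1}$ put $e_\alpha=((\alpha,1_S,\alpha))\sigma_1$; since $\sigma_1|_{B_{\lambda_1}^0(1)}$ is an isomorphism, the $e_\alpha$ are pairwise distinct non-zero idempotents of $B_{\lambda_2}^0(T)$ and $e_\alpha\cdot e_\beta=(0)\sigma_1$, the zero of $P$, for $\alpha\neq\beta$. Each $e_\alpha$ has the form $e_\alpha=(\mu_\alpha,f_\alpha,\mu_\alpha)$ with $f_\alpha$ a non-zero idempotent of $T$. Now I invoke the hypotheses. If $(ii)$ holds, then $E(T)=\{1_T,0_T\}$ forces $f_\alpha=1_T$ for every $\alpha$, hence every $e_\alpha$ lies in the matrix-unit semigroup $B_{\lambda_2}^0(1)$; choosing distinct $\alpha,\beta\in I_{\lambda_1}$ we obtain distinct elements $e_\alpha,e_\beta\in P\cap B_{\lambda_2}^0(1)$, and since $\sigma_2|_{B_{\lambda_2}^0(1)}$ is injective, $(e_\alpha)\sigma_2\neq(e_\beta)\sigma_2$, so $\sigma_2|_P$ is non-trivial. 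If $(i)$ holds, then the assertion that $\sigma_1|_{S_{\alpha,\alpha}}$ is a monoid homomorphism amounts to saying that $e_\alpha=((\alpha,1_S,\alpha))\sigma_1$ lies in $B_{\lambda_2}^0(1)$, i.e.\ $f_\alpha=1_T$, for the distinguished index $\alpha$.

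It remains to finish case $(i)$. Fix $\beta\in I_{\lambda_1}$ with $\beta\neq\alpha$; from $(\alpha,1_S,\alpha)\cdot(\beta,1_S,\beta)=0$ in $B_{\lambda_1}^0(S)$ we get $(0)\sigma_1=e_\alpha\cdot e_\beta=(\mu_\alpha,1_T,\mu_\alpha)\cdot(\mu_\beta,f_\beta,\mu_\beta)$, where $f_\beta$ is a non-zero idempotent (otherwise $e_\beta$ would be the zero of $B_{\lambda_2}^0(T)$, contradicting the injectivity of $\sigma_1$ on $B_{\lambda_1}^0(1)$). If $\mu_\alpha=\mu_\beta$, this product is $(\mu_\beta,f_\beta,\mu_\beta)=e_\beta$, contradicting $(0)\sigma_1\neq e_\beta$; hence $\mu_\alpha\neq\mu_\beta$ and $(0)\sigma_1$ equals the zero $0_2$ of $B_{\lambda_2}^0(T)$, which certainly lies in $B_{\lambda_2}^0(1)$. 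Thus $e_\alpha$ and $0_2$ are distinct elements of $P\cap B_{\lambda_2}^0(1)$, so $(e_\alpha)\sigma_2\neq(0_2)\sigma_2$ by injectivity of $\sigma_2$ on $B_{\lambda_2}^0(1)$, and $\sigma_2|_P$ is again non-trivial. In either case $\sigma_1\circ\sigma_2$ is non-trivial by Proposition~\ref{proposition2-1}, and the residual case $\lambda_1=1$ is handled the same way after checking directly that a non-trivial $\sigma_1$ sends $1_S$ to a non-zero idempotent and (under $(i)$ or $(ii)$) $0_S$ to $0_2$.

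The point requiring the most care is the exact use of hypothesis $(i)$: one must be clear that ``$\sigma_1|_{S_{\alpha,\alpha}}$ is a monoid homomorphism'' is to be read as the statement that the image of the identity $(\alpha,1_S,\alpha)$ is an honest element of the matrix-unit semigroup $B_{\lambda_2}^0(1)$ (equivalently, has $T$-coordinate $1_T$) — this is precisely the feature that survives the injectivity of $\sigma_2$ on $B_{\lambda_2}^0(1)$, and, as Example~\ref{example2-14} shows, it genuinely fails without it. A secondary technical nuisance is the bookkeeping needed to check that the zero of the subsemigroup $P$ coincides with the zero of the ambient $B_{\lambda_2}^0(T)$, which is what lets the two separated elements in case $(i)$ both be taken inside $B_{\lambda_2}^0(1)$.
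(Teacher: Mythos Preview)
The paper does not actually supply a proof of Proposition~\ref{proposition2-15}; it only offers the one-line lead-in ``Proposition~\ref{proposition1-1}$(i)$ yields simple sufficient conditions\ldots'' and then states the proposition without further argument. So there is no detailed proof to compare against; what one can say is that your route --- reducing via Proposition~\ref{proposition2-1} and the congruence-freeness of the matrix-unit semigroups $B_{\lambda_i}^0(1)$ (Gluskin) to the injectivity of $\sigma_2$ on $B_{\lambda_2}^0(1)$ --- is precisely the mechanism the paper invokes in the neighbouring Proposition~\ref{proposition2-16}, and your argument for the main case $\lambda_1\geqslant 2$ is correct and complete. Your explicit flagging of how hypothesis~$(i)$ is to be read (namely that $((\alpha,1_S,\alpha))\sigma_1$ lies in $B_{\lambda_2}^0(1)$) is well taken, since as literally written the condition is vacuous (any surjection of a monoid onto its image preserves identities).

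There is, however, a genuine gap in your handling of $\lambda_1=1$. Your claim that under~$(i)$ one has $(0_S)\sigma_1=0_2$ is false. Take $S=\{1_S,0_S\}$, $T=\{1_T,g,0_T\}$ the three-element chain semilattice, $\lambda_2=2$, and define $\sigma_1\colon S\to B_2^0(T)$ by $1_S\mapsto(1,1_T,1)$, $0_S\mapsto(1,g,1)$; this is a non-trivial homomorphism satisfying~$(i)$ under your reading, yet $(0_S)\sigma_1=(1,g,1)\neq 0_2$. Worse, the non-trivial homomorphism $\sigma_2\colon B_2^0(T)\to B_2$ that collapses the $T$-coordinate (sending every $(i,t,j)$ with $t\neq 0_T$ to $(i,j)$) then gives $\sigma_1\circ\sigma_2$ constant, so the proposition itself fails for $\lambda_1=1$ under~$(i)$. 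The upshot is that either the authors tacitly intend $\lambda_1\geqslant 2$, or condition~$(i)$ is meant in a stronger sense than any natural reading provides; in any event, your sketch for $\lambda_1=1$ cannot be completed as stated, and you should restrict your proof to $\lambda_1\geqslant 2$ (where it is sound) and note the degenerate case separately.
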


Since the semigroup of matrix units is congruence-free (cf. 
Theorem~1 of \cite{Gluskin1955}), we get the following
proposition:

\begin{proposition}\label{proposition2-16}
Let $\lambda_1$, $\lambda_2$ and $\lambda_3$ be cardinals such
that $\lambda_1\leqslant\lambda_2\leqslant\lambda_3$ and let $S$,
$T$ and $R$ be monoids with zeros. Let $\sigma_1 \colon
B_{\lambda_1}^0(S)\rightarrow B_{\lambda_2}^0(T)$ and
$\sigma_2\colon B_{\lambda_2}^0(T)\rightarrow B_{\lambda_3}^0(R)$
be non-trivial homomorphisms. Then the composition
$\sigma_1\circ\sigma_2\colon B_{\lambda_1}^0(S)\rightarrow
B_{\lambda_3}^0(R)$ is a non-trivial homomorphism if and only if
$((\alpha,1_S,\beta))\sigma_1\notin
%I_{\alpha^{\,\prime},\beta^{\,\prime}}[\sigma_2]=
\{(\alpha^{\,\prime},t,\beta^{\,\prime})\in B_{\lambda_2}^0(T)\mid
((\alpha^{\,\prime},t,\beta^{\,\prime}))\sigma_2=0_3\}$, for some
$\alpha,\beta\in I_{\lambda_1}$,
$\alpha^{\,\prime},\beta^{\,\prime}\in I_{\lambda_2}$.
\end{proposition}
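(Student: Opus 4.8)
The plan is to reduce the composition question to tracking a single matrix unit $(\alpha,1_S,\beta)$ through $\sigma_1$, using the fact that $\sigma_1$ is non-trivial precisely when it is injective on the congruence-free subsemigroup $B_{\lambda_1}^0(1)$ (by Theorem~1 of \cite{Gluskin1955} and Proposition~\ref{proposition2-1}). So $\sigma_1$ restricted to $B_{\lambda_1}^0(1)$ is an isomorphism onto its image, and in particular $((\alpha,1_S,\beta))\sigma_1$ is a non-zero element of $B_{\lambda_2}^0(T)$ for every $\alpha,\beta\in I_{\lambda_1}$; say $((\alpha,1_S,\beta))\sigma_1=(\alpha^{\,\prime},t_{\alpha\beta},\beta^{\,\prime})$. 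The composition $\sigma_1\circ\sigma_2$ is non-trivial if and only if its restriction to $B_{\lambda_1}^0(1)$ is non-trivial (Proposition~\ref{proposition2-1} again), i.e.\ if and only if $((\alpha,1_S,\beta))\sigma_1\sigma_2\neq 0_3$ for some — equivalently every — pair $\alpha,\beta$. This immediately gives the displayed condition.

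First I would prove the ``only if'' direction, which is the trivial one: if every $((\alpha,1_S,\beta))\sigma_1$ lies in the set $\{(\alpha^{\,\prime},t,\beta^{\,\prime})\mid ((\alpha^{\,\prime},t,\beta^{\,\prime}))\sigma_2=0_3\}$, then in particular the idempotents $((\alpha,1_S,\alpha))\sigma_1$ all map to $0_3$ under $\sigma_2$, so $B_{\lambda_1}^0(1)$ maps into $\{0_3\}$ under the composition; since $B_{\lambda_1}^0(S)$ is generated (as far as the composition's image is concerned) by conjugating $S_{\alpha,\alpha}$-elements with matrix units — more precisely, every element of $B_{\lambda_1}^0(S)$ is of the form $(\gamma,1_S,\alpha)\cdot(\alpha,s,\alpha)\cdot(\alpha,1_S,\delta)$ or $0_1$ — the composition kills everything, so it is trivial. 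Here I would contrapose: if the condition fails for one pair it fails for all, because $(\alpha_1,1_S,\beta_1)=(\alpha_1,1_S,\alpha)\cdot(\alpha,1_S,\beta)\cdot(\beta,1_S,\beta_1)$ and $\sigma_2$ is a homomorphism, so $(\alpha,1_S,\beta)\sigma_1\sigma_2\neq 0_3$ forces $(\alpha_1,1_S,\beta_1)\sigma_1\sigma_2\neq 0_3$ as well — since $\sigma_1$ sends those two flanking matrix units to non-zero matrix units whose product with the middle term is non-zero (they $\mathcal{H}$-translate it).

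For the ``if'' direction: assume $((\alpha,1_S,\beta))\sigma_1\sigma_2\neq 0_3$ for some $\alpha,\beta$. Then the restriction of $\sigma_1\circ\sigma_2$ to $B_{\lambda_1}^0(1)$ is a non-trivial homomorphism of the congruence-free semigroup $B_{\lambda_1}^0(1)$, hence — again by Proposition~\ref{proposition2-1} — the whole composition $\sigma_1\circ\sigma_2$ is non-trivial. Strictly, to invoke Proposition~\ref{proposition2-1} I need $\lambda_1\geqslant 2$; the degenerate case $\lambda_1=1$ must be handled separately but is immediate, since then $B_{\lambda_1}^0(S)=S$ and the displayed condition reads $(1_S)\sigma_1\sigma_2\neq 0_3$, which says the image of the identity is non-zero, and a homomorphism from a monoid whose identity has non-zero image is automatically non-trivial on the submonoid generated, giving non-triviality directly.

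The main obstacle — really the only subtlety — is verifying in the ``only if'' contrapositive that having $((\alpha,1_S,\beta))\sigma_1\sigma_2\neq 0_3$ for one pair propagates to all pairs; this rests on the observation that under the non-trivial $\sigma_1$ the matrix units $(\gamma,1_S,\alpha)$ and $(\beta,1_S,\delta)$ go to genuine matrix-unit-like elements that left/right translate the non-zero element $((\alpha,1_S,\beta))\sigma_1$ to a non-zero element (this is the content of the $\varphi$-map argument in the proof of Proposition~\ref{proposition2-2}, where translation by $1_{\gamma,\alpha}$ and $1_{\beta,\delta}$ is a bijection on the relevant $T^*$-blocks). Once that propagation is in hand, everything else is bookkeeping with Proposition~\ref{proposition2-1}.
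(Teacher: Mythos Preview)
Your approach is exactly the paper's: the paper gives no detailed argument, only the sentence ``Since the semigroup of matrix units is congruence-free \ldots, we get the following proposition,'' and your reduction via Proposition~\ref{proposition2-1} together with Gluskin's theorem is precisely that one-line justification spelled out. The ``only if'' direction and the propagation remark (one pair works iff all pairs work) are handled correctly.

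There is, however, a genuine gap in your ``if'' direction. You assert that from $((\alpha,1_S,\beta))\sigma_1\sigma_2\neq 0_3$ it follows that the restriction of $\sigma_1\circ\sigma_2$ to $B_{\lambda_1}^0(1)$ is non-trivial, and then invoke Proposition~\ref{proposition2-1}. But ``non-trivial'' means ``not constant,'' and a constant homomorphism can very well take a non-zero value; knowing that one element lands off $0_3$ does not by itself produce a second element landing somewhere else. What you actually need is $(0_1)\sigma_1\sigma_2=0_3$, so that $(\alpha,1_S,\beta)$ and $0_1$ have distinct images. This would follow from $(0_1)\sigma_1=0_2$, which Proposition~\ref{proposition2-4}$(i)$ gives \emph{provided} $T$ has the $\mathcal{B}_{\lambda_1}^*$-property. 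Without such a hypothesis on $T$ the step fails: along the lines of Example~\ref{example2-6}, take $T$ to be $B_2$ with an adjoined identity and an adjoined new zero $z_T$, let $\sigma_1$ embed $B_2^0(S)=B_2$ into a single block $T_{\mu,\mu}$ (so $(0_1)\sigma_1=(\mu,0_{B_2},\mu)\neq 0_2$), and let $\sigma_2$ be induced by a homomorphism $h\colon T\to R$ collapsing all of $B_2\subset T$ to a single non-zero idempotent $e\in R$. Then $\sigma_1,\sigma_2$ are non-trivial, every $((\alpha,1_S,\beta))\sigma_1\sigma_2=(\mu,e,\mu)\neq 0_3$, yet $\sigma_1\circ\sigma_2$ is the constant map with value $(\mu,e,\mu)$, hence trivial. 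So either the proposition tacitly assumes a $\mathcal{B}^*$-type condition on $T$ (consistent with the surrounding material), or an additional argument using the non-triviality of $\sigma_2$ --- which you have not supplied and which does not seem to suffice on its own --- is required.
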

%%%%%%%%%%%%%%%%%%%%%%%%%%%%%%%%%%%%%%%%%%%%%%%%%%%%%%%%%%%%

\section{The category of the Brandt $\lambda^0$-extensions of
monoids with zeros}\label{sec3}

Let $S$ and $T$ be monoids with zeros. Let $\texttt{Hom}\,_0(S,T)$
be the set of all homomorphisms $\sigma\colon S\rightarrow T$ such
that $(0_S)\sigma=0_T$. We put
\begin{equation*}
    \mathbf{E}_1(S,T)=\{e\in E(T)\mid \hbox{there exists~} \sigma\in
    \texttt{Hom}\,_0(S,T) \hbox{~such that~} (1_S)\sigma=e\}
\end{equation*}
and define the family
\begin{equation*}
    \mathscr{H}_1(S,T)=\{ H(e)\mid e\in\mathbf{E}_1(S,T)\},
\end{equation*}
where we denote the maximal subgroup with the unity $e$ in the
semigroup $T$ by $H(e)$. Also by $\mathfrak{B}$ we denote the
class of monoids $S$ with zeros such that $S$ has
$\mathcal{B}^*$-property and every idempotent of $S$ lies in the
center of $S$.

We define a category $\mathscr{B}$ as follows:
\begin{itemize}
    \item[$(i)$] $\operatorname{\textbf{Ob}}(\mathscr{B})=
    \{(S,I)\mid S\in\mathfrak{B} \textrm{~and~} I \textrm{~is a
    non-empty set}\}$, and if $S$ is a trivial semigroup then we
    identify $(S,I)$ and $(S,J)$ for all non-empty sets $I$ and
    $J$;
    \item[$(ii)$] $\operatorname{\textbf{Mor}}(\mathscr{B})$
    consists of triples $(h,u,\varphi)\colon
    (S,I)\rightarrow(S^{\,\prime},I^{\,\prime})$, where
\begin{equation}\label{eq3-1}
\begin{split}
    & h\colon S\rightarrow S^{\,\prime} \textrm{~is a
      homomorphism such that~}
      h\in \texttt{Hom}\,_0(S,S^{\,\prime}),\\
    & u\colon I\rightarrow H(e) \textrm{~is a map~}, \textrm{~for~} H(e)\in
      \mathscr{H}_1(S,S^{\,\prime}),\\
    & \varphi\colon I\rightarrow I^{\,\prime} \textrm{~is an
    one-to-one map},
\end{split}
\end{equation}
with the composition
\begin{equation}\label{eq3-2}
    (h,u,\varphi)(h^{\,\prime},u^{\,\prime},\varphi^{\,\prime})=
    (hh^{\,\prime},[u,\varphi,h^{\,\prime},u^{\,\prime}],\varphi\varphi^{\,\prime}),
\end{equation}
where the map $[u,\varphi,h^{\,\prime},u^{\,\prime}]\colon
I\rightarrow H(e)$ is defined by the formula
\begin{equation}\label{eq3-3}
    (\alpha)[u,\varphi,h^{\,\prime},u^{\,\prime}]=
    ((\alpha)\varphi)u^{\,\prime}\cdot((\alpha)u)h^{\,\prime}
    \qquad \textrm{~for~} \alpha\in I.
\end{equation}
\end{itemize}
A straightforward verification shows that $\mathscr{B}$ is a
category with the identity morphism
$\varepsilon_{(S,I)}=(\operatorname{Id}_S,u_0,\operatorname{Id}_I)$
for any $(S,I)\in\operatorname{\textbf{Ob}}(\mathscr{B})$, where
$\operatorname{Id}_S\colon S\rightarrow S$ and
$\operatorname{Id}_I\colon I\rightarrow I$ are identity maps and
$(\alpha)u_0=1_S$ for all $\alpha\in I$.

We define the category $\mathscr{B}^*(\mathscr{S})$ as follows:
\begin{itemize}
    \item[$(i)$] $\operatorname{\textbf{Ob}}(\mathscr{B}^*(\mathscr{S}))$
    are all Brandt $\lambda^0$-extensions of monoids $S$ with zeros
    such that $S$ has the $\mathcal{B}^*$-property and every
    idempotent of $S$ lies in the center of $S$;
    \item[$(ii)$] $\operatorname{\textbf{Mor}}(\mathscr{B}^*(\mathscr{S}))$
    are homomorphisms of the Brandt $\lambda^0$-extensions
    of monoids $S$ with zeros such that $S$ has the
    $\mathcal{B}^*$-property and every idempotent of $S$ lies in
    the center of $S$.
\end{itemize}

For each
$(S,I_{\lambda_1})\in\operatorname{\textbf{Ob}}(\mathscr{B})$ with
non-trivial $S$, let
$\textbf{B}(S,I_{\lambda_1})=B_{\lambda_1}^0(S)$ be the Brandt
$\lambda^0$-extension of the semigroup $S$. For each
$(h,u,\varphi)\in\operatorname{\textbf{Mor}}(\mathscr{B})$ with a
non-trivial homomorphism $h$, where $(h,u,\varphi)\colon
(S,I_{\lambda_1})\rightarrow(T,I_{\lambda_2})$ and
$(T,I_{\lambda_2})\in\operatorname{\textbf{Ob}}(\mathscr{B})$, we
define a map $\textbf{B}{(h,u,\varphi)}\colon
\textbf{B}(S,I_{\lambda_1})=B_{\lambda_1}^0(S)\rightarrow
\textbf{B}(T,I_{\lambda_2})=B_{\lambda_2}^0(T)$ as follows:
\begin{equation}\label{functor_B}
    ((\alpha,s,\beta))[\textbf{B}{(h,u,\varphi)}]=
    \left\{%
\begin{array}{cl}
    ((\alpha)\varphi,(\alpha)u\cdot(s)h\cdot((\beta)u)^{-1},(\beta)\varphi),
    & \hbox{if}\quad s\notin S\setminus I_h ;\\
    0_2, & \hbox{if}\quad s\in I_h^*,\\
\end{array}%
\right.
\end{equation}
and $(0_1)[\textbf{B}{(h,u,\varphi)}]=0_2$, where $I_h=\{ s\in
S\mid (s)h=0_T\}$ is an ideal of $S$ and $0_1$ and $0_2$ are the
zeros of the semigroups $B_{\lambda_1}^0(S)$ and
$B_{\lambda_2}^0(T)$, respectively. For each
$(h,u,\varphi)\in\operatorname{\textbf{Mor}}(\mathscr{B})$ with a
trivial homomorphism $h$ we define a map
$\textbf{B}{(h,u,\varphi)}\colon
\textbf{B}(S,I_{\lambda_1})=B_{\lambda_1}^0(S)\rightarrow
\textbf{B}(T,I_{\lambda_2})=B_{\lambda_2}^0(T)$ as follows:
$(a)[\textbf{B}{(h,u,\varphi)}]=0_2$ for all $a\in
\textbf{B}(S,I_{\lambda_1})=B_{\lambda_1}^0(S)$. If $S$ is a
trivial semigroup then we define $\textbf{B}(S,I_{\lambda_1})$ to
be a trivial semigroup.

A functor $\textbf{F}$ from a category $\mathscr{C}$ into a
category $\mathscr{K}$ is called \emph{full} if for any
$a,b\in\operatorname{\textbf{Ob}}(\mathscr{C})$ and for any
$\mathscr{K}$-morphism $\alpha\colon \textbf{F}a\rightarrow
\textbf{F}b$ there exists a $\mathscr{C}$-morphism $\beta\colon
a\rightarrow b$ such that $\textbf{F}\beta=\alpha$, and
$\textbf{F}$ called \emph{representative} if for any
$a\in\operatorname{\textbf{Ob}}(\mathscr{K})$ there exists
$b\in\operatorname{\textbf{Ob}}(\mathscr{C})$ such that $a$ and
$\textbf{F}b$ are isomorphic.

\begin{theorem}\label{theorem3-1}
$\operatorname{\textbf{B}}$ is a full representative functor from
$\mathscr{B}$ into $\mathscr{B}^*(\mathscr{S})$.
\end{theorem}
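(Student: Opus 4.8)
The plan is to verify the three defining properties of $\textbf{B}$ one at a time: that it is a functor, that it is full, and that it is representative.

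First I would check that $\textbf{B}$ is a well-defined functor from $\mathscr{B}$ into $\mathscr{B}^*(\mathscr{S})$. On objects this is immediate from the definitions: $(S,I_{\lambda_1})$ with $S\in\mathfrak{B}$ is sent to $B_{\lambda_1}^0(S)$, which lies in $\operatorname{\textbf{Ob}}(\mathscr{B}^*(\mathscr{S}))$ since $S$ has the $\mathcal{B}^*$-property and all idempotents of $S$ are central. On morphisms, for a triple $(h,u,\varphi)$ with $h$ non-trivial, Theorem~\ref{theorem2-10} (its first assertion) already tells us that the map defined by formula~\eqref{functor_B} is a genuine homomorphism $B_{\lambda_1}^0(S)\rightarrow B_{\lambda_2}^0(T)$; for $h$ trivial the constant map to $0_2$ is trivially a homomorphism. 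It remains to check identities and composition: $\textbf{B}\varepsilon_{(S,I)}=\operatorname{Id}_{B_\lambda^0(S)}$ follows by substituting $\operatorname{Id}_S$, $u_0$, $\operatorname{Id}_I$ into~\eqref{functor_B} and using $(\alpha)u_0=1_S$; and $\textbf{B}((h,u,\varphi)(h',u',\varphi'))=\textbf{B}(h,u,\varphi)\circ\textbf{B}(h',u',\varphi')$ is a direct computation comparing~\eqref{eq3-2}--\eqref{eq3-3} with composing two instances of~\eqref{functor_B}, the point being precisely that the twisting factor $(\alpha)[u,\varphi,h',u']=((\alpha)\varphi)u'\cdot((\alpha)u)h'$ is exactly what appears when one pushes $(\alpha)u\cdot(s)h\cdot((\beta)u)^{-1}$ through $h'$ and then multiplies by the $u'$-factors at the image indices $(\alpha)\varphi,(\beta)\varphi$. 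One must also track the trivial-homomorphism cases separately (if either $h$ or $h'$ is trivial, or the composite becomes trivial, both sides collapse to the constant-$0_2$ map).

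Next I would prove that $\textbf{B}$ is full. Let $(S,I_{\lambda_1})$ and $(T,I_{\lambda_2})$ be objects of $\mathscr{B}$ and let $\sigma\colon B_{\lambda_1}^0(S)\rightarrow B_{\lambda_2}^0(T)$ be a morphism in $\mathscr{B}^*(\mathscr{S})$; we must produce $(h,u,\varphi)\in\operatorname{\textbf{Mor}}(\mathscr{B})$ with $\textbf{B}(h,u,\varphi)=\sigma$. If $\sigma$ is trivial, take $h$ trivial and any $u,\varphi$. If $\sigma$ is non-trivial, this is exactly the content of the second half of Theorem~\ref{theorem2-10}: since $T\in\mathfrak{B}$, conditions $(i)$ and $(ii)$ of that theorem hold, so $\sigma$ arises from some homomorphism $h\colon S\rightarrow T$ with $(0_S)h=0_T$ (hence $h\in\texttt{Hom}\,_0(S,T)$), some one-to-one $\varphi\colon I_{\lambda_1}\rightarrow I_{\lambda_2}$, and some map $u\colon I_{\lambda_1}\rightarrow H_e$ into a maximal subgroup $H_e=H(e)$ with $e\in\mathbf{E}_1(S,T)$ (indeed $e=(1_S)h$ works, or the idempotent $((\alpha,1_S,\alpha))\sigma$ determines $e$; one should note $e\in\mathbf{E}_1(S,T)$ so that $H(e)\in\mathscr{H}_1(S,T)$ and the triple is a legitimate morphism of $\mathscr{B}$). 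Then the defining formula for $\sigma$ in Theorem~\ref{theorem2-10} is literally formula~\eqref{functor_B}, so $\textbf{B}(h,u,\varphi)=\sigma$.

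Finally, representativeness: given any object $A\in\operatorname{\textbf{Ob}}(\mathscr{B}^*(\mathscr{S}))$, by definition $A=B_{\lambda}^0(S)$ for some monoid $S$ with zero having the $\mathcal{B}^*$-property and central idempotents, i.e. $S\in\mathfrak{B}$; then $(S,I_\lambda)\in\operatorname{\textbf{Ob}}(\mathscr{B})$ and $\textbf{B}(S,I_\lambda)=B_\lambda^0(S)=A$, so $A$ is (even equal to, hence isomorphic to) $\textbf{B}$ of an object of $\mathscr{B}$; if $S$ is trivial use the convention identifying all $(S,I)$. I expect the main obstacle to be the composition identity in the functoriality step: one has to be careful about which index set the twisting map $u'$ is evaluated on (note $u'\colon I_{\lambda_2}\rightarrow H(e')$ while $u\colon I_{\lambda_1}\rightarrow H(e)$, and $\varphi\colon I_{\lambda_1}\rightarrow I_{\lambda_2}$, so $((\alpha)\varphi)u'$ makes sense), about the interaction of the ideals $I_h$ and $I_{h'}$ (an element $s$ lands at $0_2$ under the composite iff $s\in I_h$ or $(s)h\in I_{h'}$, which matches $I_{hh'}$), and about the bookkeeping between $H(e)\subseteq T$ and its image/conjugates in $R$ so that the group-element products are taken in the correct maximal subgroups. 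Once the non-trivial case is pinned down, the trivial cases are routine, and I would dispatch them with a single sentence.
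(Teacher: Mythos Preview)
Your proposal is correct and follows essentially the same route as the paper's proof: well-definedness on objects and morphisms via Theorem~\ref{theorem2-10}, the identity check, a direct computation for composition (with the trivial cases handled separately via Proposition~\ref{proposition2-1}), fullness from the second assertion of Theorem~\ref{theorem2-10}, and representativeness straight from the definition of the Brandt $\lambda^0$-extension. Your discussion of the ideal interaction $I_h$ versus $I_{h'}$ and of the maximal-subgroup bookkeeping is a bit more explicit than the paper's, but the structure and the key inputs are identical.
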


\begin{proof}
For any $(S,I_{\lambda})
\in\operatorname{\textbf{Ob}}(\mathscr{B})$,
$\textbf{B}((S,I_{\lambda}))$ is the the Brandt $\lambda^0$-extension
of the monoid with zero $S$ by Proposition~\ref{proposition2-2},
and hence we have that $\textbf{B}(S,I_{\lambda})\in
\operatorname{\textbf{Ob}}(\mathscr{B}^*(\mathscr{S}))$. By
Theorem~\ref{theorem2-10} we have that for a
$\mathscr{B}$-morphism $(h,u,\varphi)\colon
(S,I_{\lambda_1})\rightarrow (T,I_{\lambda_2})$,
$\textbf{B}{(h,u,\varphi)}$ is a non-trivial homomorphism of
$\textbf{B}(S,I_{\lambda_1})$ into $\textbf{B}(T,I_{\lambda_2})$
in the case when $h$ is a non-trivial homomorphism. Obviously,
$\textbf{B}\varepsilon_{(S,I)}=
\textbf{B}(\operatorname{Id}_S,u_0,\operatorname{Id}_I)$ is the
identity automorphism of $\textbf{B}(S,I)$. Let
$(h,u,\varphi)\colon (S,I_{\lambda_1})\rightarrow
(T,I_{\lambda_2})$ and $(f,v,\psi)\colon
(T,I_{\lambda_2})\rightarrow (R,I_{\lambda_3})$ be
$\mathscr{B}$-morphisms with non-trivial $h$ and $f$. Then for any
$(\alpha,s,\beta)\in\textbf{B}(S,I_{\lambda_1})$ we get
\begin{equation*}
\begin{split}
    (\alpha,s,\beta)&\,[\textbf{B}(h,u,\varphi)]\big[\textbf{B}(f,v,\psi)\big]=\\
        =&\left\{%
\begin{array}{cl}
    \big(((\alpha)\varphi,(\alpha)u\cdot(s)h\cdot((\beta)u)^{-1},(\beta)\varphi)\big)
    \big[\textbf{B}(f,v,\psi)\big],
    & \hbox{if}\quad s\notin S\setminus I_h ;\\
    (0_2)\big[\textbf{B}(f,v,\psi)\big], & \hbox{if}\quad s\in I_h^*,\\
\end{array}%
\right.\\
\end{split}
\end{equation*}
\begin{equation*}
\begin{split}
 &\big(((\alpha)\varphi,(\alpha)u\cdot(s)h\cdot((\beta)u)^{-1},(\beta)\varphi)\big)
    \big[\textbf{B}(f,v,\psi)\big]=\\
    =&\left\{%
\begin{array}{cl}
    \!\!\!\!\big(((\alpha)\varphi)\psi{,}
    ((\alpha)\varphi)v{\cdot}\big((\alpha)u{\cdot}(s)h{\cdot}((\beta)u)^{-1}\big)f
    {\cdot}\big(((\beta)\psi)v\big)^{-1}\!{,}
    ((\beta)\varphi)\psi\big),
    & \!\!\!\hbox{if~} (\alpha)u{\cdot}(s)h{\cdot}((\beta)u)^{-1}{\notin} T{\setminus} I_f;\\
    (0_2)\big[\textbf{B}(f,v,\psi)\big], & \!\!\!\hbox{if~}
    (\alpha)u{\cdot}(s)h{\cdot}((\beta)u)^{-1}{\in} I_f^*,\\
\end{array}%
\right.\\
    =&\left\{%
\begin{array}{cl}
    \!\!\!\!\big(((\alpha)\varphi)\psi{,}
    \big(((\alpha)\varphi)v{\cdot}((\alpha)u)f\big)
    {\cdot}((s)h)f
    {\cdot}\big(((\beta)\varphi)v{\cdot}((\beta)u)f\big)^{-1}\!{,}
    ((\beta)\varphi)\psi\big),
    & \!\!\!\hbox{if~} (\alpha)u{\cdot}(s)h{\cdot}((\beta)u)^{-1}{\notin} T{\setminus} I_f;\\
    (0_2)\big[\textbf{B}(f,v,\psi)\big], & \!\!\!\hbox{if~}
    (\alpha)u{\cdot}(s)h{\cdot}((\beta)u)^{-1}{\in} I_f^*,\\
\end{array}%
\right.
\end{split}
\end{equation*}
and $(0_1)[\textbf{B}(h,u,\varphi)]\big[\textbf{B}(f,v,\psi)\big]=
(0_2)\big[\textbf{B}(f,v,\psi)\big]=0_3$. In the case when for at
least one of the $\mathscr{B}$-morphisms $(h,u,\varphi)\colon
(S,I_{\lambda_1})\rightarrow (T,I_{\lambda_2})$ and
$(f,v,\psi)\colon (T,I_{\lambda_2})\rightarrow (R,I_{\lambda_3})$,
either $h$ or $f$ is trivial, we have by
Proposition~\ref{proposition2-1} that
$(x)[\textbf{B}(h,u,\varphi)]\big[\textbf{B}(f,v,\psi)\big]=0_3$.
Therefore $\textbf{B}$ preserves the compositions of morphisms,
and hence $\textbf{B}$ is a functor from $\mathscr{B}$ into
$\mathscr{B}^*(\mathscr{S})$.

Theorem~\ref{theorem2-10} implies that the functor $\textbf{B}$ is
full and by the definition of the Brandt $\lambda^0$-extension we
conclude that the functor $\textbf{B}$ is representative.
\end{proof}

We define the first series of categories $\mathscr{BI}$,
$\mathscr{BIC}$, $\mathscr{BSL}$, $\mathscr{BS}_2$ and
$\mathscr{BG}$ as follows:
\begin{itemize}
    \item[$(i)$] $\operatorname{\textbf{Ob}}(\mathscr{BI})=
    \{(S,I)\mid S\in\mathfrak{B} \textrm{~is an inverse monoid~and~} I \textrm{~is a
    non-empty set}\}$, and if $S$ is a trivial semigroup then we
    identify $(S,I)$ and $(S,J)$ for all non-empty sets $I$ and
    $J$;
    \item[] $\operatorname{\textbf{Ob}}(\mathscr{BIC})=
    \{(S,I)\mid S\textrm{~is an inverse Clifford monoid~and~} I \textrm{~is a
    non-empty set}\}$, and if $S$ is a trivial semigroup then we
    identify $(S,I)$ and $(S,J)$ for all non-empty sets $I$ and
    $J$;
    \item[] $\operatorname{\textbf{Ob}}(\mathscr{BSL})=
    \{(S,I)\mid S\textrm{~is a semilattice with unity and zero~and~} I$ ~is a
    non-empty set$\}$, and if $S$ is a trivial semigroup then we
    identify $(S,I)$ and $(S,J)$ for all non-empty sets $I$ and
    $J$;
    \item[] $\operatorname{\textbf{Ob}}(\mathscr{BS}_2)=
    \{(S,I)\mid S$~is a monoid~with two idempotents, the zero and the unity and $I$ is a
    non-empty set$\}$;
    \item[] $\operatorname{\textbf{Ob}}(\mathscr{BG})=
    \{(S,I)\mid S$~is a group and $I$ is a non-empty set$\}$;
    \item[$(ii)$] $\operatorname{\textbf{Mor}}(\mathscr{BI})$,
    $\operatorname{\textbf{Mor}}(\mathscr{BIC})$,
    $\operatorname{\textbf{Mor}}(\mathscr{BSL})$,
    $\operatorname{\textbf{Mor}}(\mathscr{BS}_2)$ and
    $\operatorname{\textbf{Mor}}(\mathscr{BG})$
    consist of corresponding triples $(h,u,\varphi)\colon$
    $
    (S,I)\rightarrow(S^{\,\prime},I^{\,\prime})$, which satisfy
    condition (\ref{eq3-1}).
\end{itemize}
Obviously, $\mathscr{BI}$, $\mathscr{BIC}$, $\mathscr{BSL}$,
$\mathscr{BS}_2$ and $\mathscr{BG}$ are subcategories of the
category $\mathscr{B}$.

The second series of categories $\mathscr{B}^*(\mathscr{IS})$,
$\mathscr{B}^*(\mathscr{ICS})$, $\mathscr{B}^*(\mathscr{SL})$,
$\mathscr{B}^*(\mathscr{S}_2)$ and $\mathscr{B}^*(\mathscr{G})$ is
defined as follows:
\begin{itemize}
    \item[$(i)$] $\operatorname{\textbf{Ob}}(\mathscr{B}^*(\mathscr{IS}))$
    are all Brandt $\lambda^0$-extensions of inverse monoids $S$ with zeros
    such that $S$ has $\mathfrak{B}^*$-property and every
    idempotent of $S$ lies in the center of $S$;
    \item[] $\operatorname{\textbf{Ob}}(\mathscr{B}^*(\mathscr{ICS}))$
    are all Brandt $\lambda^0$-extensions of inverse Clifford monoids
    with zeros;
    \item[] $\operatorname{\textbf{Ob}}(\mathscr{B}^*(\mathscr{SL}))$
    are all Brandt $\lambda^0$-extensions of semilattices with zeros and
    identities;
    \item[] $\operatorname{\textbf{Ob}}(\mathscr{B}^*(\mathscr{S}_2))$
    are all Brandt $\lambda^0$-extensions of monoids~with two
    idempotents zeros and identities;
    \item[] $\operatorname{\textbf{Ob}}(\mathscr{B}^*(\mathscr{G}))$
    are all Brandt semigroups;
    \item[$(ii)$] $\operatorname{\textbf{Mor}}(\mathscr{B}^*(\mathscr{IS}))$,
    $\operatorname{\textbf{Mor}}(\mathscr{B}^*(\mathscr{ICS}))$,
    $\operatorname{\textbf{Mor}}(\mathscr{B}^*(\mathscr{SL}))$ and
    $\operatorname{\textbf{Mor}}(\mathscr{B}^*(\mathscr{S}_2))$
    are homomorphisms of the Brandt $\lambda^0$-extensions
    of monoids $S$ with zeros such that $S$ has the
    $\mathcal{B}^*$-property and every idempotent of $S$ lies in
    the center of $S$, inverse Clifford monoids with zeros, semilattices
    with zeros and identities, monoids~with two idempotents zeros and
    identities and $\operatorname{\textbf{Mor}}(\mathscr{B}^*(\mathscr{G}))$
    be non-trivial
    homomorphisms of Brandt semigroups.
\end{itemize}

The proof of the following proposition is similar to the proof of
Theorem~\ref{theorem3-1}.

\begin{proposition}\label{proposition3-2}
$\textbf{B}$ is a full representative functor from $\mathscr{BI}$
$[$resp., $\mathscr{BIC}$, $\mathscr{BSL}$, $\mathscr{BS}_2$ and
$\mathscr{BG}$$]$ into $\mathscr{B}^*(\mathscr{IS})$ $[$resp.,
$\mathscr{B}^*(\mathscr{ICS})$, $\mathscr{B}^*(\mathscr{SL})$,
$\mathscr{B}^*(\mathscr{S}_2)$ and
$\mathscr{B}^*(\mathscr{G})$$]$.
\end{proposition}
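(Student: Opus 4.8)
The plan is to run, essentially word for word, the proof of Theorem~\ref{theorem3-1}, after two preliminary verifications. The first is that $\mathscr{BI}$, $\mathscr{BIC}$, $\mathscr{BSL}$, $\mathscr{BS}_2$ and $\mathscr{BG}$ are genuinely subcategories of $\mathscr{B}$, i.e. that each monoid $S$ occurring as the first coordinate of an object lies in $\mathfrak{B}$. Centrality of idempotents is automatic in every case: by definition for inverse Clifford monoids, by commutativity for semilattices, and because a zero and a unity are always central, so it holds for monoids with exactly two idempotents and for $G^0$ with $G$ a group. For the $\mathcal{B}^*$-property one checks that $B_2$ embeds in none of these monoids: $B_2$ is non-commutative, so it lies in no semilattice; $B_2$ is inverse but not Clifford, while an inverse subsemigroup of a Clifford inverse semigroup is again Clifford, so $B_2$ lies in no inverse Clifford monoid; and $B_2$ has three distinct idempotents, so it lies in no monoid with exactly two idempotents, in particular in no $G^0$. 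The second preliminary remark is that $\mathscr{B}^*(\mathscr{IS})$, $\mathscr{B}^*(\mathscr{ICS})$, $\mathscr{B}^*(\mathscr{SL})$, $\mathscr{B}^*(\mathscr{S}_2)$ and $\mathscr{B}^*(\mathscr{G})$ are, by their very definitions, exactly the subcategories of $\mathscr{B}^*(\mathscr{S})$ whose objects come from the monoid classes just discussed (using Corollary~\ref{corollary1-8} for the group case, so that $\textbf{B}(G,I_\lambda)=B_\lambda^0(G^0)$ really is a Brandt semigroup).

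With these in hand the functoriality part is immediate: for $(S,I_\lambda)$ in a given subcategory, $\textbf{B}(S,I_\lambda)=B_\lambda^0(S)$ lies in the corresponding target category by definition and Proposition~\ref{proposition2-2}; for a morphism $(h,u,\varphi)$ of the subcategory, the map $\textbf{B}(h,u,\varphi)$ is a homomorphism of the relevant Brandt $\lambda^0$-extensions by the constructive part of Theorem~\ref{theorem2-10} (which imposes no condition on the target monoid), hence a morphism of the target category; and preservation of identities and of composites is the very computation carried out in the proof of Theorem~\ref{theorem3-1} --- it stays inside the subcategory because a subcategory is closed under composition.

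For fullness, let $\alpha\colon\textbf{B}(S,I_{\lambda_1})\rightarrow\textbf{B}(T,I_{\lambda_2})$ be a morphism of the target category, i.e. a homomorphism $B_{\lambda_1}^0(S)\rightarrow B_{\lambda_2}^0(T)$ with $S$ and $T$ in the relevant class. If $\alpha$ is trivial it is $\textbf{B}$ of the zero homomorphism together with any admissible $u$ and $\varphi$ (and in the case of $\mathscr{B}^*(\mathscr{G})$, whose morphisms are required to be non-trivial, this case does not occur). If $\alpha$ is non-trivial, then $T$ has central idempotents (it lies in $\mathfrak{B}$) and the $\mathcal{B}_{\lambda_1}^*$-property: indeed $T$ has the $\mathcal{B}^*$-property, and for $\lambda_1\geqslant 2$ a copy of $B_2$ sits inside $B_{\lambda_1}$, so $T$ contains neither $B_{\lambda_1}$ nor $B_2$; for $\lambda_1=1$ we have $B_{\lambda_1}^0(S)=S$ and the assertion is immediate. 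Hence Theorem~\ref{theorem2-10} applies and exhibits $\alpha=\textbf{B}(h,u,\varphi)$ for some $h\in\texttt{Hom}\,_0(S,T)$, a one-to-one $\varphi\colon I_{\lambda_1}\rightarrow I_{\lambda_2}$ and a map $u\colon I_{\lambda_1}\rightarrow H_e$, where $e=(1_S)h$ is a non-zero idempotent of $T$, so that $e\in\mathbf{E}_1(S,T)$ and $H_e=H(e)\in\mathscr{H}_1(S,T)$. It remains to note that $(h,u,\varphi)$ is a morphism of the subcategory, which is automatic for $\mathscr{BI}$, $\mathscr{BIC}$, $\mathscr{BSL}$ and $\mathscr{BS}_2$ since $h$ is a homomorphism between two objects of the class; for $\mathscr{BG}$ one uses in addition that non-triviality of $\alpha$ forces $e\neq 0$, hence $e=1_{H^0}$, so that $h$ restricts to a group homomorphism $G\rightarrow H$ into the maximal subgroup $H(e)=H$. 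Finally $\textbf{B}$ is representative because, by construction, every object of the target category equals $B_\lambda^0(S)=\textbf{B}(S,I_\lambda)$ for $S$ in the relevant class and some cardinal $\lambda$.

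The argument contains no new idea; the only real work is the case-by-case bookkeeping described above --- checking that each of the five monoid classes lies in $\mathfrak{B}$ (equivalently, contains no copy of $B_2$), and that the ingredient homomorphism $h$ produced by Theorem~\ref{theorem2-10} stays in the class. The single point that needs a separate (tiny) argument is the group case, where one must rule out the ingredient $h$ collapsing part of $G$ to the zero; everything else is forced by the first two preliminary observations together with Theorems~\ref{theorem2-10} and~\ref{theorem3-1}.
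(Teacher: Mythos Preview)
Your proposal is correct and takes essentially the same approach as the paper, which simply declares the proof to be ``similar to the proof of Theorem~\ref{theorem3-1}.'' You supply considerably more detail---in particular the verification that each of the five monoid classes lies in $\mathfrak{B}$ (which the paper dispatches with a single ``Obviously'' just before the proposition) and the small extra argument needed in the group case---but the underlying strategy is identical.
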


\begin{proposition}\label{proposition3-3}
Let $(h,u,\varphi)\colon (S,I_{\lambda_1})\rightarrow
(T,I_{\lambda_2})$ and $(f,v,\psi)\colon
(S,I_{\lambda_1})\rightarrow (T,I_{\lambda_2})$ be
$\mathscr{BSL}$-morphisms. Then
$\textbf{B}(h,u,\varphi)=\textbf{B}(f,v,\psi)$ if and only if
$h=f$, $u=v$ and $\varphi=\psi$.
\end{proposition}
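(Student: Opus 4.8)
One direction is trivial: if $h=f$, $u=v$ and $\varphi=\psi$ then $\textbf{B}(h,u,\varphi)$ and $\textbf{B}(f,v,\psi)$ are defined by the very same formula \eqref{functor_B}, so the whole content is the forward implication, and the plan is to recover each of the data $h,u,\varphi$ from the map $\textbf{B}(h,u,\varphi)$. The structural observation that makes this work is that $T$, being a semilattice, is a band, and in a band every maximal subgroup is a one-point group, $H(e)=\{e\}$ for each $e\in E(T)$. Since the $u$-component of a $\mathscr{BSL}$-morphism $(h,u,\varphi)$ takes its values in the maximal subgroup $H\bigl((1_S)h\bigr)$ of $T$ attached to $h$ (just as the $u$-component of the identity morphism of $(S,I)$ is the constant map $\alpha\mapsto1_S$), it follows that $u$ is the constant map with value $(1_S)h$, and likewise $v\equiv(1_S)f$. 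In particular $u$ is already determined by $h$, so it will suffice to prove that $\textbf{B}(h,u,\varphi)=\textbf{B}(f,v,\psi)$ forces $h=f$ and $\varphi=\psi$; then $u\equiv(1_S)h=(1_S)f\equiv v$.

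Put $\Sigma=\textbf{B}(h,u,\varphi)=\textbf{B}(f,v,\psi)$. I would first dispose of the degenerate case in which one of $h,f$ is trivial. If $h$ is trivial, then by the definition of $\textbf{B}$ on morphisms with trivial first component $\Sigma$ is the constant map onto $0_2$; were $f$ non-trivial, $((\alpha,1_S,\alpha))\textbf{B}(f,v,\psi)$ would be a non-zero idempotent of $B_{\lambda_2}^0(T)$ by \eqref{functor_B}, contradicting $\Sigma\equiv0_2$. So $f$ is trivial as well, $h=f$ is the zero-valued homomorphism and $u\equiv0_T\equiv v$; in this case $\textbf{B}$ does not record $\varphi$, so here the statement has to be understood for non-trivial morphisms (which is the substance, treated next). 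Hence assume $h,f$ non-trivial, so that $(1_S)h\neq0_T\neq(1_S)f$.

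Now evaluate $\Sigma$ on the matrix-unit idempotents $(\alpha,1_S,\alpha)$, $\alpha\in I_{\lambda_1}$. Using $u\equiv(1_S)h$, the fact that $(1_S)h$ is its own inverse in $H\bigl((1_S)h\bigr)=\{(1_S)h\}$, and $1_S\notin I_h$, formula \eqref{functor_B} gives
\[
((\alpha,1_S,\alpha))\Sigma=\bigl((\alpha)\varphi,\ (1_S)h,\ (\alpha)\varphi\bigr),
\]
and by symmetry $((\alpha,1_S,\alpha))\Sigma=\bigl((\alpha)\psi,\ (1_S)f,\ (\alpha)\psi\bigr)$. Comparing coordinates yields $(\alpha)\varphi=(\alpha)\psi$ for all $\alpha\in I_{\lambda_1}$, that is, $\varphi=\psi$, and $(1_S)h=(1_S)f$, whence $u=v$. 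For the remaining equality $h=f$, evaluate $\Sigma$ on $(\alpha,s,\alpha)$ for arbitrary $s\in S$ and a fixed $\alpha\in I_{\lambda_1}$. If $s\notin I_h$, then since $1_S\cdot s=s\cdot1_S=s$ the middle entry in \eqref{functor_B} is $(1_S)h\cdot(s)h\cdot(1_S)h=(s)h$, so $((\alpha,s,\alpha))\Sigma=\bigl((\alpha)\varphi,(s)h,(\alpha)\varphi\bigr)$, and the same computation for $f$ gives $\bigl((\alpha)\varphi,(s)f,(\alpha)\varphi\bigr)$; thus $(s)h=(s)f$. If $s\in I_h^*$, then $((\alpha,s,\alpha))\Sigma=0_2$, while $\textbf{B}(f,v,\psi)$ sends the non-zero element $(\alpha,s,\alpha)$ to $0_2$ only when $s\in I_f^*$ (for $s\notin I_f$ the middle entry $(1_S)f\cdot(s)f\cdot(1_S)f=(s)f$ is non-zero, as $f$ is non-trivial); hence $s\in I_f^*$ and $(s)f=0_T=(s)h$. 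Together with $(0_S)h=0_T=(0_S)f$ this gives $h=f$.

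The argument is essentially bookkeeping. The genuinely load-bearing step is the structural fact that maximal subgroups of the semilattice $T$ are trivial — this is exactly what pins $u$ down to $(1_S)h$ and makes the equivalence true; the only mildly awkward point is the trivial-homomorphism case, where $\textbf{B}$ forgets the data $u$ and $\varphi$ and which therefore must be read with non-trivial morphisms in mind, as indicated above.
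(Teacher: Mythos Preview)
Your proof is correct and follows essentially the same route as the paper: both arguments hinge on the observation that in a semilattice every maximal subgroup $H(e)$ is the singleton $\{e\}$, so that $(\alpha)u=((\alpha)u)^{-1}=(1_S)h$ and likewise $(\alpha)v=(1_S)f$, after which comparing the two instances of formula~\eqref{functor_B} immediately yields $\varphi=\psi$ and $h=f$. Your version is more explicit in spelling out the evaluations on $(\alpha,1_S,\alpha)$ and $(\alpha,s,\alpha)$ and in separating the cases $s\notin I_h$ versus $s\in I_h^*$, and you rightly flag the degenerate situation where $h$ is the trivial homomorphism (in which $\textbf{B}$ forgets $\varphi$)---a boundary case the paper's brief proof simply passes over.
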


\begin{proof}
By definition of the functor $\textbf{B}$ we have
$\textbf{B}(h,u,\varphi)=\textbf{B}(f,v,\psi)$ if and only if
\begin{equation*}
    ((\alpha)\varphi,(\alpha)u\cdot(s)h\cdot((\beta)u)^{-1},(\beta)\varphi)=
    ((\alpha)\psi,(\alpha)v\cdot(s)f\cdot((\beta)v)^{-1},(\beta)\psi),
\end{equation*}
for $(\alpha,s,\beta)\in\textbf{B}(S,I_{\lambda_1})$. Then
$\varphi=\psi$ and since for semilattices $S$ we have
$(\alpha)u=((\alpha)u)^{-1}=(1_S)h$ and
$(\alpha)v=((\alpha)v)^{-1}=(1_S)f$, we get that $h=f$.
\end{proof}

\begin{remark}\label{remark3-4}
The definition of
the
functor $\textbf{B}$ implies that $\textbf{B}$
is one-to-one on objects of the category $\mathscr{BI}$ [resp.,
$\mathscr{BIC}$, $\mathscr{BSL}$, $\mathscr{BS}_2$ and
$\mathscr{BG}$].
Also, Proposition~\ref{proposition3-3} implies
that the functor $\textbf{B}$ is one-to-one on morphisms of the
category $\mathscr{BSL}$, but Proposition~II.3.9 of
\cite{Petrich1984} implies that
the
functor $\textbf{B}$ is not
one-to-one on morphisms of the category $\mathscr{BI}$ [resp.,
$\mathscr{BIC}$, $\mathscr{BS}_2$ and $\mathscr{BG}$].
\end{remark}

Therefore Theorem~\ref{theorem3-1} and
Proposition~\ref{proposition3-3} imply:

\begin{corollary}\label{corollary3-5}
The categories $\mathscr{BSL}$ and $\mathscr{B}^*(\mathscr{SL})$
are isomorphic.
\end{corollary}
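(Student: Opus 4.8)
The plan is to promote $\textbf{B}$, restricted to the subcategory $\mathscr{BSL}$, to an honest isomorphism of categories onto $\mathscr{B}^*(\mathscr{SL})$, by verifying that this restriction is a bijection on objects and a bijection on every hom-set; a functor with both properties possesses a strict two-sided inverse functor. Proposition~\ref{proposition3-2} already supplies the bulk of what is needed, namely that $\textbf{B}\colon\mathscr{BSL}\to\mathscr{B}^*(\mathscr{SL})$ is a full representative functor, so the two remaining tasks are to strengthen ``representative'' into ``bijective on objects'' and to establish faithfulness; Theorem~\ref{theorem3-1} restricted to the subcategory gives the same input if one prefers to work from there.

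For objects, surjectivity is built into the definition of $\mathscr{B}^*(\mathscr{SL})$: each of its objects is, by construction, a Brandt $\lambda^0$-extension $B_\lambda^0(S)$ of a semilattice $S$ with zero and identity, i.e.\ $\textbf{B}(S,I_\lambda)$ with $(S,I_\lambda)\in\operatorname{\textbf{Ob}}(\mathscr{BSL})$. Injectivity on objects is Remark~\ref{remark3-4}, which I would justify by applying Proposition~\ref{proposition2-2} to the identity homomorphism so as to recover from the semigroup $B_\lambda^0(S)$ both the monoid $S$ (as one of the submonoids $S_{\alpha,\alpha}$) and the index set $I_\lambda$, the agreed identification of $(S,I)$ with $(S,J)$ for a one-element $S$ absorbing the only ambiguity in the degenerate case. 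For morphisms, fullness is again Proposition~\ref{proposition3-2}, and faithfulness is precisely Proposition~\ref{proposition3-3}: it states that two parallel $\mathscr{BSL}$-morphisms with equal image under $\textbf{B}$ coincide, the argument there exploiting that over a semilattice $(\alpha)u=((\alpha)u)^{-1}=(1_S)h$, so $u$ carries no information beyond $h$, while the index map $\varphi$ stays readable as the pair of outer coordinates of $\textbf{B}(h,u,\varphi)$.

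Putting the two bijectivity statements together, $\textbf{B}|_{\mathscr{BSL}}$ is bijective on objects and, being full and faithful, bijective on each hom-set; hence the object assignment $B_\lambda^0(S)\mapsto(S,I_\lambda)$ together with the inverses of the hom-set bijections on morphisms defines a functor $\mathscr{B}^*(\mathscr{SL})\to\mathscr{BSL}$ that is two-sided inverse to $\textbf{B}$, which is the asserted isomorphism $\mathscr{BSL}\cong\mathscr{B}^*(\mathscr{SL})$.

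The step I expect to be the genuine obstacle is not any computation but the careful treatment of exactly the cases where the generic argument is silent: the trivial homomorphisms, for which $\textbf{B}(h,u,\varphi)$ collapses to the zero homomorphism and a priori discards $u$ and $\varphi$, and the one-element semigroup, for which distinct pairs $(S,I)$ are glued. One must confirm that, under the conventions adopted, distinct $\mathscr{BSL}$-morphisms still produce distinct $\textbf{B}$-images and that the object reconstruction $B_\lambda^0(S)\mapsto(S,I_\lambda)$ is well defined literally and not merely up to isomorphism; these are exactly the points that separate an isomorphism of categories from a mere equivalence, and they are where Propositions~\ref{proposition2-2} and~\ref{proposition3-3} have to be invoked with some care.
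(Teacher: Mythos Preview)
Your proposal is correct and follows essentially the same route as the paper: the corollary is deduced from the combination of fullness and representativity (Theorem~\ref{theorem3-1}/Proposition~\ref{proposition3-2}), faithfulness (Proposition~\ref{proposition3-3}), and bijectivity on objects (Remark~\ref{remark3-4}), which together promote $\textbf{B}|_{\mathscr{BSL}}$ to a strict isomorphism. Your closing caveat about the trivial-homomorphism and one-element cases is well taken, and indeed the paper's own treatment glosses over exactly these degenerate situations in Proposition~\ref{proposition3-3}; but as an account of the intended argument your proposal matches the paper's.
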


%%%%%%%%%%%%%%%%%%%%%%%%%%%%%%%%%%%%%%%%%%%%%%%%%%%%%%%%%%%%

\section{Topological Brandt $\lambda^0$-extensions of topological
monoids with zero}

A topological space $S$ which
is algebraically
a
semigroup with a
jointly continuous semigroup operation is called a {\em
topological semigroup}. A {\em topological inverse semigroup} is a
topological semigroup $S$ that is algebraically an inverse
semigroup with continuous inversion. If $\tau$ is a topology on a
(inverse) semigroup $S$ such that $(S,\tau)$ is a topological
(inverse) semigroup, then $\tau$ is called a ({\em inverse}) {\em
semigroup topology} on $S$.

In this section we shall
follow the terminology
of~\cite{CarruthHildebrantKoch} and \cite{Engelking1989}.

\begin{definition}[\cite{GutikPavlyk2006}]\label{definition4-0}
Let $\mathscr{S}$ be some class of topological monoids with zero.
Let $\lambda$ be any cardinal $\geqslant 1$, and
$(S,\tau)\in\mathscr{S}$. Let $\tau_{B}$ be a topology on
$B^0_{\lambda}(S)$ such that:
\begin{itemize}
  \item[a)] $\left(B^0_{\lambda}(S), \tau_{B}\right)\in\mathscr{S};$ 
  and
  \item[b)] $\tau_{B}|_{S_{\alpha,\alpha}}=\tau$ for some
$\alpha\in I_{\lambda}$.
\end{itemize}
Then $\left(B^0_{\lambda}(S), \tau_{B}\right)$ is called the
\emph{topological Brandt $\lambda^0$-extension of $(S, \tau)$ in}
$\mathscr{S}$. If $\mathscr{S}$ coincides with the class of all
topological semigroups, then $\left(B^0_{\lambda}(S),
\tau_{B}\right)$ is called the
\emph{topological Brandt
$\lambda^0$-extension of} $(S, \tau)$.
\end{definition}

Results of Section~2 of 
\cite{GutikPavlyk2006} imply
that
for any infinite cardinal $\lambda$
and every non-trivial
topological semigroup $S$, there are many topological Brandt
$\lambda^0$-extensions of $S$, and for any topological inverse
semigroup $T$, there are many topological Brandt
$\lambda^0$-extensions of $T$ in the class of topological inverse
semigroups. Moreover, for any
infinite cardinal $\lambda$ on the
Brandt $\lambda^0$-extension of two-element monoid with zero
(i.~e., on the infinite semigroup of $I_\lambda\times
I_\lambda$-units) there exist many semigroup and inverse semigroup
topologies (cf.  \cite{GutikPavlyk2005}). 

These observations imply
that for infinite cardinals $\lambda$ there
does not exist a proposition
for topological semigroups similar to Theorem~\ref{theorem3-1} and
Propositions~\ref{proposition3-2} and \ref{proposition3-3}. In
this section we prove such statements for any finite non-zero
cardinals.

\begin{proposition}\label{proposition4-3}
Let $\lambda$ be any finite non-zero cardinal. Let be $(S,\tau)$ a
topological semigroup and $\tau_{B}$ a topology on
$B^0_{\lambda}(S)$ such that $\left(B^0_{\lambda}(S),
\tau_{B}\right)$ is a topological semigroup and
$\tau_{B}|_{S_{\alpha,\alpha}}=\tau$ for some $\alpha\in
I_{\lambda}$. Then the following assertions hold:
\begin{itemize}
    \item[$(i)$] If a non-empty subset $A\not\ni 0_S$ of $S$ is
    open in $S$, then so is $A_{\beta,\gamma}$ in
    $\left(B^0_{\lambda}(S),\tau_{B}\right)$ for any
    $\beta,\gamma\in I_{\lambda}$;

    \item[$(ii)$] If a non-empty subset $A\ni 0_S$ of $S$ is
    open in $S$, then so is $\bigcup_{\beta,\gamma\in
    I_{\lambda}}A_{\beta,\gamma}$ in
    $\left(B^0_{\lambda}(S),\tau_{B}\right)$;

    \item[$(iii)$] If a non-empty subset $A\not\ni 0_S$ of $S$ is
    closed in $S$, then so is $A_{\beta,\gamma}$ in
    $\left(B^0_{\lambda}(S),\tau_{B}\right)$ for any
    $\beta,\gamma\in I_{\lambda}$;

    \item[$(iv)$] If a non-empty subset $A\ni 0_S$ of $S$ is
    closed in $S$, then so is $\bigcup_{\beta,\gamma\in
    I_{\lambda}}A_{\beta,\gamma}$ in
    $\left(B^0_{\lambda}(S),\tau_{B}\right)$;

    \item[$(v)$] If $x$ is a non-zero element of $S$ and
    $\mathscr{B}_x$ is a base of the topology $\tau$ at $x$, then
    the family $\mathscr{B}_{(\beta,x,\gamma)}=
    \{U_{\beta,\gamma}\mid U\in\mathscr{B}_x\}$ is a base of the
    topology $\tau_B$ at the point $(\beta,x,\gamma)\in
    B^0_{\lambda}(S)$ for any $\beta,\gamma\in I_{\lambda}$;

    \item[$(vi)$] If $\mathscr{B}_{0_S}$ is a base of the topology
    $\tau$ at zero $0_S$ of $S$, then the family
    $\mathscr{B}_0=\{\bigcup_{\beta,\gamma\in
    I_{\lambda}}U_{\beta,\gamma}\mid U\in\mathscr{B}_{0_S}\}$ is a
    base of the topology $\tau_B$ at zero $0$ of the semigroup
    $B^0_{\lambda}(S)$.
\end{itemize}
\end{proposition}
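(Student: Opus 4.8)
The plan is to reduce all six assertions to two ingredients: the continuity of the inner translations of the topological semigroup $\left(B^0_{\lambda}(S),\tau_B\right)$, and the fact that, $\lambda$ being finite, a finite intersection of neighbourhoods is again a neighbourhood. First I would fix, for each $\beta,\gamma\in I_\lambda$, the self-maps $\psi_{\beta\gamma},\varphi_{\beta\gamma}\colon B^0_\lambda(S)\to B^0_\lambda(S)$ given by $(z)\psi_{\beta\gamma}=(\alpha,1_S,\beta)\cdot z\cdot(\gamma,1_S,\alpha)$ and $(z)\varphi_{\beta\gamma}=(\beta,1_S,\alpha)\cdot z\cdot(\alpha,1_S,\gamma)$; each is continuous, being a composition of a left and a right translation. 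From the definition of the operation on $B^0_\lambda(S)$ one computes that $\psi_{\beta\gamma}$ carries $(\beta,s,\gamma)$ to $(\alpha,s,\alpha)$ and sends every other element, in particular $0$, to $0$, and symmetrically for $\varphi_{\beta\gamma}$; hence $\varphi_{\beta\gamma}$ and $\psi_{\beta\gamma}$ restrict to mutually inverse continuous bijections $S_{\alpha,\alpha}\leftrightarrow S_{\beta,\gamma}$. Since $\tau_B|_{S_{\alpha,\alpha}}=\tau$, this yields the \emph{slice homeomorphism} $\left(S_{\beta,\gamma},\tau_B|_{S_{\beta,\gamma}}\right)\cong(S,\tau)$ via $(\beta,s,\gamma)\leftrightarrow s$, $0\leftrightarrow 0_S$, which I shall use repeatedly.

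Assertions $(i)$ and $(iii)$ then drop out at once. If $A\not\ni 0_S$ is $\tau$-open, write $A_{\alpha,\alpha}=W\cap S_{\alpha,\alpha}$ with $W$ $\tau_B$-open; because $0\in S_{\alpha,\alpha}\setminus A_{\alpha,\alpha}$ we automatically have $0\notin W$, and the computed action of $\psi_{\beta\gamma}$ gives $\psi_{\beta\gamma}^{-1}(W)=A_{\beta,\gamma}$, which is therefore $\tau_B$-open. Replacing ``open'' by ``closed'' and $W$ by a $\tau_B$-closed $F$ with $F\cap S_{\alpha,\alpha}=A_{\alpha,\alpha}$ (which again misses $0$) gives $(iii)$ verbatim; neither argument needs $\lambda$ finite. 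For $(ii)$, given $A\ni 0_S$ open, pick a $\tau_B$-open $V_0$ with $V_0\cap S_{\alpha,\alpha}=A_{\alpha,\alpha}$ and put $W=\bigcap_{\mu,\nu\in I_\lambda}\psi_{\mu\nu}^{-1}(V_0)$; this is $\tau_B$-open because $I_\lambda\times I_\lambda$ is finite, and checking membership case by case (for $z=0$, and for $z=(\mu,s,\nu)\neq 0$ only the condition coming from $\psi_{\mu\nu}$ is non-vacuous, using $0\in V_0$) shows $W=\bigcup_{\beta,\gamma\in I_\lambda}A_{\beta,\gamma}$. The same intersection with $V_0$ replaced by a closed $F_0$ gives $(iv)$; closedness makes even arbitrary intersections legitimate there, so only $(ii)$ genuinely exploits finiteness of $\lambda$ — precisely the phenomenon the introduction warns fails for infinite $\lambda$.

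Statements $(v)$ and $(vi)$ are ``upgrade a subspace base to a base of the whole space'' arguments. For $(v)$: each $U_{\beta,\gamma}$, $U\in\mathscr{B}_x$, is a $\tau_B$-neighbourhood of $(\beta,x,\gamma)$ because, $(S,\tau)$ being Hausdorff and $x\neq 0_S$, there is a $\tau$-open $V$ with $x\in V\subseteq U$ and $0_S\notin V$, whence $(\beta,x,\gamma)\in V_{\beta,\gamma}\subseteq U_{\beta,\gamma}$ with $V_{\beta,\gamma}$ $\tau_B$-open by $(i)$; conversely, any $\tau_B$-neighbourhood $N$ of $(\beta,x,\gamma)$ meets $S_{\beta,\gamma}$ in a subspace-neighbourhood which, via the slice homeomorphism, contains $U_{\beta,\gamma}$ for some $U\in\mathscr{B}_x$. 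For $(vi)$: $\bigcup_{\beta,\gamma}U_{\beta,\gamma}$ is a $\tau_B$-neighbourhood of $0$ by $(ii)$ applied to $\operatorname{int}_\tau U$; conversely, given a $\tau_B$-neighbourhood $N$ of $0$, the slice homeomorphisms turn each $N\cap S_{\beta,\gamma}$ into a $\tau$-neighbourhood $M_{\beta\gamma}$ of $0_S$, the intersection $\bigcap_{\beta,\gamma}M_{\beta\gamma}$ over the finitely many pairs is still a $\tau$-neighbourhood of $0_S$ and hence contains some $U\in\mathscr{B}_{0_S}$, and then $U_{\beta,\gamma}\subseteq N\cap S_{\beta,\gamma}$ for every $\beta,\gamma$, i.e. $\bigcup_{\beta,\gamma}U_{\beta,\gamma}\subseteq N$.

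I expect the only real obstacle to be this last converse — and, underlying it, the corresponding step in $(ii)$: passing from an arbitrary $\tau_B$-neighbourhood of the global zero to a ``slice-saturated'' one, which succeeds solely because $I_\lambda$ is finite. Everything else is bookkeeping with the explicit maps $\psi_{\beta\gamma},\varphi_{\beta\gamma}$ and the slice homeomorphism; one should also double-check the boundary cases $A=\{0_S\}$ and $x\in\overline{\{0_S\}}$ to see that the stated formulas still come out right (they force $\{0\}$, resp.\ the appropriate neighbourhood, to be open, consistently).
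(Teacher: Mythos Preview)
Your proposal is correct and follows essentially the same strategy as the paper: continuity of the inner translations $z\mapsto(\alpha,1_S,\beta)\cdot z\cdot(\gamma,1_S,\alpha)$ together with a finite intersection over $I_\lambda\times I_\lambda$. Your execution of $(i)$ is in fact slightly slicker than the paper's: they first prove by a local argument that $A_{\alpha,\alpha}$ is open in the ambient space (showing that any $\tau_B$-open $W\not\ni 0$ meeting $S_{\alpha,\alpha}$ in $A_{\alpha,\alpha}$ can be shrunk pointwise into $S^*_{\alpha,\alpha}$), and only then translate; you bypass this by observing $0\notin W$ and reading off $\psi_{\beta\gamma}^{-1}(W)=A_{\beta,\gamma}$ directly. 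For $(ii)$ the two arguments coincide --- the paper's sets $\widetilde{A}_{\gamma,\delta}$ are exactly your $\psi_{\gamma\delta}^{-1}(V_0)$. For $(iii)$--$(vi)$ the paper says only ``follow from $(i)$ and $(ii)$'', so your explicit treatment is more detailed, not different. Your appeal to Hausdorffness in $(v)$ is not stated in the proposition but is implicit: the paper follows the conventions of Carruth--Hildebrant--Koch, where topological semigroups are Hausdorff, and indeed $(v)$ can fail otherwise (take $S=\{0_S,1_S\}$ indiscrete, $\lambda\geqslant 2$).
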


\begin{proof}
$(i)$ Let $W\not\ni 0$ be an open set in
$\left(B^0_{\lambda}(S),\tau_{B}\right)$ such that $W\cap
S_{\alpha,\alpha}\in\tau_{B}|_{S_{\alpha,\alpha}}$. Suppose that
$W\nsubseteq S_{\alpha,\alpha}^*$. We fix $(\alpha,x,\alpha)\in
W$. Since $(\alpha,1_S,\alpha)\cdot(\alpha,x,\alpha)\cdot
(\alpha,1_s,\alpha)=(\alpha,x,\alpha)$, there exists an open
neighbourhood $U$ of the point $(\alpha,x,\alpha)$ such that
$U\subseteq W$ and $(\alpha,1_S,\alpha)\cdot U\cdot
(\alpha,1_s,\alpha)\subseteq W$. If $U\nsubseteq
S_{\alpha,\alpha}^*$, then $0\in(\alpha,1_S,\alpha)\cdot U\cdot
(\alpha,1_s,\alpha)\subseteq W$, a contradiction. Therefore for
any $(\alpha,x,\alpha)\in W$ there exists an open neighbourhood of
$(\alpha,x,\alpha)$ such that $U\subseteq S_{\alpha,\alpha}^*$,
and hence $W\cap S_{\alpha,\alpha}^*$ is an open subset in
$\left(B^0_{\lambda}(S),\tau_{B}\right)$.

By Definition~\ref{definition4-0} the set $A_{\alpha,\alpha}$ is
open for some $\alpha\in I_{\lambda}$. Since the map
$\varphi^{\alpha\alpha}_{\gamma\delta}\colon B^0_{\lambda}(S)
\rightarrow B^0_{\lambda}(S)$ defined by the formula
$(x)\varphi^{\alpha\alpha}_{\gamma\delta}=(\alpha,1_S,\gamma)\cdot
x\cdot (\delta,1_S,\alpha)$ is continuous, we get that the set
$A_{\gamma,\delta}=
(A_{\alpha,\alpha})\big(\varphi^{\alpha\alpha}_{\gamma\delta}\big)^{-1}$
is open in $\left(B^0_{\lambda}(S),\tau_{B}\right)$ for any
$\gamma,\delta\in I_{\lambda}$.

Let $A\ni 0$ be an open subset in $S$ and $W$ an open subset in
$\left(B^0_{\lambda}(S),\tau_{B}\right)$ such that $W\cap
S_{\alpha,\alpha}=A_{\alpha,\alpha}$ for some $\alpha\in
I_\lambda$. Since the map $\varphi^{\alpha\alpha}_{\gamma\delta}$
is continuous for any $\alpha,\gamma,\delta\in I_\lambda$, the set
\begin{equation*}
    \widetilde{A}_{\gamma,\delta}=
    \bigcup_{(\iota,\kappa)\in(I_\lambda\times
    I_\lambda)\setminus(\gamma,\delta)}S_{\iota,\kappa} \cup
    A_{\gamma,\delta}
\end{equation*}
is an open subset in $\left(B^0_{\lambda}(S),\tau_{B}\right)$.
Then since the set $I_\lambda$ is finite, we have that
$\displaystyle\bigcup_{\alpha,\beta\in
I_\lambda}{A_{\alpha,\beta}}=\bigcap_{\gamma,\delta\in
I_\lambda}\widetilde{A}_{\gamma,\delta}$ and this implies
statement $(ii)$.

Statements $(iii)-(vi)$ follow from $(i)$ and $(ii)$.
\end{proof}

\begin{remark}\label{remark4-4}
Note
that the statements $(i)$, $(iii)$, $(iv)$ and $(v)$ of
Proposition~\ref{proposition4-3} hold for any infinite cardinal
$\lambda$.
However,
Example~21 and Proposition~25 of
\cite{GutikPavlyk2005} imply that the statements $(ii)$ and $(vi)$
are
false
for any infinite cardinal $\lambda$.
\end{remark}

We shall
need the following lemma from~\cite{GutikPavlyk2006}:

\begin{lemma}[{\cite[Lemma~1]{GutikPavlyk2006}}]\label{lemma4-1}
Let $\lambda\geqslant 2$ be any cardinal and $B^0_{\lambda}(S)$ 
the topological Brandt $\lambda^0$-extension of a topological monoid
$S$ with zero. Let $T$ be a topological semigroup and $h\colon
B^0_{\lambda}(S)\to T$ be a continuous homomorphism. Then the sets
$(A_{\alpha\beta})h$ and $(A_{\gamma\delta})h$ are homeomorphic in
$T$ for all $\alpha, \beta, \gamma, \delta\in I_{\lambda}$, and
all $A\subseteq S$.
\end{lemma}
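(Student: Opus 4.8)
\textbf{Proof proposal for Lemma~\ref{lemma4-1}.}
The plan is to realize the passage from the ``$(\alpha,\beta)$-layer'' to the ``$(\gamma,\delta)$-layer'' of $B^0_\lambda(S)$ as the restriction of a continuous selfmap of $B^0_\lambda(S)$, and then transport this through $h$. Concretely, following the idea already used in Proposition~\ref{proposition2-2} (the maps $\varphi^{(\gamma,\delta)}_{(\alpha,\beta)}$ there) but now in the topological category, I would fix the distinguished index $\alpha_0\in I_\lambda$ from Definition~\ref{definition4-0}(b) and first treat the case $(\gamma,\delta)=(\alpha_0,\alpha_0)$; the general case follows by composing two such maps.

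First I would define, for any $\beta,\gamma,\delta\in I_\lambda$, the map
$\xi_{\alpha\beta}^{\gamma\delta}\colon B^0_\lambda(S)\to B^0_\lambda(S)$ by
\begin{equation*}
    (x)\xi_{\alpha\beta}^{\gamma\delta}=(\gamma,1_S,\alpha)\cdot x\cdot(\beta,1_S,\delta),
\end{equation*}
which is continuous because multiplication in $B^0_\lambda(S)$ is (separately, hence here) continuous and the elements $(\gamma,1_S,\alpha)$, $(\beta,1_S,\delta)$ are fixed. A direct check from the multiplication rule shows that on $A_{\alpha\beta}$ this map agrees with $(\alpha,a,\beta)\mapsto(\gamma,a,\delta)$, so $(A_{\alpha\beta})\xi_{\alpha\beta}^{\gamma\delta}=A_{\gamma\delta}$ if $0\notin A$, and more generally it sends $A_{\alpha,\beta}$ onto $A_{\gamma,\delta}$ when $0\in A$. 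Moreover $\xi_{\alpha\beta}^{\gamma\delta}$ and $\xi_{\gamma\delta}^{\alpha\beta}$ restrict to mutually inverse bijections between $A_{\alpha\beta}$ and $A_{\gamma\delta}$, exactly as in Proposition~\ref{proposition2-2}. Now consider the composite continuous map
\begin{equation*}
    \eta\colon B^0_\lambda(S)\xrightarrow{\ \xi_{\alpha\beta}^{\gamma\delta}\ }B^0_\lambda(S)\xrightarrow{\ h\ }T .
\end{equation*}
On $A_{\alpha\beta}$ we have $\eta|_{A_{\alpha\beta}}=(h|_{A_{\gamma\delta}})\circ(\xi_{\alpha\beta}^{\gamma\delta}|_{A_{\alpha\beta}})$, and since $\xi_{\alpha\beta}^{\gamma\delta}|_{A_{\alpha\beta}}$ is a bijection onto $A_{\gamma\delta}$, we get $(A_{\alpha\beta})h=(A_{\alpha\beta})\eta=(A_{\gamma\delta})h$ \emph{as subsets of} $T$; so in fact the two images \emph{coincide}, not merely are homeomorphic. (For general $(\gamma,\delta)$ versus $(\alpha,\beta)$ one either applies this directly, or passes through the layer $(\alpha_0,\alpha_0)$ twice; either way the images are equal.) Hence the homeomorphism claimed in the lemma can even be taken to be the identity on the common image; but to produce the homeomorphism in the stated form I would take the restriction $h|_{A_{\alpha\beta}}\colon A_{\alpha\beta}\to (A_{\alpha\beta})h$ composed with the inverse of $h|_{A_{\gamma\delta}}$ — no, more cleanly: the map $\xi_{\alpha\beta}^{\gamma\delta}|_{A_{\alpha\beta}}\colon A_{\alpha\beta}\to A_{\gamma\delta}$ is a homeomorphism, and $h$ carries it to a homeomorphism $(A_{\alpha\beta})h\to(A_{\gamma\delta})h$ provided one checks that $h$ restricted to each layer is a quotient/closed enough map onto its image.

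The step I expect to be the genuine obstacle is precisely that last point: a continuous bijection need not be a homeomorphism, so from ``$\xi_{\alpha\beta}^{\gamma\delta}|_{A_{\alpha\beta}}$ is a homeomorphism $A_{\alpha\beta}\to A_{\gamma\delta}$'' one cannot immediately conclude that $h$ induces a homeomorphism between the images. The fix is to exploit the symmetry: since $\xi_{\gamma\delta}^{\alpha\beta}|_{A_{\gamma\delta}}$ is the two-sided inverse of $\xi_{\alpha\beta}^{\gamma\delta}|_{A_{\alpha\beta}}$ and is itself a restriction of a continuous selfmap of $B^0_\lambda(S)$, the map $h\circ\xi_{\gamma\delta}^{\alpha\beta}$ furnishes a continuous map $(A_{\gamma\delta})h\to(A_{\alpha\beta})h$ that is inverse to the one induced by $h\circ\xi_{\alpha\beta}^{\gamma\delta}$ on images — here one uses that $h$ intertwines $\xi_{\alpha\beta}^{\gamma\delta}$ with a map on $T$ well-defined on $(A_{\alpha\beta})h$, which holds because $h$ is a homomorphism and the $\xi$'s are given by left/right translations by images of fixed idempotent-like elements $(\gamma,1_S,\alpha)h$, $(\beta,1_S,\delta)h$ in $T$. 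Thus both induced maps on images are continuous and mutually inverse, giving the desired homeomorphism. I would write the argument by defining the continuous selfmaps $\lambda_t,\rho_t$ of $T$ (left/right translation by $t=(\gamma,1_S,\alpha)h$, etc.), noting $h\circ\xi_{\alpha\beta}^{\gamma\delta}=\lambda_{t_1}\circ\rho_{t_2}\circ h$, and concluding that the restriction of $\lambda_{t_1}\circ\rho_{t_2}$ to $(A_{\alpha\beta})h$ is the required homeomorphism onto $(A_{\gamma\delta})h$, with the reverse translations giving its inverse.
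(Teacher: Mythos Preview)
Your last paragraph is exactly the paper's proof: one defines the continuous selfmaps of $T$
\[
(s)\varphi^{\gamma\delta}_{\alpha\beta}=((\gamma,1_S,\alpha))h\cdot s\cdot((\beta,1_S,\delta))h,\qquad
(s)\varphi^{\alpha\beta}_{\gamma\delta}=((\alpha,1_S,\gamma))h\cdot s\cdot((\delta,1_S,\beta))h,
\]
checks (using only that $h$ is a homomorphism) that they restrict to mutually inverse bijections between $(A_{\alpha\beta})h$ and $(A_{\gamma\delta})h$, and concludes these restrictions are homeomorphisms since both directions are continuous. Your detour through the selfmaps $\xi^{\gamma\delta}_{\alpha\beta}$ of $B^0_\lambda(S)$ is unnecessary; the paper works directly in $T$ from the start.

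The middle of your proposal, however, contains a genuine error, not just clutter: the claim ``$(A_{\alpha\beta})h=(A_{\alpha\beta})\eta=(A_{\gamma\delta})h$ \emph{as subsets of} $T$'' is false. From your definition of $\eta$ (first apply $\xi^{\gamma\delta}_{\alpha\beta}$, then $h$) you correctly get $(A_{\alpha\beta})\eta=(A_{\gamma\delta})h$, but there is no reason for this to equal $(A_{\alpha\beta})h$; whenever $h$ is injective (e.g.\ $h$ the identity on $B^0_\lambda(S)$) and $(\alpha,\beta)\neq(\gamma,\delta)$ the two images are disjoint. So ``the homeomorphism can even be taken to be the identity on the common image'' is simply wrong, and that passage should be deleted rather than left as a false shortcut. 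You yourself sense something is off when you write ``no, more cleanly'' and pivot to the translations in $T$ --- that pivot is the whole proof, and everything before it can be dropped.
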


\begin{proof}
If $h$ is an annihilating homomorphism, then the statement of the
lemma is trivial.

Otherwise, we fix arbitrary $\alpha, \beta, \gamma, \delta\in
I_{\lambda}$ and define the maps
$\varphi^{\gamma\delta}_{\alpha\beta}\colon T\to T$ and
$\varphi^{\alpha\beta}_{\gamma\delta}\colon T\to T$ by the
formulae
\begin{equation*}
(s)\varphi^{\gamma\delta}_{\alpha\beta}=((\gamma, 1,
\alpha))h\cdot s\cdot ((\beta, 1, \delta))h
 \qquad \text{~and~} \qquad
 (s)\varphi_{\gamma\delta}^{\alpha\beta}=((\alpha, 1,
\gamma))h\cdot s\cdot ((\delta, 1, \beta))h,
\end{equation*}
$s\in T$. Obviously,
\begin{equation*}
\left(\!\big(\!\left((\alpha,x,
\beta)\right)h\big)\varphi^{\gamma\delta}_{\alpha\beta}\right)
\varphi_{\gamma\delta}^{\alpha\beta}= \left((\alpha, x,
\beta)\right)h
 \qquad \text{~and~} \qquad
\left(\!\big(\!\left((\gamma, x, \delta)\right)h
\big)\varphi_{\gamma\delta}^{\alpha\beta}
\right)\varphi^{\gamma\delta}_{\alpha\beta}= \left((\gamma, x,
\delta)\right)h,
\end{equation*}
for all  $\alpha, \beta, \gamma, \delta\in I_{\lambda}$, $x\in
S^1$, and hence
$\varphi^{\gamma\delta}_{\alpha\beta}\mid_{A_{\alpha\beta}}=
(\varphi_{\gamma\delta}^{\alpha\beta})^{-1}\mid_{A_{\alpha\beta}}$.
Since the maps $\varphi^{\gamma\delta}_{\alpha\beta}$ and
$\varphi_{\gamma\delta}^{\alpha\beta}$ are continuous on $T$, the
map
$\varphi^{\gamma\delta}_{\alpha\beta}\mid_{h(A_{\alpha\beta})}\colon
h(A_{\alpha\beta})\to h(A_{\gamma\delta})$ is a homeomorphism.
\end{proof}

\begin{proposition}\label{proposition4-4}
Let $\lambda\geqslant 1$ be any cardinal and $B^0_{\lambda}(S)$ 
the topological Brandt $\lambda^0$-extension of a topological
(inverse) monoid $S$ with zero in the class of topological
(inverse) semigroups $\mathfrak{S}$. Let $T\in\mathfrak{S}$ and
$h\colon B^0_{\lambda}(S)\to T$ be a continuous homomorphism. Then
the image $(B^0_{\lambda}(S))h$ is the topological Brandt
$\lambda^0$-extension of some monoid $M\in\mathfrak{S}$ with zero
in the class $\mathfrak{S}$.
\end{proposition}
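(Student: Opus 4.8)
The plan is to combine the purely algebraic description of homomorphic images of $B^0_\lambda(S)$ from Proposition~\ref{proposition2-2} with the topological "coordinate-swap" argument already used in Lemma~\ref{lemma4-1} and Proposition~\ref{proposition4-3}. First I would dispose of the trivial case: if $h$ is annihilating (equivalently, trivial on $B^0_\lambda(1)$ when $\lambda\geqslant 2$, by Proposition~\ref{proposition2-1}), then $(B^0_\lambda(S))h$ is a one-point semigroup, which is $B^0_1$ of a trivial monoid with zero, and we are done; the case $\lambda=1$ is immediate since $B^0_1(S)=S$. So assume $\lambda\geqslant 2$ and $h$ non-trivial, and (replacing $T$ by the image) that $h$ is surjective.

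Next I would invoke Proposition~\ref{proposition2-2}: algebraically, $T=(B^0_\lambda(S))h$ is the Brandt $\lambda^0$-extension $B^0_\lambda(M)$ of the monoid with zero $M=(S_{\alpha_0,\alpha_0})h$ for any fixed $\alpha_0\in I_\lambda$, via the identification $T=I_\lambda\times M^*\times I_\lambda\cup\{0_T\}$ coming from the maps $\varphi^{(\gamma,\delta)}_{(\alpha,\beta)}$. Here $M$ carries the subspace topology inherited from $T$ on the block $T_{\alpha_0,\alpha_0}=((\alpha_0,1_S,\alpha_0))h\cdot T\cdot((\alpha_0,1_S,\alpha_0))h$, which is closed under multiplication and inversion (when $S$ is inverse), so $M\in\mathfrak{S}$. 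It then remains to check the two clauses of Definition~\ref{definition4-0}: (b) that the subspace topology on one diagonal block $M_{\alpha_0,\alpha_0}\subseteq T$ agrees with the topology of $M$ — which holds by construction since we \emph{defined} the topology on $M$ as that subspace topology; and (a) that $T$, with its given topology and the Brandt $\lambda^0$-extension coordinatization over $M$, belongs to $\mathfrak{S}$ — but $T$ is a (inverse) subsemigroup-quotient sitting inside $\mathfrak{S}$ already by hypothesis, since $\mathfrak{S}$ is the ambient class and $h$ is a continuous homomorphism into $T\in\mathfrak{S}$.

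The genuine content, and the step I expect to be the main obstacle, is verifying that the algebraic isomorphism $T\cong B^0_\lambda(M)$ supplied by Proposition~\ref{proposition2-2} is a \emph{homeomorphism} for the topology $\tau_B$ that $B^0_\lambda(M)$ must carry — equivalently, that the blocks $T_{\beta,\gamma}=((\beta,1_S,\alpha_0))h\cdot T\cdot((\alpha_0,1_S,\gamma))h$ are all homeomorphic to $M$ compatibly, and that these homeomorphisms glue correctly at $0_T$. For $\lambda\geqslant 2$ this is exactly Lemma~\ref{lemma4-1}: the maps $\varphi^{\gamma\delta}_{\alpha\beta}$, $\varphi^{\alpha\beta}_{\gamma\delta}$ given there are continuous on $T$ (left/right translations by fixed elements in a topological semigroup) and are mutually inverse on the relevant blocks, hence restrict to homeomorphisms $h(A_{\alpha\beta})\to h(A_{\gamma\delta})$ for every $A\subseteq S$. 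Taking $A$ ranging over a neighbourhood base shows that the block decomposition of $T$ carries precisely the topology that makes $T$ the topological Brandt $\lambda^0$-extension of $M$: away from $0_T$ the open sets are unions of translated images of opens of $M$, and at $0_T$ continuity of multiplication forces the behaviour at zero to be the required one. Care must be taken for $\lambda$ infinite, where (cf. Remark~\ref{remark4-4}) the zero neighbourhood clauses of Proposition~\ref{proposition4-3} fail; but the statement we need is only that $(B^0_\lambda(S))h$ \emph{is} a topological Brandt $\lambda^0$-extension of $M$ in the sense of Definition~\ref{definition4-0}, which asks for a topology on $B^0_\lambda(M)$ satisfying (a) and (b), not that this topology be the "canonical" one — so no product-type description at zero is claimed, and the argument goes through for all $\lambda\geqslant 1$.
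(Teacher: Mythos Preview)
Your proposal is correct and follows essentially the same route as the paper: invoke Proposition~\ref{proposition2-2} for the algebraic structure, use Lemma~\ref{lemma4-1} to see that the blocks of the image are mutually homeomorphic so that the subspace topology on $(B^0_\lambda(S))h$ satisfies Definition~\ref{definition4-0}, and observe that a subsemigroup of a topological (inverse) semigroup is again one. The paper's own proof is much terser---three sentences---and for the inverse case cites Proposition~II.2 of Eberhart--Selden rather than arguing directly, but the content is the same; your explicit handling of the infinite-$\lambda$ case (noting that Definition~\ref{definition4-0} does not prescribe the topology at zero) is a detail the paper leaves implicit.
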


\begin{proof}
Proposition~\ref{proposition2-2} implies the algebraic part of the
proposition. Since a subsemigroup of a topological semigroup is a
topological semigroup, Lemma~\ref{lemma4-1} implies that
$(B^0_{\lambda}(S))h$ is the topological Brandt
$\lambda^0$-extension of $(S_{\alpha,\alpha})h$ for some $a\in
I_\lambda$. Also if $S$ and $T$ are topological inverse
semigroups, then by Proposition~II.2
of~\cite{EberhartSelden1969} the image $(B^0_{\lambda}(S))h$ is
a topological inverse semigroup.
\end{proof}

Corollary~\ref{corollary2-3} and Proposition~\ref{proposition4-4}
imply:

\begin{corollary}\label{corollary4-5}
Let $B_\lambda(G)$ be a topological (inverse) Brandt semigroup.
Let $T$ be a topological (inverse) semigroup and $h\colon
B_{\lambda}(G)\to T$ be a continuous homomorphism. Then the image
$(B_{\lambda}(G))h$ is a topological (inverse) Brandt semigroup.
\end{corollary}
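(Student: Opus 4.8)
The plan is to reduce everything to Proposition~\ref{proposition4-4} together with Corollary~\ref{corollary2-3}. First I would recall that, by definition, a Brandt semigroup is the Brandt $\lambda^0$-extension of a group with adjoined zero, so algebraically $B_\lambda(G)=B^0_\lambda(G^0)$. I would then equip $G^0$ with the topology $\tau$ it inherits as the subspace $\{(\alpha,g,\alpha)\mid g\in G\}\cup\{0\}$ of $B_\lambda(G)$, for a fixed $\alpha\in I_\lambda$. With this topology $G^0$ is a topological monoid with zero (a subsemigroup of the topological semigroup $B_\lambda(G)$, with unit the identity of $G$), and in the inverse case it is a topological inverse monoid with zero, being closed under the inversion of $B_\lambda(G)$. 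Writing $\mathfrak{S}$ for the class of topological (inverse) semigroups to which $B_\lambda(G)$ belongs, conditions (a) and (b) of Definition~\ref{definition4-0} then hold for $(B^0_\lambda(G^0),\tau_B)=B_\lambda(G)$: (a) because $B_\lambda(G)\in\mathfrak{S}$ by hypothesis, and (b) by the very choice of $\tau$ on $G^0$. Hence $B_\lambda(G)$ is the topological Brandt $\lambda^0$-extension of $(G^0,\tau)$ in $\mathfrak{S}$.

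Assuming $h$ non-trivial, I would next apply Proposition~\ref{proposition4-4} with $S=G^0$: it gives that $(B_\lambda(G))h=(B^0_\lambda(G^0))h$ is the topological Brandt $\lambda^0$-extension, in the class $\mathfrak{S}$, of some monoid $M\in\mathfrak{S}$ with zero, and --- by the construction in Propositions~\ref{proposition2-2} and~\ref{proposition4-4} --- $M$ is the homomorphic image $(G^0_{\alpha,\alpha})h$ of the monoid $G^0_{\alpha,\alpha}\cong G^0$. Independently, Corollary~\ref{corollary2-3} says that the algebraic image $(B_\lambda(G))h$ is a Brandt semigroup, hence isomorphic to $B^0_\lambda(G_1^0)$ for some group $G_1$. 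Comparing the two descriptions forces $M\cong G_1^0$; concretely, $(G^0_{\alpha,\alpha})h$ consists of the group $(G)h$ together with the adjoined zero $(0)h=0_T$, and $0_T$ is distinct from $(G)h$ because the restriction of $h$ to $B^0_\lambda(1)$ is injective. Therefore $(B_\lambda(G))h$ is the topological Brandt $\lambda^0$-extension of the topological group-with-zero $G_1^0$ in $\mathfrak{S}$, i.e. a topological (inverse) Brandt semigroup; in the inverse case the continuity of inversion on the image is also guaranteed by Proposition~II.2 of~\cite{EberhartSelden1969}, as already invoked in Proposition~\ref{proposition4-4}.

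If $h$ is trivial, then $(B_\lambda(G))h=\{0_T\}$ is a one-point semigroup, which one regards as a degenerate topological Brandt semigroup, so the statement holds in this case too. The only step that requires genuine care is the first paragraph: one must verify that $(G^0,\tau)$ really is an object of the relevant class $\mathfrak{S}$ (a topological, and in the inverse case topological inverse, monoid with zero) and that the inherited topology on the diagonal copy of $G^0$ is exactly the one making Definition~\ref{definition4-0}(b) hold. Once that set-up is in place, the corollary is a direct consequence of Corollary~\ref{corollary2-3} and Proposition~\ref{proposition4-4}.
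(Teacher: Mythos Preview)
Your proposal is correct and follows exactly the route the paper indicates: the corollary is stated there as an immediate consequence of Corollary~\ref{corollary2-3} and Proposition~\ref{proposition4-4}, and you simply unpack why those two results apply (identifying $B_\lambda(G)$ with the topological Brandt $\lambda^0$-extension $B^0_\lambda(G^0)$ via Definition~\ref{definition4-0}, and treating the trivial case separately). The extra care you take in verifying condition~(b) of Definition~\ref{definition4-0} and the inverse-semigroup case is appropriate but not a departure from the paper's argument.
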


Proposition~\ref{proposition4-3} and Lemma~\ref{lemma4-1} imply
the following:

\begin{lemma}\label{lemma4-5a}
For any topological monoid $(S,\tau)$ with zero and for any finite
cardinal $\lambda\geqslant 1$ there exists a unique topological
Brandt $\lambda^0$-extension $\left(B^0_{\lambda}(S),
\tau_{B}\right)$ and the topology $\tau_{B}$ generated by the base
$\mathscr{B}_B=\bigcup\{\mathscr{B}_B(t)\mid t\in
B^0_{\lambda}(S)\}$, where:
\begin{itemize}
    \item[$(i)$] $\mathscr{B}_B(t)=\{(U(s))_{\alpha,\beta}
    \setminus\{ 0_S\}\mid U(s)\in\mathscr{B}_S(s)\}$, when
    $t=(\alpha,s,\beta)$ is a non-zero element of
    $B^0_{\lambda}(S)$, $\alpha,\beta\in I_\lambda$;
    \item[$(ii)$] $\mathscr{B}_B(0)=\{\bigcup_{\alpha,\beta\in
    I_\lambda}
    (U(0_S))_{\alpha,\beta}\mid U(0_S)\in\mathscr{B}_S(0_S)\}$,
    when $0$ is the zero of $B^0_{\lambda}(S)$,
\end{itemize}
and $\mathscr{B}_S(s)$ is a base of the topology $\tau$ at the
point $s\in S$.

Moreover, if $\lambda\geqslant 1$ is any finite cardinal then a
topological monoid $(S,\tau)$ with zero is a topological inverse
semigroup if and only if $\left(B^0_{\lambda}(S), \tau_{B}\right)$
is the topological Brandt $\lambda^0$-extension of $(S,\tau)$ in
the class of topological inverse semigroups.
\end{lemma}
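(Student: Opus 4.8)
The plan is to exhibit the topology $\tau_B$ via the base $\mathscr{B}_B$ given in the statement, to verify by hand that $\big(B^0_\lambda(S),\tau_B\big)$ is a topological Brandt $\lambda^0$-extension of $(S,\tau)$, to deduce its uniqueness from Proposition~\ref{proposition4-3} (this is where the finiteness of $\lambda$ is used), and finally to settle the inverse assertion separately. For $\lambda=1$ one has $B^0_1(S)=S$ and every claim is trivial, so assume $\lambda\geqslant 2$. First I would check that $\mathscr{B}_B$ is a neighbourhood base for a topology: every member of $\mathscr{B}_B(t)$ contains $t$; each $\mathscr{B}_B(t)$ is downward directed because the bases $\mathscr{B}_S(s)$ and $\mathscr{B}_S(0_S)$ are; and if $t'$ lies in a member $W$ of $\mathscr{B}_B(t)$, then a basic $S$-neighbourhood of the middle coordinate of $t'$ sitting inside the $S$-set defining $W$ produces a member of $\mathscr{B}_B(t')$ contained in $W$ — for the zero $0$ of $B^0_\lambda(S)$ this step uses that $I_\lambda$ is finite, so that $\bigcup_{\alpha,\beta\in I_\lambda}(U(0_S))_{\alpha,\beta}$ really is a neighbourhood of each of its non-zero points, exactly as in the proof of Proposition~\ref{proposition4-3}$(ii)$. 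That $\tau_B|_{S_{\alpha,\alpha}}=\tau$ for every $\alpha$ is then immediate, since intersecting a basic neighbourhood of $(\alpha,s,\alpha)$, or of $0$, with $S_{\alpha,\alpha}$ returns the copy inside $S_{\alpha,\alpha}$ of the $S$-set defining it.

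The heart of the proof is joint continuity of the multiplication $m$ of $\big(B^0_\lambda(S),\tau_B\big)$. Fix $x,y$ and a basic $\tau_B$-neighbourhood $W$ of $xy$; it suffices to produce basic neighbourhoods $N_x\ni x$, $N_y\ni y$ with $N_x\cdot N_y\subseteq W$. If $xy=0$ — which happens when one of $x,y$ is $0$, or the middle indices of $x$ and $y$ do not match, or they match but the corresponding product in $S$ is $0_S$ — then $W\supseteq\bigcup_{\gamma,\delta\in I_\lambda}(U(0_S))_{\gamma,\delta}$ for some $U(0_S)\in\mathscr{B}_S(0_S)$, and a short case analysis invoking continuity of the multiplication of $S$ at the relevant point (products involving $0_S$ being $0_S$) yields $N_x,N_y$ all of whose non-zero products lie in $U(0_S)$, whence $N_x\cdot N_y\subseteq\bigcup_{\gamma,\delta}(U(0_S))_{\gamma,\delta}\subseteq W$. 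If $xy\neq 0$, then $x=(\alpha,a,\beta)$, $y=(\beta,b,\delta)$ with $ab\neq 0_S$ and $W=(O)_{\alpha,\delta}\setminus\{0_S\}$ for some $O\in\mathscr{B}_S(ab)$, and one must choose $U\ni a$, $V\ni b$ in $S$ with $U\cdot V\subseteq O$ \emph{and} $0_S\notin U\cdot V$; then $N_x=(U)_{\alpha,\beta}\setminus\{0_S\}$ and $N_y=(V)_{\beta,\delta}\setminus\{0_S\}$ do the job. Producing a product-neighbourhood that simultaneously lands in the prescribed block and avoids the zero $0_S$ of $S$ is the step I expect to be the main obstacle; it rests on continuity of the multiplication of $S$ at $(a,b)$ together with $ab\neq 0_S$. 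Granting this, $\big(B^0_\lambda(S),\tau_B\big)$ is a topological semigroup, hence a topological Brandt $\lambda^0$-extension of $(S,\tau)$.

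For uniqueness, let $\tau'$ be a topology on $B^0_\lambda(S)$ with $\big(B^0_\lambda(S),\tau'\big)$ a topological semigroup and $\tau'|_{S_{\alpha_0,\alpha_0}}=\tau$ for some $\alpha_0\in I_\lambda$. Applying Lemma~\ref{lemma4-1} with $h$ the identity self-map of $\big(B^0_\lambda(S),\tau'\big)$ shows that the blocks $A_{\gamma,\delta}$, $A\subseteq S$, carry mutually homeomorphic traces of $\tau'$, so the restriction of $\tau'$ to the single block $S_{\alpha_0,\alpha_0}$ determines $\tau'$ on every block; Proposition~\ref{proposition4-3}$(v)$ then identifies the $\tau'$-neighbourhood base at each non-zero point with $\mathscr{B}_B(t)$, and Proposition~\ref{proposition4-3}$(vi)$ — which holds precisely because $\lambda$ is finite — identifies the $\tau'$-neighbourhood base at $0$ with $\mathscr{B}_B(0)$. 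Hence $\tau'=\tau_B$, and by Remark~\ref{remark4-4} this uniqueness fails once $\lambda$ is infinite.

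It remains to handle the inverse assertion. If $\big(B^0_\lambda(S),\tau_B\big)$ is a topological Brandt $\lambda^0$-extension of $(S,\tau)$ in the class of topological inverse semigroups, then $S_{\alpha,\alpha}$ is a subsemigroup of $B^0_\lambda(S)$ closed under inversion, since $(\alpha,s,\alpha)^{-1}=(\alpha,s^{-1},\alpha)$; with the subspace topology, which equals $\tau$, it is therefore a topological inverse semigroup isomorphic to $S$, so $(S,\tau)$ is a topological inverse semigroup. Conversely, if $(S,\tau)$ is a topological inverse semigroup, then by the Gutik--Pavlyk criterion recalled earlier $B^0_\lambda(S)$ is algebraically an inverse semigroup, with $(\alpha,s,\beta)^{-1}=(\beta,s^{-1},\alpha)$ and $0^{-1}=0$; continuity of this inversion on $\big(B^0_\lambda(S),\tau_B\big)$ is read off the base, a basic neighbourhood $(U(s))_{\alpha,\beta}\setminus\{0_S\}$ of a non-zero point being carried into $\big(U(s)^{-1}\big)_{\beta,\alpha}\setminus\{0_S\}$ — a basic neighbourhood of $(\beta,s^{-1},\alpha)$ once $U(s)$ is chosen so that $U(s)^{-1}$ sits inside a prescribed neighbourhood of $s^{-1}$ — and $\bigcup_{\alpha,\beta}(U(0_S))_{\alpha,\beta}$ being carried into $\bigcup_{\alpha,\beta}\big(U(0_S)^{-1}\big)_{\alpha,\beta}$, all this by continuity of inversion in $S$ at $s$ and at $0_S$ and the identity $0_S^{-1}=0_S$. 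Thus $\big(B^0_\lambda(S),\tau_B\big)$ is a topological inverse semigroup with $\tau_B|_{S_{\alpha,\alpha}}=\tau$, which by Definition~\ref{definition4-0} is exactly what is required.
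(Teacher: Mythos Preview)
Your argument is correct and aligns with the paper's own justification, which consists solely of the sentence ``Proposition~\ref{proposition4-3} and Lemma~\ref{lemma4-1} imply the following''; your uniqueness step via Proposition~\ref{proposition4-3}(v),(vi) is precisely what that citation encodes, while your direct verification that $\tau_B$ is a semigroup (and, in the last paragraph, inverse-semigroup) topology supplies the existence half that the paper leaves implicit. One remark on the step you flag as the main obstacle: securing $0_S\notin U\cdot V$ from $ab\neq 0_S$ and continuity alone also uses that $\{0_S\}$ is closed in $S$---this is part of the paper's standing Hausdorff convention for topological semigroups (inherited from \cite{CarruthHildebrantKoch}), so one may first shrink $O$ to $O\setminus\{0_S\}$ and then choose $U,V$ with $U\cdot V\subseteq O$.
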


The topological Brandt $\lambda^0$-extension
$\left(B^0_{\lambda}(S), \tau_{B}\right)$ is called \emph{compact}
(resp., \emph{countably compact}) if the topological space
$\left(B^0_{\lambda}(S), \tau_{B}\right)$ is compact (resp.,
countably compact).

Propositions~\ref{proposition4-5b} and~\ref{proposition4-5c}
describe the structures of compact Brandt $\lambda^0$-extensions
and countably compact Brandt $\lambda^0$-extensions in the class
of topological inverse semigroups.

\begin{proposition}\label{proposition4-5b}
A topological Brandt $\lambda^0$-extension $B^0_{\lambda}(S)$ of a
topological monoid $(S,\tau)$ with zero is compact if and only if
the cardinal $\lambda\geqslant 1$ is finite and $(S,\tau)$ is a
compact topological semigroup. Moreover, for any compact
topological monoid $(S,\tau)$ with zero and for any finite
cardinal $\lambda\geqslant 1$ there exists a unique compact
topological Brandt $\lambda^0$-extension $\left(B^0_{\lambda}(S),
\tau_{B}\right)$ and the topology $\tau_{B}$ generated by the base
$\mathscr{B}_B=\bigcup\{\mathscr{B}_B(t)\mid t\in
B^0_{\lambda}(S)\}$, where:
\begin{itemize}
    \item[$(i)$] $\mathscr{B}_B(t)=\{(U(s))_{\alpha,\beta}
    \setminus\{ 0_S\}\mid U(s)\in\mathscr{B}_S(s)\}$, when
    $t=(\alpha,s,\beta)$ is a non-zero element of
    $B^0_{\lambda}(S)$, $\alpha,\beta\in I_\lambda$;
    \item[$(ii)$] $\mathscr{B}_B(0)=\{\bigcup_{\alpha,\beta\in
    I_\lambda}
    (U(0_S))_{\alpha,\beta}\mid U(0_S)\in\mathscr{B}_S(0_S)\}$,
    when $0$ is the zero of $B^0_{\lambda}(S)$,
\end{itemize}
and $\mathscr{B}_S(s)$ is a base of the topology $\tau$ at the
point $s\in S$.
\end{proposition}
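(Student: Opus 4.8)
The plan is to prove the two implications of the equivalence separately and then read off the ``moreover'' clause from Lemma~\ref{lemma4-5a}; throughout, topological semigroups are understood to be Hausdorff, following the conventions of~\cite{CarruthHildebrantKoch}, a hypothesis that is used in the ``only if'' direction. First suppose $\left(B^0_{\lambda}(S),\tau_{B}\right)$ is compact. To get that $(S,\tau)$ is compact, fix an index $\alpha\in I_\lambda$ as in Definition~\ref{definition4-0}(b) and consider the map $r\colon B^0_{\lambda}(S)\to B^0_{\lambda}(S)$, $r(x)=(\alpha,1_S,\alpha)\cdot x\cdot(\alpha,1_S,\alpha)$. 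It is continuous (a composite of translations and the multiplication), its image is exactly $S_{\alpha,\alpha}$, and it restricts to the identity on $S_{\alpha,\alpha}$; hence $S_{\alpha,\alpha}$ is a continuous image of a compact space, so it is compact, and since $\tau_{B}|_{S_{\alpha,\alpha}}=\tau$ the semigroup $(S,\tau)$ is compact. (This step uses nothing about $\lambda$.)

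To get that $\lambda$ is finite, I argue by contradiction: assume $\lambda$ infinite, with $S$ non-trivial so that $1_S\ne 0_S$. I would work inside the subsemigroup $B^0_{\lambda}(1)=\{(\alpha,1_S,\beta)\mid\alpha,\beta\in I_\lambda\}\cup\{0\}$ of $I_\lambda\times I_\lambda$-matrix units. Since $\{1_S,0_S\}$ is closed in the Hausdorff space $S$, Proposition~\ref{proposition4-3}$(iv)$ --- which, by Remark~\ref{remark4-4}, is valid for infinite $\lambda$ as well --- shows $B^0_{\lambda}(1)$ is closed in $B^0_{\lambda}(S)$, hence compact. By Proposition~\ref{proposition4-3}$(v)$ together with the separation of $1_S$ from $0_S$, each point $(\alpha,1_S,\beta)$ is isolated in $B^0_{\lambda}(1)$, so $B^0_{\lambda}(1)\setminus\{0\}$ is an infinite discrete open subspace; consequently the complement in $B^0_{\lambda}(1)$ of any open neighbourhood of $0$ is closed, hence compact and discrete, hence finite, i.e. every neighbourhood of $0$ in $B^0_{\lambda}(1)$ is cofinite. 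Now I invoke joint continuity of the multiplication at $(0,0)$: given such a neighbourhood $W$ of $0$, choose open $V,V'\ni 0$ with $V\cdot V'\subseteq W$; as $V$ and $V'$ omit only finitely many matrix units, for every $\alpha,\delta\in I_\lambda$ there is $\beta\in I_\lambda$ with $(\alpha,1_S,\beta)\in V$ and $(\beta,1_S,\delta)\in V'$ (here the infinitude of $I_\lambda$ enters), whence $(\alpha,1_S,\delta)=(\alpha,1_S,\beta)\cdot(\beta,1_S,\delta)\in V\cdot V'\subseteq W$. So $W$ contains every element of $B^0_{\lambda}(1)$, i.e. $W=B^0_{\lambda}(1)$; thus $0$ has a unique neighbourhood, contradicting Hausdorffness. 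This is the step I expect to be the main obstacle: the \emph{off-diagonal} matrix units are essential here, since products of the diagonal idempotents $(\alpha,1_S,\alpha)$ annihilate each other and do not spread out, so a routine argument on idempotents alone does not suffice.

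Conversely, if $\lambda$ is finite and $(S,\tau)$ is compact, then $B^0_{\lambda}(S)=\bigcup_{\beta,\gamma\in I_\lambda}S_{\beta,\gamma}$ is a \emph{finite} union, and each $S_{\beta,\gamma}$ is homeomorphic to $(S,\tau)$ (by Lemma~\ref{lemma4-1} when $\lambda\ge 2$, and trivially when $\lambda=1$), hence compact; a finite union of compact subsets is compact, so $B^0_{\lambda}(S)$ is compact. Finally, the ``moreover'' clause is immediate from Lemma~\ref{lemma4-5a}: for finite $\lambda$ it already furnishes a unique topological Brandt $\lambda^0$-extension of $(S,\tau)$, whose topology is generated precisely by the base $\mathscr{B}_B$ displayed in the statement; by the converse just proved this unique extension is compact, and its uniqueness among \emph{compact} topological Brandt $\lambda^0$-extensions follows a fortiori from its uniqueness among all of them.
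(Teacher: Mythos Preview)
Your proof is correct and, for the converse implication and the ``moreover'' clause, essentially identical to the paper's: a finite union of copies of $(S,\tau)$ via Lemma~\ref{lemma4-1}, and an appeal to Lemma~\ref{lemma4-5a}. Your argument that $S_{\alpha,\alpha}$ is compact via the retraction $x\mapsto(\alpha,1_S,\alpha)\cdot x\cdot(\alpha,1_S,\alpha)$ is also what the paper does, though the paper packages it as a citation of Theorem~1.7(e) of~\cite{CarruthHildebrantKoch}.

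The one genuine difference is in establishing that $\lambda$ must be finite. The paper dispatches this in one line by invoking Theorem~10 of~\cite{GutikPavlyk2005} (the infinite semigroup of matrix units does not embed into a compact topological semigroup). You instead give a direct, self-contained argument: use Proposition~\ref{proposition4-3}$(iv)$ and Remark~\ref{remark4-4} to see $B^0_\lambda(1)$ is closed, hence compact; use part~$(v)$ to see its nonzero points are isolated, forcing every neighbourhood of $0$ to be cofinite; then exploit joint continuity at $(0,0)$ together with the off-diagonal products $(\alpha,1_S,\beta)(\beta,1_S,\delta)=(\alpha,1_S,\delta)$ to show every neighbourhood of $0$ is all of $B^0_\lambda(1)$, contradicting Hausdorffness. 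Your observation that the off-diagonal elements are essential here is on point. In effect you have reproved the cited result from scratch; this makes the argument independent of~\cite{GutikPavlyk2005} at the cost of a paragraph, while the paper's route is shorter but relies on the external reference.
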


\begin{proof}
Since by Theorem~10 of \cite{GutikPavlyk2005}, the infinite
semigroup of matrix units does not embed into a compact
topological semigroup, the compactness of the topological Brandt
$\lambda^0$-extension $\left(B^0_{\lambda}(S), \tau_{B}\right)$ of
a topological semigroup $(S,\tau)$ implies that the cardinal
$\lambda$ is finite. Then by Theorem~1.7(e)
of~\cite[Vol.~1]{CarruthHildebrantKoch},
$(\alpha,1_S,\alpha)B^0_{\lambda}(S)(\alpha,1_S,\alpha)=S_{\alpha,\alpha}$
is a compact semigroup for any $\alpha\in I_\lambda$, and hence
$(S,\tau)$ is a compact topological semigroup. The converse
follows from Lemma~\ref{lemma4-1} and the assertion that the
finite union of compact spaces is a compact space.

Lemma~\ref{lemma4-5a} implies the last assertion of the
proposition.
\end{proof}

\begin{proposition}\label{proposition4-5c}
The topological Brandt $\lambda^0$-extension $B^0_{\lambda}(S)$ of a
topological monoid $(S,\tau)$ with zero in the class of
topological inverse semigroups is countably compact if and only if
the cardinal $\lambda\geqslant 1$ is finite and $(S,\tau)$ is a
countably compact topological inverse semigroup. Moreover, for any
countably compact topological monoid $(S,\tau)$ with zero and for
any finite cardinal $\lambda\geqslant 1$ there exists a unique
compact topological Brandt $\lambda^0$-extension
$\left(B^0_{\lambda}(S), \tau_{B}\right)$ in the class of
topological inverse semigroups and the topology $\tau_{B}$
generated by the base $\mathscr{B}_B=\bigcup\{\mathscr{B}_B(t)\mid
t\in B^0_{\lambda}(S)\}$, where:
\begin{itemize}
    \item[$(i)$] $\mathscr{B}_B(t)=\{(U(s))_{\alpha,\beta}
    \setminus\{ 0_S\}\mid U(s)\in\mathscr{B}_S(s)\}$, when
    $t=(\alpha,s,\beta)$ is a non-zero element of
    $B^0_{\lambda}(S)$, $\alpha,\beta\in I_\lambda$; and
    \item[$(ii)$] $\mathscr{B}_B(0)=\{\bigcup_{\alpha,\beta\in
    I_\lambda}
    (U(0_S))_{\alpha,\beta}\mid U(0_S)\in\mathscr{B}_S(0_S)\}$,
    when $0$ is the zero of $B^0_{\lambda}(S)$,
\end{itemize}
and $\mathscr{B}_S(s)$ is a base of the topology $\tau$ at the
point $s\in S$.
\end{proposition}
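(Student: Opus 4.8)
The plan is to prove the two implications separately, using Lemma~\ref{lemma4-5a}, Proposition~\ref{proposition4-3} and Lemma~\ref{lemma4-1}; throughout we may assume that $S$ is non-trivial, so that $1_S\neq 0_S$. For necessity, suppose that $\left(B^0_{\lambda}(S),\tau_B\right)$ is a countably compact topological inverse semigroup. The key point is that $\lambda$ must be finite. Assume, to the contrary, that $\lambda$ is infinite and fix pairwise distinct elements $\alpha_0,\alpha_1,\alpha_2,\dots\in I_\lambda$. By countable compactness the sequence $\big(t_n\big)_{n\in\mathbb{N}}$ with $t_n=(\alpha_0,1_S,\alpha_n)$ has an accumulation point $t\in B^0_{\lambda}(S)$. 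Since the maps $x\mapsto xx^{-1}$ and $x\mapsto x^{-1}x$ are continuous on a topological inverse semigroup, $tt^{-1}$ is an accumulation point of the constant sequence $t_nt_n^{-1}=(\alpha_0,1_S,\alpha_0)$, so $tt^{-1}=(\alpha_0,1_S,\alpha_0)$ by the Hausdorff property; in particular $t\neq 0$, say $t=(\mu,s,\nu)$ with $s\in S\setminus\{0_S\}$, and then $ss^{-1}=1_S$, whence $s^{-1}s\neq 0_S$ and $t^{-1}t=(\nu,s^{-1}s,\nu)$ is also a non-zero element of $B^0_{\lambda}(S)$. On the other hand $t^{-1}t$ is an accumulation point of the sequence $t_n^{-1}t_n=(\alpha_n,1_S,\alpha_n)$; but by Proposition~\ref{proposition4-3}$(v)$, which holds for every cardinal $\lambda$ (see Remark~\ref{remark4-4}), the point $t^{-1}t$ has a basic neighbourhood $U_{\nu,\nu}\subseteq S_{\nu,\nu}$, where $U$ is a neighbourhood of $s^{-1}s$ in $S$ with $0_S\notin U$. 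Since the $\alpha_n$ are pairwise distinct, this neighbourhood contains the idempotent $(\alpha_n,1_S,\alpha_n)\in S_{\alpha_n,\alpha_n}$ for at most one $n$, contradicting the fact that $t^{-1}t$ is an accumulation point of $\big((\alpha_n,1_S,\alpha_n)\big)_{n\in\mathbb{N}}$. Hence $\lambda$ is finite.

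Since $\left(B^0_{\lambda}(S),\tau_B\right)$ lies in the class of topological inverse semigroups, Lemma~\ref{lemma4-5a} shows that $(S,\tau)$ is a topological inverse semigroup; and, taking $\alpha\in I_\lambda$ as in Definition~\ref{definition4-0}, the set $S_{\alpha,\alpha}=\{x\in B^0_{\lambda}(S)\mid (\alpha,1_S,\alpha)\cdot x\cdot(\alpha,1_S,\alpha)=x\}$ is closed, hence countably compact, and is isomorphic as a topological semigroup to $(S,\tau)$; therefore $(S,\tau)$ is countably compact. Conversely, assume that $\lambda$ is finite and $(S,\tau)$ is a countably compact topological inverse semigroup. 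By Lemma~\ref{lemma4-5a} there exists a unique topological Brandt $\lambda^0$-extension $\left(B^0_{\lambda}(S),\tau_B\right)$ in the class of topological inverse semigroups, and its topology is generated by the base described in $(i)$ and $(ii)$. From that description of the base (or from Lemma~\ref{lemma4-1} applied to the identity homomorphism of $B^0_{\lambda}(S)$), each $S_{\alpha,\beta}$, $\alpha,\beta\in I_\lambda$, is homeomorphic to $S$ and hence countably compact; and since $\lambda$ is finite, $B^0_{\lambda}(S)=\bigcup_{\alpha,\beta\in I_\lambda}S_{\alpha,\beta}$ is a finite union of countably compact subspaces, so it is countably compact.

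The main obstacle is the first step of necessity, that is, excluding infinite $\lambda$. The delicate points are to check that the accumulation point $t$ is non-zero and that the idempotent $t^{-1}t$ is again non-zero -- this is what makes Proposition~\ref{proposition4-3}$(v)$ applicable and supplies a ``small'' neighbourhood of $t^{-1}t$ meeting at most one of the idempotents $(\alpha_n,1_S,\alpha_n)$ -- together with the elementary observation that a continuous map sends an accumulation point of a sequence to an accumulation point of the image sequence. The remaining ingredients -- closedness of $S_{\alpha,\alpha}$ in $B^0_{\lambda}(S)$, the identification $S_{\alpha,\beta}\cong S$, and the stability of countable compactness under closed subspaces and finite unions -- are routine.
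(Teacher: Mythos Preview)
Your proof is correct and complete, but it takes a genuinely different route from the paper in the key step of necessity. The paper does not argue directly: it invokes Theorem~14 of \cite{GutikPavlyk2005} (the semigroup of $I_\lambda\times I_\lambda$-matrix units is closed in any topological inverse semigroup containing it) together with Theorem~6 of \cite{GutikPavlyk2005} (the infinite semigroup of matrix units admits no countably compact inverse semigroup topology) to conclude that $\lambda$ must be finite; it then cites Theorem~1.7(e) of \cite{CarruthHildebrantKoch} and Theorem~3.10.4 of \cite{Engelking1989} to get that $S_{\alpha,\alpha}=(\alpha,1_S,\alpha)B^0_\lambda(S)(\alpha,1_S,\alpha)$ is countably compact. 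Your argument, by contrast, is self-contained: you manufacture a sequence $(\alpha_0,1_S,\alpha_n)$ in $B^0_\lambda(S)$, use the continuity of $x\mapsto xx^{-1}$ and $x\mapsto x^{-1}x$ to pin down the accumulation point, and then reach a contradiction via Proposition~\ref{proposition4-3}$(v)$ and Remark~\ref{remark4-4}. In effect you are reproving, inside this specific setting, the content of the two cited theorems from \cite{GutikPavlyk2005}. The advantage of your approach is that it avoids external references and makes the role of the inverse-semigroup structure explicit; the advantage of the paper's approach is brevity and the modular use of known structural results about matrix-unit semigroups. The treatment of sufficiency and of the uniqueness/base description (via Lemma~\ref{lemma4-5a} and the finite-union argument) is essentially the same in both.
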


\begin{proof}
By Theorem~14
of
\cite{GutikPavlyk2005}, the semigroup of
$I_\lambda\times I_\lambda$-matrix units is a closed subsemigroup
of any topological inverse semigroup $T$ which contains it.
By
Theorem~6
of
\cite{GutikPavlyk2005}, on the infinite semigroup of matrix
units there exists no countably compact inverse semigroup
topology. Therefore $\lambda$ is a finite cardinal, hence by
Theorem~1.7(e) of~\cite[Vol.~1]{CarruthHildebrantKoch} and
Theorem~3.10.4
of
\cite{Engelking1989},
$(\alpha,1_S,\alpha)B^0_{\lambda}(S)(\alpha,1_S,\alpha)=S_{\alpha,\alpha}$
is a countably compact topological semigroup for any $\alpha\in
I_\lambda$, and thus $(S,\tau)$ is a countably compact topological
semigroup. The converse follows from Lemma~\ref{lemma4-1} and the
assertion that the finite union of countably compact spaces is a
countable compact space.

The last assertion of the proposition follows from
Lemma~\ref{lemma4-5a}.
\end{proof}

\begin{theorem}\label{theorem4-6}
Let $\lambda_1$ and $\lambda_2$ be any finite cardinals such that
$\lambda_2\geqslant\lambda_1\geqslant 1$. Let $B_{\lambda_1}^0(S)$
and $B_{\lambda_2}^0(T)$ be topological Brandt $\lambda_1^0$- and
$\lambda_2^0$-extensions of topological monoids $S$ and $T$ with
zero, respectively. Let $h\colon S\rightarrow T$ be a continuous
homomorphism such that $(0_S)h=0_T$ and $\varphi\colon
I_{\lambda_1}\rightarrow I_{\lambda_2}$ an one-to-one map. Let $e$
be a non-zero idempotent of $T$, $H_e$ a maximal subgroup of $T$
with unity $e$ and $u\colon I_{\lambda_1}\rightarrow H_e$ a map.
Then $I_h=\{ s\in S\mid (s)h=0_T\}$ is a closed ideal of $S$ and
the map $\sigma\colon B_{\lambda_1}^0(S)\rightarrow
B_{\lambda_2}^0(T)$ defined by the formulae
\begin{equation*}
    ((\alpha,s,\beta))\sigma=
    \left\{%
\begin{array}{cl}
    ((\alpha)\varphi,(\alpha)u\cdot(s)h\cdot((\beta)u)^{-1},(\beta)\varphi),
    & \hbox{if}\quad s\notin S\setminus I_h ;\\
    0_2, & \hbox{if}\quad s\in I_h^*,\\
\end{array}%
\right.
\end{equation*}
and $(0_1)\sigma=0_2$, is a non-trivial continuous homomorphism
from $B_{\lambda_1}^0(S)$ into $B_{\lambda_2}^0(T)$. Moreover if
for the semigroup $T$ the following conditions hold:
\begin{itemize}
    \item[($i$)] Every idempotent of $T$ lies in the center of
    $T$; and
    \item[($ii$)] $T$ has $\mathcal{B}_{\lambda_1}^*$-property,
\end{itemize}
then every non-trivial continuous homomorphism from
$B_{\lambda_1}^0(S)$ into $B_{\lambda_2}^0(T)$ can be so
constructed.
\end{theorem}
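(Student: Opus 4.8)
The statement is the topological counterpart of Theorem~\ref{theorem2-10}, whose purely algebraic content we may invoke verbatim; so the only genuinely new points to establish are that $I_h$ is \emph{closed}, that the map $\sigma$ is \emph{continuous}, and that in the converse part the homomorphism $h$ furnished by Theorem~\ref{theorem2-10} may be taken continuous. The plan is to work throughout with the explicit neighbourhood base of $B^0_{\lambda}(S)$ supplied by Lemma~\ref{lemma4-5a} (equivalently Proposition~\ref{proposition4-3}), and with the two canonical maps attached to an index $\gamma\in I_\lambda$: the injection $\iota_\gamma\colon S\to B^0_\lambda(S)$ sending $s\neq 0_S$ to $(\gamma,s,\gamma)$ and $0_S$ to the zero, and the bijection $j_\gamma\colon S_{\gamma,\gamma}\to S$ inverse to it. By Proposition~\ref{proposition4-3}$(v)$--$(vi)$ one has $\tau_{B}|_{S_{\gamma,\gamma}}=\tau$ for every $\gamma$, so $j_\gamma$ is a homeomorphism and $\iota_\gamma$ (the composite of $j_\gamma^{-1}$ with the inclusion $S_{\gamma,\gamma}\hookrightarrow B^0_\lambda(S)$) is continuous.

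For the first assertion, Theorem~\ref{theorem2-10} already gives that $I_h$ is an ideal and that $\sigma$ is a well-defined non-trivial homomorphism; since $h$ is continuous and $\{0_T\}$ is closed (the ambient spaces being Hausdorff), $I_h=(0_T)h^{-1}$ is closed. For continuity of $\sigma$ write $f_{\alpha\beta}\colon S\to T$, $(s)f_{\alpha\beta}=(\alpha)u\cdot(s)h\cdot((\beta)u)^{-1}$; each $f_{\alpha\beta}$ is continuous, being $h$ followed by a left and a right translation of the topological semigroup $T$, and $(0_S)f_{\alpha\beta}=0_T$. I would then check continuity of $\sigma$ at the three kinds of points of $B^0_{\lambda_1}(S)$ against the base of Lemma~\ref{lemma4-5a}. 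At $t=(\alpha,s,\beta)$ with $s\notin I_h$, a basic neighbourhood of $(t)\sigma$ is $(W)_{(\alpha)\varphi,(\beta)\varphi}$ with $W\not\ni 0_T$ a neighbourhood of $(s)f_{\alpha\beta}$; since $S\setminus I_h$ is open and $f_{\alpha\beta}$ is continuous, pick a neighbourhood $V\subseteq S\setminus I_h$ of $s$ with $V\not\ni 0_S$ and $(V)f_{\alpha\beta}\subseteq W$, so that $\bigl((V)_{\alpha,\beta}\bigr)\sigma\subseteq(W)_{(\alpha)\varphi,(\beta)\varphi}$. At $t=(\alpha,s,\beta)$ with $s\in I_h^*$ one has $(t)\sigma=0_2$; given a neighbourhood $U\ni 0_T$, continuity of $f_{\alpha\beta}$ yields $V\ni s$, $V\not\ni 0_S$, with $(V)f_{\alpha\beta}\subseteq U$, whence $\bigl((V)_{\alpha,\beta}\bigr)\sigma\subseteq\bigcup_{\mu,\nu\in I_{\lambda_2}}(U)_{\mu,\nu}$. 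At the zero $0_1$, a basic neighbourhood is $\bigcup_{\gamma,\delta\in I_{\lambda_1}}(V)_{\gamma,\delta}$ with $V\ni 0_S$; since $\lambda_1$ is finite there are only finitely many maps $f_{\gamma\delta}$, each continuous and sending $0_S$ to $0_T$, so shrinking $V$ to the corresponding finite intersection we may assume $(V)f_{\gamma\delta}\subseteq U$ for all $\gamma,\delta\in I_{\lambda_1}$, and then $\sigma$ carries this neighbourhood into $\bigcup_{\mu,\nu\in I_{\lambda_2}}(U)_{\mu,\nu}$. Hence $\sigma$ is continuous.

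For the converse, let $\sigma\colon B^0_{\lambda_1}(S)\to B^0_{\lambda_2}(T)$ be a non-trivial continuous homomorphism with $T$ satisfying $(i)$ and $(ii)$. By the second assertion of Theorem~\ref{theorem2-10} there are a homomorphism $h\colon S\to T$ with $(0_S)h=0_T$, a one-to-one map $\varphi\colon I_{\lambda_1}\to I_{\lambda_2}$, an idempotent $e\in(E(T))^*$ and a map $u\colon I_{\lambda_1}\to H_e$ for which $\sigma$ is given by the formula of that theorem; moreover $(0_1)\sigma=0_2$, and, fixing $\alpha\in I_{\lambda_1}$ and putting $\alpha'=(\alpha)\varphi$, the proof of Theorem~\ref{theorem2-10} shows $((\alpha,s,\alpha))\sigma=(\alpha',(s)h,\alpha')$ for $s\notin I_h$ and $((\alpha,s,\alpha))\sigma=0_2$ for $s\in I_h$. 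Consequently $h=j_{\alpha'}\circ\sigma\circ\iota_\alpha$: indeed $\sigma\circ\iota_\alpha$ takes its values in $T_{\alpha',\alpha'}$ (for $s\in I_h$ or $s=0_S$ the value is $0_2\in T_{\alpha',\alpha'}$, otherwise it is the non-zero element $(\alpha',(s)h,\alpha')$), and $j_{\alpha'}$ returns the middle coordinate. Since $\iota_\alpha$, $\sigma$ and $j_{\alpha'}$ are continuous, $h$ is continuous, and applying the first part of the theorem to this $h$, $u$ and $\varphi$ shows that $\sigma$ has been obtained in the stated way.

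The main obstacle is the continuity verification at the zero of $B^0_{\lambda_1}(S)$: it relies on the description of a base at the zero given by Lemma~\ref{lemma4-5a}$(ii)$, which holds precisely because $\lambda_1$ is finite (Remark~\ref{remark4-4} records that the corresponding statements of Proposition~\ref{proposition4-3} fail for infinite $\lambda$), and it uses finiteness of $\lambda_1$ once more in order to replace the neighbourhoods of $0_S$ coming from the finitely many maps $f_{\gamma\delta}$ by their intersection. Everything else --- that $I_h$ is an ideal, that $\varphi$ is injective, the exact shape of the formula --- is inherited unchanged from Theorem~\ref{theorem2-10}.
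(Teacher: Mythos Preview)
Your proof is correct and follows essentially the same approach as the paper's: the algebraic content is delegated to Theorem~\ref{theorem2-10}, closedness of $I_h$ comes from $I_h=(0_T)h^{-1}$, continuity of $\sigma$ is checked at the three types of points against the base of Lemma~\ref{lemma4-5a} (using finiteness of $\lambda_1$ at the zero exactly as the paper does), and for the converse one recovers $h$ from $\sigma$ by restricting to a diagonal block. Your converse is in fact more carefully spelled out than the paper's single sentence---you make explicit the factorisation $h=j_{\alpha'}\circ\sigma\circ\iota_\alpha$ and verify it pointwise, whereas the paper simply asserts that continuity of $h$ follows from continuity of translations and restriction---but the underlying idea is identical.
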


\begin{proof}
The algebraic part of the theorem follows from
Theorem~\ref{theorem3-1}.

Since the homomorphism $h$ is continuous, $I_h=(0_T)h^{-1}$ is a
closed ideal of the topological semigroup $S$.

Further we shall show that the homomorphism $\sigma$ is continuous
whenever is also $h$. We consider the following cases:
\begin{itemize}
    \item[$(i)$] $(0_1)\sigma=0_2$;
    \item[$(ii)$] $((\alpha, s,\beta))\sigma=0_2$, i.~e. $s\in
    I_h$; and
    \item[$(iii)$] $((\alpha, s,\beta))\sigma=
    ((\alpha)\varphi,(\alpha)u\cdot(s)h\cdot((\beta)u)^{-1},(\beta)\varphi)$,
\end{itemize}
where
$(\alpha, s,\beta)$ is any non-zero element of the semigroup
$B_{\lambda_1}^0(S)$.

Without loss of generality we may
assume that $\varphi\colon
I_{\lambda_1}\rightarrow I_{\lambda_2}$ is a bijection. Moreover,
for the simplification of the proof we can assume that
$(\alpha)\varphi=\alpha$ for all $\alpha\in I_{\lambda_1}$.

Consider case $(i)$. Let $U(0_2)=\bigcup_{\alpha,\beta\in
I_{\lambda_2}} (U(0_T))_{\alpha,\beta}$ be any open basic
neighbourhood of the zero $0_2$ in $B_{\lambda_2}^0(T)$. Since
left and right translations in $T$ and the homomorphism $h\colon
S\rightarrow T$ are continuous maps, there exists
for any
$(\alpha)u,((\beta)u)^{-1}\in B_{\lambda_1}^0(S)$,  an
open neighbourhood $V^{\alpha,\beta}(0_S)$ in $S$ such that
$(\alpha)u\cdot (V^{\alpha,\beta}(0_S))h\cdot
((\beta)u)^{-1}\subseteq U(0_T)$. We put
$V(0_S)=\bigcap_{\alpha,\beta\in
I_{\lambda_1}}V^{\alpha,\beta}(0_S)$ and
$V(0_1)=\bigcup_{\alpha,\beta\in I_{\lambda_1}}
(V(0_S))_{\alpha,\beta}$. Then $(V(0_1))\sigma\subseteq U(0_2)$.

In case $(ii)$ we have that $(s)h=0_T$. Let
$U(0_2)=\bigcup_{\alpha,\beta\in I_{\lambda_2}}
(U(0_T))_{\alpha,\beta}$ be any basic open neighbourhood of the
zero $0_2$ in $B_{\lambda_2}^0(T)$. Since left and right
translations in $T$ and the homomorphism $h\colon S\rightarrow T$
are continuous maps, for the open neighbourhood $U(0_T)$ of the
zero $0_T$ there exists an open neighbourhood $V(s)$ of $s$ in $S$
such that $(\alpha)u\cdot (V(s))h\cdot ((\beta)u)^{-1}\subseteq
U(0_T)$. Therefore we have that
$((V(s))_{\alpha,\beta})\sigma\subseteq U(0_2)$.

Next we consider case $(iii)$. Let $U_{\alpha,\beta}$ be any basic
open neighbourhood of the element $((\alpha, s,\beta))\sigma=
(\alpha,(\alpha)u\cdot(s)h\cdot((\beta)u)^{-1},\beta)$ in
$B_{\lambda_2}^0(T)$. Since left and right translations in the
semigroup $T$ and the homomorphism $h\colon S\rightarrow T$ are
continuous maps, there exists an open neighbourhood $V(s)$ of the
point $s$ in $S$ such that $(\alpha)u\cdot (V(s))h\cdot
((\beta)u)^{-1}\subseteq U$ and hence we get
$((V(s))_{\alpha,\beta})\sigma\subseteq U_{\alpha,\beta}$.

Since left and right translations in the topological semigroup
$B_{\lambda_2}^0(T)$ are continuous and any restriction of a
continuous map is a continuous map, the continuity of the
homomorphism $\sigma\colon B_{\lambda_1}^0(S)\rightarrow
B_{\lambda_2}^0(T)$ implies the continuity of $h$.
\end{proof}

Note
that the statements of Theorem~\ref{theorem4-6}
are false
for the
topological Brandt $\lambda^0$-extensions when $\lambda$
is
an
infinite cardinal. This follows from the next example:

\begin{example}\label{example4-9}
Let $\lambda$ be an infinite cardinal. On $B_\lambda$ we define a
topology $\tau_{mi}$ as follows: all non-zero elements of
$B_\lambda$ are isolated points and the family
\begin{equation*}
    \mathscr{B}(0)=\{ V_{\alpha_1}\cap\cdots\cap V_{\alpha_i}\cap
    H_{\beta_1}\cap\cdots\cap H_{\beta_j}\mid
    \alpha_1,\ldots,\alpha_i,\beta_1,\ldots,\beta_j\in I_\lambda,
    i,j\in\mathbb{N}\},
\end{equation*}
where $V_\gamma=B_\lambda\setminus\{(\gamma,\delta)\mid\delta\in
I_\lambda\}$ and
$H_\nu=B_\lambda\setminus\{(\delta,\nu)\mid\delta\in I_\lambda\}$,
$\gamma,\nu\in I_\lambda$, determined a base of the topology
$\tau_{mi}$ at zero of $B_\lambda$~(cf.  \cite{GutikPavlyk2005}).
By Proposition~25
of
\cite{GutikPavlyk2005}, $(B_\lambda,\tau_{mi})$
is a topological inverse semigroup.

Let $\mathfrak{d}$ be the discrete topology on $B_\lambda$. Then
the identity map $\sigma\colon
(B_\lambda,\tau_{mi})\rightarrow(B_\lambda,\mathfrak{d})$ is not
continuous, but the maps $h$, $\varphi$ and $u$ are as
requested 
in the statements of Theorem~\ref{theorem4-6}.
\end{example}

Similarly to Section~\ref{sec3},
we define new categories of
topological semigroups and pairs of finite sets and topological
semigroups.

Let $S$ and $T$ be topological monoids with zeros. Let be
$\texttt{CHom}\,_0(S,T)$ be the
set of all continuous homomorphisms
$\sigma\colon S\rightarrow T$ such that $(0_S)\sigma=0_T$. We put
\begin{equation*}
    \mathbf{E}^{\textit{top}}_1(S,T)=\{e\in E(T)\mid \hbox{there exists~}
    \sigma\in
    \texttt{CHom}\,_0(S,T) \hbox{~such that~} (1_S)\sigma=e\}
\end{equation*}
and define the family
\begin{equation*}
    \mathscr{H}^{\textit{top}}_1(S,T)=\{ H(e)\mid
    e\in\mathbf{E}^{\textit{top}}_1(S,T)\},
\end{equation*}
where by $H(e)$ we denote the maximal subgroup with the unity $e$
in the semigroup $T$. Also,
by $\mathfrak{TB}$ we denote the class
of all topological monoids $S$ with zero such that $S$ has
$\mathcal{B}^*$-property and every idempotent of $S$ lies in the
center of $S$.

We define the
category $\mathscr{T\!B}_{\operatorname{fin}}$ as
follows:
\begin{itemize}
    \item[$(i)$] $\operatorname{\textbf{Ob}}(\mathscr{T\!B}_{\operatorname{fin}})=
    \{(S,I)\mid S\in\mathfrak{TB} \textrm{~and~} I \textrm{~is a
    finite set}\}$, and if $S$ is a trivial
    semigroup then we identify $(S,I)$ and $(S,J)$ for all finite
    sets $I$ and $J$;
    \item[$(ii)$] $\operatorname{\textbf{Mor}}(\mathscr{T\!B}_{\operatorname{fin}})$
    consists of triples $(h,u,\varphi)\colon
    (S,I)\rightarrow(S^{\,\prime},I^{\,\prime})$, where
\begin{equation}\label{eq4-1}
\begin{split}
    & h\colon S\rightarrow S^{\,\prime} \textrm{~is a continuous
      homomorphism such that~}
      h\in \texttt{CHom}\,_0(S,S^{\,\prime}),\\
    & u\colon I\rightarrow H(e) \textrm{~is a map~}, \textrm{~for~} H(e)\in
      \mathscr{H}^{\textit{top}}_1(S,S^{\,\prime}),\\
    & \varphi\colon I\rightarrow I^{\,\prime} \textrm{~is an
    one-to-one map},
\end{split}
\end{equation}
with the composition defined by formulae (\ref{eq3-2}) and
(\ref{eq3-3}).
\end{itemize}
Straightforward verification shows that
$\mathscr{T\!B}_{\operatorname{fin}}$ is the category with the
identity morphism
$\varepsilon_{(S,I)}=(\operatorname{Id}_S,u_0,\operatorname{Id}_I)$
for any
$(S,I)\in\operatorname{\textbf{Ob}}(\mathscr{T\!B}_{\operatorname{fin}})$,
where $\operatorname{Id}_S\colon S\rightarrow S$ and
$\operatorname{Id}_I\colon I\rightarrow I$ are identity maps and
$(\alpha)u_0=1_S$ for all $\alpha\in I$.

We define a category
$\mathscr{B}^*_{\operatorname{fin}}(\mathscr{T\!S})$ as follows:
\begin{itemize}
    \item[$(i)$]
    Let
    $\operatorname{\textbf{Ob}}(\mathscr{B}^*_{\operatorname{fin}}(\mathscr{T\!S}))$
    be all finite topological Brandt $\lambda^0$-extensions of
    topological monoids $S$ with zeros
    such that $S$ has $\mathcal{B}^*$-property and every
    idempotent of $S$ lies in the center of $S$;
    \item[$(ii)$]
    Let
    $\operatorname{\textbf{Mor}}(\mathscr{B}^*_{\operatorname{fin}}(\mathscr{T\!S}))$
    be homomorphisms of finite topological Brandt $\lambda^0$-extensions
    of topological monoids $S$ with zeros such that $S$ has
    $\mathcal{B}^*$-property and every idempotent of $S$ lies in
    the center of $S$.
\end{itemize}

We define
a functor $\operatorname{\textbf{B}}$ from the category
$\mathscr{T\!B}_{\operatorname{fin}}$ into the category
$\mathscr{B}^*_{\operatorname{fin}}(\mathscr{T\!S})$ 
similarly as in
Section~\ref{sec3} (cf.  \ref{functor_B}).
Theorems~\ref{theorem3-1} and \ref{theorem4-6} imply:

\begin{theorem}\label{theorem4-15}
$\operatorname{\textbf{B}}$ is a full representative functor from
$\mathscr{T\!B}_{\operatorname{fin}}$ into
$\mathscr{B}^*_{\operatorname{fin}}(\mathscr{T\!S})$.
\end{theorem}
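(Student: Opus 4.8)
The plan is to reduce Theorem~\ref{theorem4-15} to the already-established results by checking, one layer at a time, that the functor $\textbf{B}$ behaves correctly on the objects and morphisms of the new categories. First I would verify that $\textbf{B}$ sends $\operatorname{\textbf{Ob}}(\mathscr{T\!B}_{\operatorname{fin}})$ into $\operatorname{\textbf{Ob}}(\mathscr{B}^*_{\operatorname{fin}}(\mathscr{T\!S}))$: given $(S,I_\lambda)$ with $S\in\mathfrak{TB}$ and $\lambda$ finite, Lemma~\ref{lemma4-5a} guarantees that there is a unique topological Brandt $\lambda^0$-extension $\left(B^0_\lambda(S),\tau_B\right)$, and since $S$ has $\mathcal{B}^*$-property and all idempotents central, $\textbf{B}(S,I_\lambda)=\left(B^0_\lambda(S),\tau_B\right)$ is by definition an object of $\mathscr{B}^*_{\operatorname{fin}}(\mathscr{T\!S})$. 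The finiteness of $\lambda$ is exactly what makes the extension unique and what is required on the target side.

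Next I would verify functoriality on morphisms. For a $\mathscr{T\!B}_{\operatorname{fin}}$-morphism $(h,u,\varphi)\colon(S,I_{\lambda_1})\to(T,I_{\lambda_2})$ with $h$ non-trivial, the map $\textbf{B}(h,u,\varphi)$ defined by (\ref{functor_B}) is a homomorphism of the Brandt extensions by the algebraic part of Theorem~\ref{theorem2-10} (equivalently, by Theorem~\ref{theorem3-1}), and it is \emph{continuous} precisely because $h$ is continuous: this is the content of the first half of Theorem~\ref{theorem4-6}, whose continuity computation (cases $(i)$–$(iii)$ there) applies verbatim in the finite-$\lambda$ setting. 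For trivial $h$ the map $\textbf{B}(h,u,\varphi)$ is the annihilating map, which is trivially continuous. Preservation of identities and of composition of morphisms is identical to the corresponding verification inside the proof of Theorem~\ref{theorem3-1}, since the composition formulae (\ref{eq3-2})–(\ref{eq3-3}) and the bookkeeping with the maps $[u,\varphi,h',u']$ are purely algebraic and the continuity added by Theorem~\ref{theorem4-6} is stable under composition; thus $\textbf{B}$ is a functor from $\mathscr{T\!B}_{\operatorname{fin}}$ into $\mathscr{B}^*_{\operatorname{fin}}(\mathscr{T\!S})$.

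It remains to show $\textbf{B}$ is full and representative. For representativeness: every object of $\mathscr{B}^*_{\operatorname{fin}}(\mathscr{T\!S})$ is, by definition, a finite topological Brandt $\lambda^0$-extension of some topological monoid $S$ with zero having $\mathcal{B}^*$-property and central idempotents, i.e. $S\in\mathfrak{TB}$; by Lemma~\ref{lemma4-5a} this extension is the unique one and hence equals $\textbf{B}(S,I_\lambda)$, so it lies in the image of $\textbf{B}$ up to (in fact equal to) isomorphism. For fullness: given a continuous homomorphism $\alpha\colon\textbf{B}(S,I_{\lambda_1})\to\textbf{B}(T,I_{\lambda_2})$, the second (``moreover'') assertion of Theorem~\ref{theorem4-6} says that $\alpha$ arises from data $h\colon S\to T$ (necessarily continuous, again by the last paragraph of the proof of Theorem~\ref{theorem4-6}), a one-to-one map $\varphi\colon I_{\lambda_1}\to I_{\lambda_2}$, an idempotent $e\in E(T)$ and a map $u\colon I_{\lambda_1}\to H_e$; the triple $(h,u,\varphi)$ is then a $\mathscr{T\!B}_{\operatorname{fin}}$-morphism with $\textbf{B}(h,u,\varphi)=\alpha$. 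The trivial case is handled by the fact that an annihilating continuous homomorphism equals $\textbf{B}$ of a triple with trivial $h$. The main obstacle is none of these reductions individually but rather making sure the hypotheses of Theorem~\ref{theorem4-6} are genuinely in force — specifically that membership in $\mathfrak{TB}$ supplies both conditions $(i)$ (central idempotents) and $(ii)$ ($\mathcal{B}^*_{\lambda_1}$-property, which follows from $\mathcal{B}^*$-property for $\lambda_1\geqslant 2$ and is vacuous for $\lambda_1=1$) needed for the ``can be so constructed'' clause; once that is checked, the rest is routine transcription of the proof of Theorem~\ref{theorem3-1} with the word ``continuous'' inserted throughout.
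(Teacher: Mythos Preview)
Your proposal is correct and follows the paper's own approach: the paper's proof of Theorem~\ref{theorem4-15} is simply the one-line remark that it is implied by Theorems~\ref{theorem3-1} and~\ref{theorem4-6}, and what you have written is a careful unpacking of exactly that implication (with Lemma~\ref{lemma4-5a} made explicit for the object-level well-definedness). Your discussion of the hypotheses needed for the ``moreover'' clause of Theorem~\ref{theorem4-6} is a useful sanity check but does not depart from the paper's route.
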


We define the first series of categories as follows:
\begin{itemize}
    \item[$(i)$] $\operatorname{\textbf{Ob}}(\mathscr{T\!B}_{\operatorname{fin}}\mathscr{I})=
    \{(S,I)\mid S\in\mathfrak{B} \textrm{~is a topological inverse monoid~and~} I \textrm{~is a
    non-empty finite} \\ \textrm{set}\}$, and if $S$ is a trivial semigroup then we
    identify $(S,I)$ and $(S,J)$ for all non-empty sets $I$ and
    $J$;%%
    \item[] $\operatorname{\textbf{Ob}}(\mathscr{CC\!T\!B}_{\operatorname{fin}}\mathscr{I})=
    \{(S,I)\mid S\in\mathfrak{B} \textrm{~is a countably compact topological inverse
    monoid~and~}\\
    I \textrm{~is a non-empty finite set}\}$, and if $S$ is a trivial semigroup then we
    identify $(S,I)$ and $(S,J)$ for all non-empty sets $I$ and
    $J$;%%
    \item[] $\operatorname{\textbf{Ob}}(\mathscr{C\!T\!B}_{\operatorname{fin}}\mathscr{I})=
    \{(S,I)\mid S\in\mathfrak{B} \textrm{~is a compact topological inverse
    monoid~and~} I \textrm{~is a non-} \\ \textrm{empty finite set}\}$, and if
    $S$ is a trivial semigroup then we
    identify $(S,I)$ and $(S,J)$ for all non-empty sets $I$ and
    $J$;%%

    \item[]
    $\operatorname{\textbf{Ob}}(\mathscr{\mathscr{T\!B}_{\operatorname{fin}}\mathscr{IC}})=
    \{(S,I)\mid S\textrm{~is a topological inverse Clifford monoid~and~} I \textrm{~is a
    non-empty} \\ \textrm{finite set}\}$, and if $S$ is a trivial semigroup then we
    identify $(S,I)$ and $(S,J)$ for all non-empty sets $I$ and
    $J$;%%
    \item[]
    $\operatorname{\textbf{Ob}}(\mathscr{\mathscr{CC\!T\!B}_{\operatorname{fin}}\mathscr{IC}})=
    \{(S,I)\mid S\textrm{~is a countably compact topological inverse Clifford monoid}
    \\ \textrm{and~} I \textrm{~is a
    non-empty finite set}\}$, and if $S$ is a trivial semigroup then we
    identify $(S,I)$ and $(S,J)$ for all non-empty sets $I$ and
    $J$;%%
    \item[]
    $\operatorname{\textbf{Ob}}(\mathscr{\mathscr{C\!T\!B}_{\operatorname{fin}}\mathscr{IC}})=
    \{(S,I)\mid S\textrm{~is a compact topological inverse Clifford monoid~and~} I \textrm{~is a }
     \\ \textrm{non-empty finite set}\}$, and if $S$ is a trivial semigroup then we
    identify $(S,I)$ and $(S,J)$ for all non-empty sets $I$ and
    $J$;%%

    \item[]
    $\operatorname{\textbf{Ob}}(\mathscr{\mathscr{T\!B}_{\operatorname{fin}}\mathscr{SL}})=
    \{(S,I)\mid S\textrm{~is a topological semilattice with unity and zero~and~} I$ ~is a
    non-empty finite set$\}$, and if $S$ is a trivial semigroup then we
    identify $(S,I)$ and $(S,J)$ for all non-empty sets $I$ and
    $J$;%%
    \item[]
    $\operatorname{\textbf{Ob}}(\mathscr{\mathscr{CC\!T\!B}_{\operatorname{fin}}\mathscr{SL}})=
    \{(S,I)\mid S\textrm{~is a countably compact topological semilattice with unity} \\
    \textrm{and zero~and~} I$ ~is a
    non-empty finite set$\}$, and if $S$ is a trivial semigroup then we
    identify $(S,I)$ and $(S,J)$ for all non-empty sets $I$ and
    $J$;%%
    \item[]
    $\operatorname{\textbf{Ob}}(\mathscr{\mathscr{C\!T\!B}_{\operatorname{fin}}\mathscr{SL}})=
    \{(S,I)\mid S\textrm{~is a compact topological semilattice with unity and zero and~}$
    $I$ ~is a non-empty finite set$\}$, and if $S$ is a trivial semigroup then we
    identify $(S,I)$ and $(S,J)$ for all non-empty sets $I$ and
    $J$;%%

    \item[]
    $\operatorname{\textbf{Ob}}(\mathscr{T\!B}_{\operatorname{fin}}\mathscr{S}_2)=
    \{(S,I)\mid S$~is a topological monoid with two idempotent
    zero and unity and $I$ is a non-empty finite set$\}$;%%

    \item[]
    $\operatorname{\textbf{Ob}}(\mathscr{T\!B}_{\operatorname{fin}}\mathscr{G})=
    \{(S,I)\mid S$~is a topological group and $I$ is a non-empty finite set$\}$;%%
    \item[]
    $\operatorname{\textbf{Ob}}(\mathscr{CC\!T\!B}_{\operatorname{fin}}\mathscr{G})=
    \{(S,I)\mid S$~is a countably compact topological group and $I$ is
    a non-empty finite set$\}$;%%
    \item[]
    $\operatorname{\textbf{Ob}}(\mathscr{C\!T\!B}_{\operatorname{fin}}\mathscr{G})=
    \{(S,I)\mid S$~is a compact topological group and $I$ is
    a non-empty finite set$\}$;%%

    \item[$(ii)$]
    $\operatorname{\textbf{Mor}}(\mathscr{T\!B}_{\operatorname{fin}}\mathscr{I})$,
    $\operatorname{\textbf{Mor}}(\mathscr{CC\!T\!B}_{\operatorname{fin}}\mathscr{I})$,
    $\operatorname{\textbf{Mor}}(\mathscr{C\!T\!B}_{\operatorname{fin}}\mathscr{I})$,
    $\operatorname{\textbf{Mor}}(\mathscr{\mathscr{T\!B}_{\operatorname{fin}}\mathscr{IC}})$,
    $\operatorname{\textbf{Mor}}(\mathscr{\mathscr{CC\!T\!B}_{\operatorname{fin}}\mathscr{IC}})$,
    $\operatorname{\textbf{Mor}}(\mathscr{\mathscr{C\!T\!B}_{\operatorname{fin}}\mathscr{IC}})$,
    $\operatorname{\textbf{Mor}}(\mathscr{\mathscr{T\!B}_{\operatorname{fin}}\mathscr{SL}})$,
    $\operatorname{\textbf{Mor}}(\mathscr{\mathscr{CC\!T\!B}_{\operatorname{fin}}\mathscr{SL}})$,
    $\operatorname{\textbf{Mor}}(\mathscr{\mathscr{C\!T\!B}_{\operatorname{fin}}\mathscr{SL}})$,
    \break $\operatorname{\textbf{Mor}}(\mathscr{T\!B}_{\operatorname{fin}}\mathscr{S}_2)$,
    $\operatorname{\textbf{Mor}}(\mathscr{T\!B}_{\operatorname{fin}}\mathscr{G})$,
    $\operatorname{\textbf{Mor}}(\mathscr{CC\!T\!B}_{\operatorname{fin}}\mathscr{G})$, and
    $\operatorname{\textbf{Mor}}(\mathscr{C\!T\!B}_{\operatorname{fin}}\mathscr{G})$
    consist of corresponding triples $(h,u,\varphi)\colon
    (S,I)\rightarrow(S^{\,\prime},I^{\,\prime})$, which satisfy
    condition (\ref{eq4-1}).
\end{itemize}
Obviously, these categories are subcategories of the
category $\mathscr{T\!B}_{\operatorname{fin}}$.
The second series of categories is defined as follows:
\begin{itemize}
    \item[$(i)$]
    Let
    $\operatorname{\textbf{Ob}}(\mathscr{B}^*_{\operatorname{fin}}(\mathscr{T\!I\!S}))$
    be all finite topological Brandt $\lambda^0$-extensions of
    topological inverse monoids $S$ with zeros in the class of
    topological inverse semigroups such that $S$ has
    $\mathcal{B}^*$-property and every
    idempotent of $S$ lies in the center of $S$; %%%
    \item[]
    Let
    $\operatorname{\textbf{Ob}}(\mathscr{B}^*(\mathscr{CC\!T\!I\!S}))$
    be all countably compact topological Brandt $\lambda^0$-extensions of
    topological inverse monoids $S$ with zeros in the class of
    topological inverse semigroups such that $S$ has
    $\mathcal{B}^*$-property and every
    idempotent of $S$ lies in the center of $S$; %%%
    \item[]
    Let
    $\operatorname{\textbf{Ob}}(\mathscr{B}^*(\mathscr{C\!T\!I\!S}))$
    be all compact topological Brandt $\lambda^0$-extensions of
    topological inverse monoids $S$ with zeros such that $S$ has
    $\mathcal{B}^*$-property and every
    idempotent of $S$ lies in the center of $S$; %%%

    \item[]
    Let $\operatorname{\textbf{Ob}}(\mathscr{B}^*_{\operatorname{fin}}(\mathscr{T\!ICS}))$
    be all finite topological Brandt $\lambda^0$-extensions of
    topological inverse Clifford monoids with zeros in the class
    of topological inverse semigroups; %%%
    \item[]
    Let
    $\operatorname{\textbf{Ob}}(\mathscr{B}^*(\mathscr{CCT\!ICS}))$
    be all countably compact topological Brandt $\lambda^0$-extensions of
    topological inverse Clifford monoids with zeros in the class
    of topological inverse semigroups; %%%
    \item[]
    Let
    $\operatorname{\textbf{Ob}}(\mathscr{B}^*(\mathscr{CT\!ICS}))$
    be all compact topological Brandt $\lambda^0$-extensions of
    topological inverse Clifford monoids with zeros; %%%

    \item[]
    Let
    $\operatorname{\textbf{Ob}}(\mathscr{B}^*(\mathscr{T\!SL}))$
    be all finite topological Brandt $\lambda^0$-extensions of topological
    semilattices with zeros and identities; %%%
    \item[]
    Let
    $\operatorname{\textbf{Ob}}(\mathscr{B}^*(\mathscr{CC\!T\!SL}))$
    be all countably compact topological Brandt $\lambda^0$-extensions of
    topological semilattices with zeros and identities in the class
    of topological inverse semigroups; %%%
    \item[]
    Let
    $\operatorname{\textbf{Ob}}(\mathscr{B}^*(\mathscr{C\!T\!SL}))$
    be all compact topological Brandt $\lambda^0$-extensions of
    topological semilattices with zeros and identities; %%%

    \item[]
    Let
    $\operatorname{\textbf{Ob}}(\mathscr{B}^*_{\operatorname{fin}}(\mathscr{T\!S}_2))$
    be all finite topological Brandt $\lambda^0$-extensions of topological
    monoids with two idempotents zeros and identities; %%%%

    \item[]
    Let
    $\operatorname{\textbf{Ob}}(\mathscr{B}^*_{\operatorname{fin}}(\mathscr{T\!G}))$
    be all topological inverse Brandt semigroups with finite bands;   %%
    \item[]
    Let
    $\operatorname{\textbf{Ob}}(\mathscr{B}^*(\mathscr{CC\!T\!G}))$
    be all $0$-simple countably compact topological inverse semigroups;   %%
    \item[]
    Let
    $\operatorname{\textbf{Ob}}(\mathscr{B}^*(\mathscr{C\!T\!G}))$
    be all $0$-simple compact topological inverse semigroups;  %%

    \item[$(ii)$]
    Let
    $\operatorname{\textbf{Mor}}(\mathscr{B}^*_{\operatorname{fin}}(\mathscr{T\!I\!S}))$,
    $\operatorname{\textbf{Mor}}(\mathscr{B}^*(\mathscr{CC\!T\!I\!S}))$,
    $\operatorname{\textbf{Mor}}(\mathscr{B}^*(\mathscr{C\!T\!I\!S}))$,
    $\operatorname{\textbf{Mor}}(\mathscr{B}^*_{\operatorname{fin}}(\mathscr{T\!ICS}))$,
    \break
    $\operatorname{\textbf{Mor}}(\mathscr{B}^*(\mathscr{CCT\!ICS}))$,
    $\operatorname{\textbf{Mor}}(\mathscr{B}^*(\mathscr{CT\!ICS}))$,
    $\operatorname{\textbf{Mor}}(\mathscr{B}^*(\mathscr{T\!SL}))$,
    $\operatorname{\textbf{Mor}}(\mathscr{B}^*(\mathscr{CC\!T\!SL}))$,
    $\operatorname{\textbf{Mor}}(\mathscr{B}^*(\mathscr{C\!T\!SL}))$, and
    $\operatorname{\textbf{Mor}}(\mathscr{B}^*_{\operatorname{fin}}(\mathscr{T\!S}_2))$
    be continuous homomorphisms of corresponding topological Brandt
    $\lambda^0$-extensions of corresponding topological semigroups
    and let
    $\operatorname{\textbf{Mor}}(\mathscr{B}^*_{\operatorname{fin}}(\mathscr{T\!G}))$,
    $\operatorname{\textbf{Mor}}(\mathscr{B}^*(\mathscr{CC\!T\!G}))$,
    and $\operatorname{\textbf{Mor}}(\mathscr{B}^*(\mathscr{C\!T\!G}))$
    be non-trivial continuous
    homomorphisms of the
    corresponding topological inverse Brandt semigroups.
\end{itemize}

Theorem~\ref{theorem4-15} implies:

\begin{proposition}\label{proposition4-16}
$\textbf{B}$ is a full representative functor from
$\mathscr{T\!B}_{\operatorname{fin}}\mathscr{I}$ $[$ resp.,
$\mathscr{CC\!T\!B}_{\operatorname{fin}}\mathscr{I}$,
$\mathscr{C\!T\!B}_{\operatorname{fin}}\mathscr{I}$,
$\mathscr{\mathscr{T\!B}_{\operatorname{fin}}\mathscr{IC}}$,
$\mathscr{\mathscr{CC\!T\!B}_{\operatorname{fin}}\mathscr{IC}}$,
$\mathscr{\mathscr{C\!T\!B}_{\operatorname{fin}}\mathscr{IC}}$,
$\mathscr{\mathscr{T\!B}_{\operatorname{fin}}\mathscr{SL}}$,
$\mathscr{\mathscr{CC\!T\!B}_{\operatorname{fin}}\mathscr{SL}}$,
$\mathscr{\mathscr{C\!T\!B}_{\operatorname{fin}}\mathscr{SL}}$,
$\mathscr{T\!B}_{\operatorname{fin}}\mathscr{S}_2$, and
$\mathscr{T\!B}_{\operatorname{fin}}\mathscr{G}$ $]$ into
$\mathscr{B}^*_{\operatorname{fin}}(\mathscr{T\!I\!S})$ $[$ resp.,
$\mathscr{B}^*(\mathscr{CC\!T\!I\!S})$,
$\mathscr{B}^*(\mathscr{C\!T\!I\!S})$,
$\mathscr{B}^*_{\operatorname{fin}}(\mathscr{T\!ICS})$,
$\mathscr{B}^*(\mathscr{CCT\!ICS})$,
$\mathscr{B}^*(\mathscr{CT\!ICS})$,
$\mathscr{B}^*(\mathscr{T\!SL})$,
$\mathscr{B}^*(\mathscr{CC\!T\!SL})$,
$\mathscr{B}^*(\mathscr{C\!T\!SL})$,
$\mathscr{B}^*_{\operatorname{fin}}(\mathscr{T\!S}_2)$, and
$\mathscr{B}^*_{\operatorname{fin}}(\mathscr{T\!G})$ $]$.
\end{proposition}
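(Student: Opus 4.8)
The plan is to deduce every assertion of the proposition from Theorem~\ref{theorem4-15} by restricting the functor $\textbf{B}$ to suitable full subcategories. First I would note that each of the source categories listed ($\mathscr{T\!B}_{\operatorname{fin}}\mathscr{I}$, $\mathscr{T\!B}_{\operatorname{fin}}\mathscr{S}_2$, $\mathscr{T\!B}_{\operatorname{fin}}\mathscr{G}$, the Clifford and semilattice variants, and their compact and countably compact refinements) is a \emph{full} subcategory of $\mathscr{T\!B}_{\operatorname{fin}}$: by construction its morphisms are exactly the triples $(h,u,\varphi)$ subject to~(\ref{eq4-1}), i.e. the restriction of $\operatorname{\textbf{Mor}}(\mathscr{T\!B}_{\operatorname{fin}})$, so the only real content is that the object restrictions land inside $\mathfrak{TB}$. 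This is automatic in every case except the plain inverse one: a topological inverse Clifford monoid has all idempotents central and, having no noncommuting idempotents, cannot contain the (inverse, non-Clifford) semigroup of $2\times 2$-matrix units; a semilattice with unity and zero is commutative; a monoid whose only idempotents are $0$ and $1$ (in particular the adjoined-zero extension $G^0$ of a topological group $G$, which is how $\textbf{B}$ acts on the group-valued categories) has too few idempotents to contain $B_2$; and in the $\mathscr{I}$-cases membership in $\mathfrak{TB}$ is imposed by fiat. Hence $\textbf{B}$ restricts to a functor on each of these subcategories, automatically preserving identities and the compositions~(\ref{eq3-2})--(\ref{eq3-3}).

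Next I would check that on objects this restricted functor has image inside the claimed target category. Algebraically, $B^0_\lambda(S)$ is inverse, completely $0$-simple, a Brandt semigroup, etc., exactly when $S$ is, by the results of Gutik and Pavlyk recalled at the beginning of Section~2 together with Corollary~\ref{corollary1-8}, while a semilattice, a two-idempotent monoid, or a group appears unchanged as the defining ingredient; topologically, Lemma~\ref{lemma4-5a} says that for finite $\lambda$ the extension carries a unique (inverse) semigroup topology --- namely the one $\textbf{B}$ produces --- and that it is a topological inverse semigroup precisely when $S$ is. For the compact and countably compact refinements I would invoke Propositions~\ref{proposition4-5b} and~\ref{proposition4-5c}: a finite topological Brandt $\lambda^0$-extension is compact (resp. countably compact, in the class of topological inverse semigroups) if and only if $\lambda$ is finite and $S$ is compact (resp. countably compact). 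This gives both directions needed: $\textbf{B}$ of a compact/countably compact ingredient is a compact/countably compact extension, and conversely every compact/countably compact object of the target side automatically has finite $\lambda$ and a compact/countably compact local submonoid $S_{\alpha,\alpha}$, so it is $\textbf{B}$ applied to an object of the corresponding ``fin'' source category. In particular the functor is representative in each case.

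Fullness is then formal. Given a morphism $\alpha\colon\textbf{B}(S,I)\to\textbf{B}(T,J)$ of one of the target subcategories, it is in particular a morphism of $\mathscr{B}^*_{\operatorname{fin}}(\mathscr{T\!S})$, so by Theorem~\ref{theorem4-15} there is a triple $(h,u,\varphi)\in\operatorname{\textbf{Mor}}(\mathscr{T\!B}_{\operatorname{fin}})$ with $\textbf{B}(h,u,\varphi)=\alpha$; since $(S,I)$ and $(T,J)$ are by hypothesis objects of the source subcategory and the only constraints on a source morphism are~(\ref{eq4-1}), the triple $(h,u,\varphi)$ already is a morphism of the source subcategory, whence $\textbf{B}$ is full. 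Combining this with the preceding paragraph and functoriality yields the proposition for each listed pair simultaneously.

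I expect the only genuinely delicate point to be the object bookkeeping in the $0$-simple compact and countably compact group cases, where the target objects are described as ``$0$-simple (countably) compact topological inverse semigroups'' rather than as Brandt extensions: one must argue that $0$-simplicity together with (countable) compactness forces complete $0$-simplicity and a finite band, so that by the classical description of Brandt semigroups as the completely $0$-simple inverse semigroups (Clifford, Munn) the object is a topological Brandt semigroup $B_\lambda(G)$ with $\lambda$ finite and $G$ a (countably) compact topological group --- and it is here that Theorem~6 of~\cite{GutikPavlyk2005} (embedded in Proposition~\ref{proposition4-5c}) and the compact analogue in Proposition~\ref{proposition4-5b} do the work. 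All the remaining specializations are then routine transcriptions of the argument above.
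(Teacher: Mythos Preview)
Your approach is the paper's approach: the paper's entire proof of Proposition~\ref{proposition4-16} is the sentence ``Theorem~\ref{theorem4-15} implies:'', and you have correctly supplied the routine verification (restriction to full subcategories, object-level bookkeeping via Lemma~\ref{lemma4-5a} and Propositions~\ref{proposition4-5b}--\ref{proposition4-5c}, fullness and representativeness inherited from Theorem~\ref{theorem4-15}) that makes this implication explicit.

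One point of scope: your final paragraph is addressing a difficulty that does not belong to this proposition. Look again at the list --- the compact and countably compact group cases $\mathscr{CC\!T\!B}_{\operatorname{fin}}\mathscr{G}\to\mathscr{B}^*(\mathscr{CC\!T\!G})$ and $\mathscr{C\!T\!B}_{\operatorname{fin}}\mathscr{G}\to\mathscr{B}^*(\mathscr{C\!T\!G})$ are \emph{not} among the pairs claimed in Proposition~\ref{proposition4-16}; only the plain $\mathscr{T\!B}_{\operatorname{fin}}\mathscr{G}\to\mathscr{B}^*_{\operatorname{fin}}(\mathscr{T\!G})$ appears, and there the target objects are already described as topological inverse Brandt semigroups with finite bands, so representativeness is immediate. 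The ``delicate point'' you identify --- that a $0$-simple (countably) compact topological inverse semigroup must be shown to be a finite Brandt extension of a topological group --- is exactly the content of Proposition~\ref{proposition4-17}, which is stated separately and appeals to the structure theorem of~\cite{GutikRepovs2007} rather than to Theorem~\ref{theorem4-15} alone. So your concern is legitimate, just misplaced; for Proposition~\ref{proposition4-16} itself you can drop that paragraph entirely.
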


Therefore Propositions~\ref{proposition3-3} and
~\ref{proposition4-16} imply:

\begin{corollary}\label{corollary4-17}
The categories
$\mathscr{\mathscr{T\!B}_{\operatorname{fin}}\mathscr{SL}}$, $[$
resp.,
$\mathscr{\mathscr{CC\!T\!B}_{\operatorname{fin}}\mathscr{SL}}$,
$\mathscr{\mathscr{C\!T\!B}_{\operatorname{fin}}\mathscr{SL}}$
$]$, and $\mathscr{B}^*(\mathscr{T\!SL})$ $[$ resp.,
$\mathscr{B}^*(\mathscr{CC\!T\!SL})$,
$\mathscr{B}^*(\mathscr{C\!T\!SL})$ $]$ are isomorphic.
\end{corollary}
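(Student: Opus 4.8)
The plan is to show that the functor $\operatorname{\textbf{B}}$, restricted to each of the three indicated pairs of categories, is an \emph{isomorphism} of categories, i.e.\ a functor that is bijective on objects and bijective on each hom-set; the three assertions then follow at once. By Proposition~\ref{proposition4-16} we already know that $\operatorname{\textbf{B}}$ is a full representative functor from $\mathscr{T\!B}_{\operatorname{fin}}\mathscr{SL}$ (resp.\ $\mathscr{CC\!T\!B}_{\operatorname{fin}}\mathscr{SL}$, $\mathscr{C\!T\!B}_{\operatorname{fin}}\mathscr{SL}$) into $\mathscr{B}^*(\mathscr{T\!SL})$ (resp.\ $\mathscr{B}^*(\mathscr{CC\!T\!SL})$, $\mathscr{B}^*(\mathscr{C\!T\!SL})$), so the only points left to check are that the object map is actually a bijection and that the functor is faithful.

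For the object part, I would argue as follows. Surjectivity is immediate from representativeness together with Lemma~\ref{lemma4-5a}: every object of the target category is by definition a finite topological Brandt $\lambda^0$-extension of a topological semilattice $(S,\tau)$ with zero and identity, and Lemma~\ref{lemma4-5a} (resp.\ Propositions~\ref{proposition4-5b} and~\ref{proposition4-5c} in the compact and countably compact variants) forces its topology to be the base-$\mathscr{B}_B$ topology $\tau_B$ and forces the base semilattice to be compact (countably compact) whenever the extension is; hence it equals $\operatorname{\textbf{B}}(S,I_\lambda)$ on the nose. Injectivity is the topological analogue of the object part of Remark~\ref{remark3-4}: from $\operatorname{\textbf{B}}(S,I_\lambda)=\left(B^0_{\lambda}(S),\tau_B\right)$ one recovers $(S,\tau)$ as the local submonoid $eB^0_{\lambda}(S)e=S_{\alpha,\alpha}$ at a maximal non-zero idempotent $e=(\alpha,1_S,\alpha)$ and recovers $\lambda$ as the cardinality of the set of maximal non-zero idempotents; this reconstruction is canonical, so distinct objects have distinct images, modulo the identification of $(S,I)$ with $(S,J)$ for trivial $S$ that is built into both categories.

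For faithfulness, suppose $(h,u,\varphi)$ and $(f,v,\psi)$ are parallel morphisms $(S,I_{\lambda_1})\rightarrow(T,I_{\lambda_2})$ in $\mathscr{T\!B}_{\operatorname{fin}}\mathscr{SL}$ (or in the compact or countably compact variant) with $\operatorname{\textbf{B}}(h,u,\varphi)=\operatorname{\textbf{B}}(f,v,\psi)$. Forgetting topologies, $S$ and $T$ are semilattices with unity and zero, hence lie in $\mathfrak{B}$ (a semilattice is commutative, so all idempotents are central, and it contains no copy of the non-commutative semigroup $B_2$); and since $\mathtt{CHom}_0\subseteq\mathtt{Hom}_0$ and $\mathscr{H}^{\textit{top}}_1\subseteq\mathscr{H}_1$, both triples are genuine $\mathscr{BSL}$-morphisms and the underlying maps of $\operatorname{\textbf{B}}(h,u,\varphi)$ and $\operatorname{\textbf{B}}(f,v,\psi)$ coincide. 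Proposition~\ref{proposition3-3} then gives $h=f$, $u=v$ and $\varphi=\psi$, so $\operatorname{\textbf{B}}$ is faithful.

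Combining the two, $\operatorname{\textbf{B}}$ is bijective on objects, full, and faithful, hence induces a bijection on every hom-set; the assignment sending each topological Brandt extension to the pair $(S,I_\lambda)$ from which it arises and each continuous homomorphism to the unique triple $(h,u,\varphi)$ representing it is then an inverse functor (it automatically preserves identities and composition, since $\operatorname{\textbf{B}}$ does and is bijective). Therefore the three pairs of categories in the statement are isomorphic. I expect the only genuinely delicate point to be verifying that the object map is a true bijection rather than merely essentially surjective, which is exactly where the uniqueness clause of Lemma~\ref{lemma4-5a} and the matching of the compactness hypotheses via Propositions~\ref{proposition4-5b}--\ref{proposition4-5c} are needed; the rest is a transfer of Propositions~\ref{proposition3-3} and~\ref{proposition4-16} along the forgetful passage from topological semilattice morphisms to $\mathscr{BSL}$-morphisms.
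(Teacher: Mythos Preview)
Your proposal is correct and follows essentially the same route as the paper, which derives the corollary directly from Propositions~\ref{proposition3-3} and~\ref{proposition4-16}. You are in fact more explicit than the paper about the bijectivity of $\operatorname{\textbf{B}}$ on objects, invoking the uniqueness clauses of Lemma~\ref{lemma4-5a} and Propositions~\ref{proposition4-5b}--\ref{proposition4-5c} (the topological analogue of Remark~\ref{remark3-4}), which the paper leaves implicit.
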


Gutik and Repov\v{s} \cite{GutikRepovs2007} proved that every
countably compact $0$-simple topological inverse semigroup $S$
is
isomorphic to the
topological Brandt $\lambda^0$-extension
$\left(B^0_{\lambda}(G), \tau_{B}\right)$ of a topological group
$G$ in the class of topological inverse semigroups for some finite
cardinal $\lambda$. This implies
the following:

\begin{proposition}\label{proposition4-17}
$\textbf{B}$ is a full representative functor from
$\mathscr{CC\!T\!B}_{\operatorname{fin}}\mathscr{G}$ $[$resp.,
$\mathscr{C\!T\!B}_{\operatorname{fin}}\mathscr{G}$\ \!$]$ into
$\mathscr{B}^*(\mathscr{CC\!T\!G})$ $[$resp.,
$\mathscr{B}^*(\mathscr{C\!T\!G})$ $]$.
\end{proposition}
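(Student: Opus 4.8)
The plan is to obtain the statement by combining Theorem~\ref{theorem4-15}, Propositions~\ref{proposition4-5b} and~\ref{proposition4-5c}, and the cited structure theorem of Gutik and Repov\v{s} for countably compact $0$-simple topological inverse semigroups. Since $\mathscr{CC\!T\!B}_{\operatorname{fin}}\mathscr{G}$ and $\mathscr{C\!T\!B}_{\operatorname{fin}}\mathscr{G}$ are subcategories of $\mathscr{T\!B}_{\operatorname{fin}}$, on which $\textbf{B}$ is already a full functor preserving identities and compositions by Theorem~\ref{theorem4-15}, it is enough to check: (a) that $\textbf{B}$ carries the objects and morphisms of these subcategories into $\mathscr{B}^*(\mathscr{CC\!T\!G})$, respectively $\mathscr{B}^*(\mathscr{C\!T\!G})$; (b) that it remains full between them; and (c) that it is representative.

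For (a), fix $(G,I_\lambda)\in\operatorname{\textbf{Ob}}(\mathscr{CC\!T\!B}_{\operatorname{fin}}\mathscr{G})$, so that $G$ is a countably compact topological group and $\lambda=|I_\lambda|$ is finite. Then $G^0$, with $0_S$ an isolated point, is a countably compact topological inverse monoid with zero and $\textbf{B}(G,I_\lambda)=B^0_\lambda(G^0)$ carries the topology of Lemma~\ref{lemma4-5a}. By Proposition~\ref{proposition4-5c}, $B^0_\lambda(G^0)$ is a countably compact topological inverse semigroup in the class of topological inverse semigroups; being a Brandt semigroup it is completely $0$-simple, in particular $0$-simple, so $\textbf{B}(G,I_\lambda)\in\operatorname{\textbf{Ob}}(\mathscr{B}^*(\mathscr{CC\!T\!G}))$. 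The compact case is identical, using Proposition~\ref{proposition4-5b}. On morphisms, Theorem~\ref{theorem4-15} already tells us that $\textbf{B}(h,u,\varphi)$ is a continuous homomorphism of the corresponding topological inverse Brandt semigroups, so $\textbf{B}$ restricts to a functor into $\mathscr{B}^*(\mathscr{CC\!T\!G})$, respectively $\mathscr{B}^*(\mathscr{C\!T\!G})$.

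For (b), let $\alpha\colon\textbf{B}(G,I_{\lambda_1})\to\textbf{B}(G',I_{\lambda_2})$ be a non-trivial continuous homomorphism. Apply Theorem~\ref{theorem4-6} with $S=G^0$ and $T=G'^0$: the monoid $G'^0$ has $1_{G'}$ as its only non-zero idempotent, which lies in its center, so conditions $(i)$ and $(ii)$ of that theorem hold (and for $\lambda_1=1$ the assertion is trivial since $B^0_1(G^0)=G^0$). Hence $\alpha=\textbf{B}(h,u,\varphi)$ for a continuous homomorphism $h$ with $(0_S)h=0_T$, a one-to-one map $\varphi\colon I_{\lambda_1}\to I_{\lambda_2}$, and a map $u\colon I_{\lambda_1}\to H_e$ with $e$ a non-zero idempotent of $G'^0$. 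Since $\alpha$ is non-trivial, $h$ is non-trivial, so the ideal $I_h$ of the group-with-zero $G^0$ --- necessarily $\{0_S\}$ or $G^0$ --- equals $\{0_S\}$; thus $h$ restricts to a continuous homomorphism $G\to G'$ of topological groups, and necessarily $e=1_{G'}$, $H_e=G'$. Therefore $(h,u,\varphi)$ is a $\mathscr{CC\!T\!B}_{\operatorname{fin}}\mathscr{G}$-morphism with $\textbf{B}(h,u,\varphi)=\alpha$; the compact case is the same.

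For (c), the cited theorem of Gutik and Repov\v{s}~\cite{GutikRepovs2007} states that every $0$-simple countably compact topological inverse semigroup $S$ is isomorphic to the topological Brandt $\lambda^0$-extension $(B^0_\lambda(G),\tau_B)$ of a topological group $G$ in the class of topological inverse semigroups for some finite cardinal $\lambda$, i.e.\ to $\textbf{B}(G,I_\lambda)$; since this extension is countably compact, Proposition~\ref{proposition4-5c} forces $G^0$, and hence $G$, to be countably compact, so $(G,I_\lambda)\in\operatorname{\textbf{Ob}}(\mathscr{CC\!T\!B}_{\operatorname{fin}}\mathscr{G})$. A compact $0$-simple topological inverse semigroup is in particular countably compact, hence of this form with $\lambda$ finite by the preceding; compactness of $S$ together with Proposition~\ref{proposition4-5b} then forces $G^0$, and thus $G$, to be compact, so $(G,I_\lambda)\in\operatorname{\textbf{Ob}}(\mathscr{C\!T\!B}_{\operatorname{fin}}\mathscr{G})$. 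The only non-formal input is this structure theorem of Gutik and Repov\v{s}; everything else is routine, the most delicate point being the reduction in Theorem~\ref{theorem4-6} of $e$ and $H_e$ to $1_{G'}$ and $G'$, which rests on the fact that a group contains no idempotent besides its unit.
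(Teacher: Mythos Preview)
Your proof is correct and follows essentially the same approach as the paper: the paper states Proposition~\ref{proposition4-17} immediately after recalling the Gutik--Repov\v{s} structure theorem for countably compact $0$-simple topological inverse semigroups, with the implicit understanding that it, together with the machinery already developed (Theorem~\ref{theorem4-15}, Theorem~\ref{theorem4-6}, and Propositions~\ref{proposition4-5b}--\ref{proposition4-5c}), yields the result. You have simply written out explicitly the verification of (a) the functor lands in the right target, (b) fullness, and (c) representativity, with the last being precisely where the structure theorem enters; your reduction in the compact case via ``compact $\Rightarrow$ countably compact'' plus Proposition~\ref{proposition4-5b} is exactly the intended argument.
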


Comfort and Ross \cite{ComfortRoss1966} proved that the
Stone-\v{C}ech compactification of a pseudocompact topological
group is a topological group. Therefore the functor of the
Stone-\v{C}ech compactification \textbf{$\beta$} from the category
of pseudocompact topological groups back into itself determines a
monad.
Similarly, since the Stone-\v{C}ech compactification of a
countably compact $0$-simple topological inverse semigroup is a
compact $0$-simple topological inverse
semigroup~\cite[Theorem~3]{GutikRepovs2007}, we get the following:

\begin{corollary}\label{corollary4-18}
The functor of the Stone-\v{C}ech compactification $\beta\colon
\mathscr{B}^*(\mathscr{CC\!T\!G}) \rightarrow
\mathscr{B}^*(\mathscr{CC\!T\!G})$ determines a monad.
\end{corollary}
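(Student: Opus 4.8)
The plan is to reproduce, one level up, the argument sketched just above for pseudocompact topological groups: present the Stone--\v{C}ech compactification as the reflector of a reflective subcategory, and then invoke the classical fact that an adjunction (hence a reflection) carries a canonical monad structure. Write $\mathscr{C}=\mathscr{B}^*(\mathscr{CC\!T\!G})$, and let $\mathscr{D}$ be the full subcategory whose objects are the $0$-simple \emph{compact} topological inverse semigroups, i.e. $\operatorname{\textbf{Ob}}(\mathscr{D})=\operatorname{\textbf{Ob}}(\mathscr{B}^*(\mathscr{C\!T\!G}))$; a compact space being countably compact, $\mathscr{D}$ is a full subcategory of $\mathscr{C}$, and by Theorem~3 of \cite{GutikRepovs2007} the assignment $S\mapsto\beta S$ sends $\operatorname{\textbf{Ob}}(\mathscr{C})$ into $\operatorname{\textbf{Ob}}(\mathscr{D})$.

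First I would check that $\beta$ is a functor $\mathscr{C}\to\mathscr{D}$ (and then, composed with the inclusion $\iota\colon\mathscr{D}\hookrightarrow\mathscr{C}$, an endofunctor of $\mathscr{C}$). On objects this is Theorem~3 of \cite{GutikRepovs2007}, which moreover supplies on $\beta S$ a jointly continuous topological-inverse-semigroup structure extending that of $S$. For a morphism $f\colon S\to T$ of $\mathscr{C}$ the Stone--\v{C}ech extension $\beta f\colon\beta S\to\beta T$ is continuous by the universal property, is a homomorphism because it agrees with the homomorphism $f$ on the dense set $S$ while the multiplications on $\beta S$ and $\beta T$ are continuous, and is non-trivial because $\beta f|_{S}=f$ is non-trivial; preservation of identities and of composition follows the same way from density and uniqueness of continuous extensions.

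Next I would verify that $\beta\colon\mathscr{C}\to\mathscr{D}$ is left adjoint to $\iota$. The unit at $S$ is the canonical dense embedding $\eta_S\colon S\to\beta S$, which is a non-trivial continuous homomorphism (it is injective and, $S$ being $0$-simple, $|S|\geqslant 2$, so $\eta_S$ is not constant, and the multiplication on $\beta S$ restricts to that of $S$). The universal property of the Stone--\v{C}ech compactification gives, for each $K\in\operatorname{\textbf{Ob}}(\mathscr{D})$, a natural bijection between continuous maps $S\to K$ and continuous maps $\beta S\to K$; the density argument of the previous paragraph shows this bijection restricts to non-trivial continuous homomorphisms on both sides, yielding $\operatorname{Hom}_{\mathscr{D}}(\beta S,K)\cong\operatorname{Hom}_{\mathscr{C}}(S,\iota K)$ naturally. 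Since $\iota$ is fully faithful this is a reflection, with counit $\epsilon_K\colon\beta K\to K$ the inverse of $\eta_K$ (a homeomorphism, and hence an isomorphism of $\mathscr{D}$, because $K$ is already compact).

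Finally I would conclude by the standard theorem that an adjunction $F\dashv U$ produces a monad $(UF,\eta,U\epsilon F)$ on the domain of $F$. Here this is the monad $(\iota\beta,\eta,\mu)=(\beta,\eta,\mu)$ on $\mathscr{C}$, where $\eta$ is the family of embeddings above and $\mu_S\colon\beta\beta S\to\beta S$ equals $\epsilon_{\beta S}$, i.e. the inverse of the isomorphism $\eta_{\beta S}\colon\beta S\to\beta\beta S$; the unit and associativity identities then hold automatically, and $\mu$ being an isomorphism this is an idempotent monad. I expect the only substantive input to be the cited Theorem~3 of \cite{GutikRepovs2007} (stability of the class of objects under $\beta$, together with the topological-inverse-semigroup structure on $\beta S$); the remaining obstacle is purely bookkeeping, namely checking that the Stone--\v{C}ech universal property, stated for arbitrary continuous maps into compact Hausdorff spaces, restricts cleanly to the categories $\mathscr{C}$ and $\mathscr{D}$ whose morphisms are the \emph{non-trivial} continuous homomorphisms.
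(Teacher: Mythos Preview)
Your proposal is correct and follows exactly the line the paper intends: the paper gives no explicit proof of this corollary beyond the sentence preceding it, which says that the Stone--\v{C}ech compactification of a countably compact $0$-simple topological inverse semigroup is again such (compact) by \cite[Theorem~3]{GutikRepovs2007}, and that the monad structure then follows ``similarly'' to the Comfort--Ross situation for pseudocompact groups. Your write-up simply unpacks this ``similarly'' --- exhibiting $\beta$ as the reflector onto the full subcategory $\mathscr{B}^*(\mathscr{C\!T\!G})$ and invoking the adjunction-to-monad construction --- which is precisely the intended mechanism; your care with the non-triviality requirement on morphisms is a detail the paper leaves implicit.
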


%%%%%%%%%%%%%%%%%%%%%%%%%%%%%%%%%%%%%%%%%%%%%%%%%%%%%%%%%%%%

\section*{Acknowledgements}

This research was supported
by the Slovenian Research Agency grants
P1-0292-0101-04,  J1-9643-0101 and BI-UA/07-08/001. We acknowledge the Editor and the 
referees
for their comments and suggestions.
We also thank
Dikran Dikranjan for his comments.

%%%%%%%%%%%%%%%%%%%%%%%%%%%%%%%%%%%%%%%%%%%%%%%%%%%%%%%%%%%%

\end{document}